\newcommand{\tr}{\textup{Tr}}
\newcommand{\ra}{\rangle}
\newcommand{\la}{\langle}
\newcommand{\F}{{\mathbb F}}
\newcommand{\Z}{{\mathbb Z}}
\newcommand{\cP}{{\mathcal P}}
\newcommand{\cM}{{\mathcal M}}
\newcommand{\cQ}{{\mathcal Q}}
\newcommand{\bL}{{\mathbf L}}
\newcommand{\bO}{{\mathbf O}}
\newcommand{\bS}{{\mathbf S}}
\newcommand{\bU}{{\mathbf U}}
\newcommand{\f}{{\mathbf f}}
\newcommand{\GL}{\textup{GL}}
\newcommand{\SL}{\textup{SL}}
\newcommand{\SU}{\textup{SU}}
\newcommand{\GO}{\textup{GO}}
\newcommand{\PGL}{\textup{PGL}}
\newcommand{\PSL}{\textup{PSL}}
\newcommand{\PSU}{\textup{PSU}}
\newcommand{\PSp}{\textup{PSp}}
\newcommand{\PGU}{\textup{PGU}}
\newcommand{\GamL}{\Gamma\textup{L}}
\newcommand{\GamO}{\Gamma\textup{O}}
\newcommand{\GamU}{\Gamma\textup{U}}
\newcommand{\GamSp}{\Gamma\textup{Sp}}
\newcommand{\PGamSp}{\textup{P}\Gamma\textup{Sp}}
\newcommand{\PGamO}{\textup{P}\Gamma\textup{O}}
\newcommand{\POmeg}{\textup{P}\Omega}
\newcommand{\PGamU}{\textup{P}\Gamma\textup{U}}
\newcommand{\Gal}{\textup{Gal}}
\newcommand{\Aut}{\textup{Aut}}
\newcommand{\Sp}{\textup{Sp}}
\newtheorem{thm}{Theorem}
\newtheorem{lemma}[thm]{Lemma}
\newtheorem{proposition}[thm]{Proposition}
\newtheorem{example}[thm]{Example}
\numberwithin{equation}{section}
\numberwithin{thm}{section}
\numberwithin{table}{section}
\newtheorem{remark}[thm]{Remark}
\renewcommand{\paragraph}{\roman{paragraph}}
\tikzstyle arrowstyle=[scale=1]
\tikzstyle directed=[postaction={decorate,decoration={markings, mark=at position .65 with {\arrow[arrowstyle]{stealth}}}}]
\tikzstyle reverse directed=[postaction={decorate,decoration={markings, mark=at position .65 with {\arrowreversed[arrowstyle]{stealth};}}}]
\begin{document}
	
	\title{On $m$-ovoids of finite classical polar spaces with an irreducible transitive automorphism group}
	
	\author{Tao Feng$^{\text{a}}$, Weicong Li$^{\text{b}}$, Ran Tao$^{\text{c,d,}}$\thanks{Corresponding author}\\
		\footnotesize $^{\text{a}}$ School of Mathematical Sciences, Zhejiang University, Hangzhou 310027, China;\\
		\footnotesize $^{\text{b}}$ Department of Mathematics and and National Center for Applied Mathematics Shenzhen,\\
		\footnotesize Southern University of Science and Technology, ShenZhen 518055, China;\\
		\footnotesize $^{\text{c}}$ Key Laboratory of Cryptologic Technology and Information Security, Ministry of Education,\\
		\footnotesize Shandong University, Qingdao 266237, China;\\
		\footnotesize $^{\text{d}}$ School of Cyber Science and Technology, Shandong University, Qingdao 266237, China\\
		\footnotesize  Email: tfeng@zju.edu.cn, liwc3@sustech.edu.cn, rtao@sdu.edu.cn}
	
	\date{}
	\maketitle
	
	\begin{abstract}
		In this paper, we classify the  $m$-ovoids of finite classical polar spaces that admit a transitive automorphism group acting irreducibly on the ambient vector space.  In particular, we obtain  several new infinite families of transitive $m$-ovoids.
		
		\medskip
		\noindent{{\it Keywords\/}: transitive $m$-ovoids, irreducible action, finite classical polar spaces, primitive divisor.}
		
		\smallskip
		
		\noindent {{\it MSC(2020)\/}: 51E20, 05B25, 51A50}
		
	\end{abstract}
\section{Introduction}

The study of finite geometric structures that  exhibit certain regularity and satisfy certain transitivity conditions has been a main theme in finite geometry and has a long history. The celebrated Ostrom-Wagner theorem which asserts that a finite projective plane having a $2$-transitive collineation group must be classical dates back to 1959, cf. \cite{OstromProjective1959}. The classification of finite simple groups has greatly stimulated the study in this direction, and Aschbacher's Theorem \cite{AschbacherMaximal} on subgroup structures of finite classical groups provides a guideline for the study of symmetries of finite geometric structures.

In this paper, we aim to determine the $m$-ovoids of finite classical polar spaces that admit a transitive automorphism group acting irreducibly on the ambient space. An $m$-ovoid of a finite classical polar space $\cP$ is a point set of $\cP$ that meets each maximal totally isotropic or singular subspace in exactly $m$ points. A $1$-ovoid is simply an ovoid.
The concept of $m$-ovoids was first proposed by Shult and Thas in \cite{Shultmsystem}. It has been a very active research topic in recent years due to its close connection with graphs, codes and other geometric structures, cf. \cite{BambergTightsets2007,BambergTightquadrangles,cossidente2008m,Cossidente2018intri,FengOnm2020}. In \cite{BambergclassificationTransi}, Bamberg and Penttila classified transitive ovoids of finite classical polar spaces with an insoluble automorphism group. Recently, the first and second author completely classified the transitive ovoids of finite Hermitian polar spaces in \cite{FengLi}.

Our classification is based on the classification of subgroups of finite classical groups whose orders are divisible by certain primitive divisors in \cite{bamberg2008overgroups}, which in turn is based on \cite{Guralnick1999Linear}. One purpose of \cite{bamberg2008overgroups} is the application to the classification of geometric objects that satisfy certain transitivity condition, and this paper is a contribution in this direction.

This paper is organized as follows. In Section \ref{sec:Preli}, we present some preliminary results. In Section \ref{sec:MainResults}, we first list some examples of transitive $m$-ovoids and then present our main classification results, i.e., Theorems \ref{thm_PGamUTransi}-\ref{thm_PGamSpTransi}. In Section \ref{sec:boundsection}, we study $m$-ovoids of a particular form that admit a large automorphism group of extension field type by an exponential sum approach. We shall derive improved bounds on the parameters by Kloosterman sum estimations and obtain new infinite families. In Section  \ref{sec:mainresults} and Section \ref{sec_extField}, we prove our main results by going over the list of subgroups that has certain primitive prime divisors in \cite{bamberg2008overgroups}. Section \ref{sec_extField} is devoted to the extension field case and makes use of the bounds in Section \ref{sec:boundsection}, while Section \ref{sec:mainresults} handles the remaining cases. In Section \ref{sec:Conclud}, we conclude the paper with some open problems.

\section{Preliminaries}
\label{sec:Preli}
\subsection{Finite classical groups}
\label{subsec:FSG}
Let $\F_{q}$ be the finite field with $q$ elements, where $q=p^f$ with $p$ prime. Let $V$ be a vector space of dimension $d$ over $\F_q$ equipped with a reflexive sesquilinear form or quadratic form $\kappa$, where $\kappa$ is either nondegenerate or constantly zero. We use $\GamL(V)$ for the  semilinear transformation group of $V$ and use $\GL(V)$ for the linear transformation group. We will also write $\GamL_d(q)$, $\GL_d(q)$ instead if the context is clear.
\begin{table}[!h]\footnotesize\tabcolsep 16pt
	\centering
	\caption{The forms $\kappa$ on $V$}
\centering
	\begin{tabular}{cc}
		\toprule
		case $\bL$ & $\kappa$ is identically 0\\
		case $\bS$ & $\kappa=\f$, a nondegenerate symplectic form\\ 
		case $\bO^{\epsilon}$ & $\kappa=Q$, a nondegenerate quadratic form\\
		case $\bU$ & $\kappa=\f$, a nondegenerate unitary form\\
		\bottomrule	
	\end{tabular}
\end{table}
We take the same notation for classical groups as in \cite{kleidman1990subgroup}, except for that we regard the ambient space $V$ as a vector space over $\F_q$ in all cases. For instance, if $\kappa$ is a sesquilinear form, we define the $\kappa$-semisimilarity group as
\begin{equation}\label{eqn_Gam}
	 \Gamma(V,\kappa):=\{g\in\GamL(V):\,\kappa(xg,yg)=\lambda(g)\kappa(x,y)^{\sigma(g)}\textup{ for all }x\in V\},
\end{equation}
where $\sigma(g)\in\Gal(\F_{q}/\F_p)$, $\lambda(g)\in\F_{q}^{*}$. In the case where $\kappa$ is a quadratic form the group $\Gamma(V,\kappa)$ is defined similarly. We define the similarity group $\Delta(V,\kappa)$ as the kernel of $\sigma$, the isometry group  $I(V,\kappa)$ as $\ker(\sigma)\cap\ker(\lambda)$, and define $S(V,\kappa)$ to be the subgroup of $I(V,\kappa)$ consisting of elements of determinant $1$. In the cases $\bL,\bU,\bS$, we define $\Omega(V,\kappa)$ to be $S(V,\kappa)$; in the case $\bO$, we define $\Omega(V,\kappa)$ to be a certain subgroup of index $2$ in $S(V,\kappa)$. For more details about the group $\Omega(V,\kappa)$, please refer to  \cite[Section 2.5]{kleidman1990subgroup}.

We use the standard group theoretical notation as in \cite{bray2013maximal}. Let $A$ and $B$ be two groups. We use $A\times B$ to denote the direct product of $A$ and $B$, $A:B$ to denote a split extension, $A^{\cdot}B$ to denote a non-split extension (or possibly one in which $A$ is trivial), and use $A.B$ when we do not know or do not wish to specify whether the extension splits. For a finite group $G$, we use $G^{(\infty)}$ for the terminating member of the derived series of $G$. For a finite nonabelian simple group $L$, we have $|\Aut(L)|=|\textup{Out}(L)|\cdot|L|$, and the order of the outer automorphism group $\textup{Out}(L)$ is available from  Table 2.1.C, 2.1.D, 5.1.A, 5.1.B and 5.1.C of \cite{kleidman1990subgroup}.

\subsection{Intriguing sets}

Let $V$ be a vector space of dimension $d$ over $\F_q$ equipped with a nondegenerate reflexive sesquilinear form or quadratic form $\kappa$, and let $\cP$ be the associated polar space. A point of $\cP$ is defined as a $1$-dimensional totally isotropic/singular subspace of $V$. A maximal totally isotropic/singular subspace of $\cP$ is called a \textit{generator} of $\cP$. All generators have the same dimension $r$, which is called the \textit{rank} of $\cP$. A generator has $\frac{q^r-1}{q-1}$ points. An \textit{ovoid}  of $\cP$ is a set of points which intersects each generator in exactly one point. We denote by $\theta_r$ for the size of a putative ovoid, which we call the ovoid number. It holds that $|\cP|=\theta_r\cdot \frac{q^r-1}{q-1}$. We list the ranks and the ovoid numbers of the five classical polar spaces in Table \ref{tab_valuede}. In the last column of the table, we list the smallest integer $e$ such that $\theta_r|q^{e}-1$.
\begin{table}[!h]\footnotesize\tabcolsep 16pt
	\centering
	\caption{The parameters $r$, $\theta_{r}$ and the smallest $e$ such that $\theta_r|q^{e}-1$}	\label{tab_valuede}
	\begin{tabular}{ccccccc}
		\toprule
		& $d$ & $f$& polar space $\cP$ & rank $r$ & ovoid number $\theta_{r}$ & $e$ \\
		\hline
		$\bS$ & even&-& $W(d-1,q)$ & $d/2$ & $q^{d/2}+1$ &$d$\\
		\hline
		\multirow{3}*{$\bO$}
		&even &-& $Q^+(d-1,q)$ & $d/2$ & $q^{d/2-1}+1$ &$d-2$\\
		& even &-& $Q^-(d-1,q)$ & $d/2-1$ & $q^{d/2}+1$ &$d$\\
		& odd &-& $Q(d-1,q)$ & $(d-1)/2$ & $q^{(d-1)/2}+1$ & $d-1$\\
		\hline
		\multirow{2}*{$\bU$}
		& odd &even &$H(d-1,q)$ & $(d-1)/2$ & $q^{d/2}+1$ &$d$\\
		& even &even& $H(d-1,q)$ & $d/2$ & $q^{(d-1)/2}+1$ &$d-1$\\
		\bottomrule
	\end{tabular}
\end{table}	

Suppose that $r\ge 2$, and let $\cM$ be a nonempty set of  $\cP$. It is an \textit{intriguing set} if there exist some constants $h_1,h_2$ such that $|P^{\perp}\cap\cM|=h_{1}$ or $h_{2}$ according as $P\in\cM$ or not, where $P$ ranges over the points of $\cP$, cf. \cite{BambergTightsets2007}. An intriguing set $\cM$ is proper if $\cM\ne\cP$. There are two types of intriguing sets:
\begin{enumerate}
	\item[(1)]$i$-tight sets: $|\cM|=i\cdot\frac{q^r-1}{q-1}$, $h_1=q^{r-1}+i\cdot\frac{q^{r-1}-1}{q-1}$, $h_2=i\cdot\frac{q^{r-1}-1}{q-1}$, and
	\item[(2)]$m$-ovoids: $|\cM|=m\theta_r$, $h_1=(m-1)\theta_{r-1}+1$, $h_2=m\theta_{r-1}$.
\end{enumerate}
We refer to \cite{BambergTightsets2007} for more properties of intriguing sets. In particular, if $H$ is a subgroup of $\Gamma(V,\kappa)$ that has exactly two orbits $O_1,\,O_2$ on the points of $\cP$, then both $O_1$ and $O_2$ are intriguing sets of the same type.

\begin{remark}\label{rem_2Fact}
We shall use the following facts from \cite{BambergTightsets2007}.
\begin{enumerate}
\item[(1)] Suppose that $\cM_1,\cM_2$ are $m_1$- and $m_2$-ovoids in a classical polar space $\cP$ such that $\cM_1\subseteq\cM_2$. It is routine to check that $\cM_2\setminus \cM_1$ is a $(m_2-m_1)$-ovoid, cf. \cite[Section 2]{BambergTightsets2007}.
\item[(2)] Suppose that both $q$ and $d$ are even, and let $\cP=Q^-(d-1,q)$ be a polar space with ambient space $V$. The associated bilinear form is alternating and nondegenerate, so it defines a symplectic polar space $W(d-1,q)$. By \cite[Theorem 11]{BambergTightsets2007}, an $m$-ovoid of $Q^{-}(d-1,q)$ is also an $m$-ovoid in $W(d-1,q)$.
\end{enumerate}
\end{remark}

\subsection{The field reduction}
Let $b$ be a  divisor of an integer $d$ with $b>1$. Suppose that $V'$ is a $d/b$-dimensional vector space over the finite field $\F_{q^b}$, equipped with a non-degenerate sesquilinear or quadratic form $\kappa'$. Let $\cP'$ be the associated polar space. Let $\tr_{q^b/q}$ be the relative trace function from the field $\F_{q^b}$ to $\F_{q}$.  We regard $V'$ as a $d$-dimensional vector space $V$ over $\F_q$. By composing $\kappa'$ and the trace function, we obtain a new form $\kappa$ on $V$. Let $\cP$ be the polar space defined by $\kappa$.  We define
\begin{equation}\label{eqn_GamV'def}
\Gamma^{\#}(V',\kappa'):=\{g\in\Gamma(V',\kappa')|\,\lambda(g)\in\F_{q}^{*}\},
\end{equation}
which is a subgroup of $\Gamma(V,\kappa)$. In Table \ref{tab_extfieldP'gt0}, we list all the cases that can be obtained by iteratively composing the field reductions in \cite[Table 4.3A]{kleidman1990subgroup}. The first half are those cases where $\cP$ and $\cP'$ are of the same type, and the second half are the cases where they are of different types. The table also lists the sizes of $\cP$ and $\cP'$ for reference.
For more details, please refer to \cite{gill2006polar}.

\begin{table}[!ht]
	\centering
	\caption{The field reduction}
	\label{tab_extfieldP'gt0}
	\scalebox{0.7}{
		\begin{tabular}{ccccccc}
			\toprule
			Case & $\cP'$ & $|\cP'|$ & $\cP$ & $|\cP|$ &$\kappa$& Condition\\ \hline
			1 &$W(d/b-1,q^b)$ & $\frac{q^d-1}{q^b-1}$ & $W(d-1,q)$ & $\frac{q^d-1}{q-1}$ & $\tr_{q^b/q}\circ\kappa'$&$d/b$ even\\
			2 & $Q^{+}(d/b-1,q^b)$ & $(q^{d/2-b}+1)\frac{q^{d/2}-1}{q^b-1}$ & $Q^{+}(d-1,q)$ & $(q^{d/2-1}+1)\frac{q^{d/2}-1}{q-1}$ & $\tr_{q^b/q}\circ\kappa'$& $d/b$ even \\
			3 & $Q^{-}(d/b-1,q^b)$ & $(q^{d/2}+1)\frac{q^{d/2-b}-1}{q^b-1}$ & $Q^{-}(d-1,q)$ & $(q^{d/2}+1)\frac{q^{d/2-1}-1}{q-1}$ & $\tr_{q^b/q}\circ\kappa'$& $d/b$ even \\
			4 & $Q(d/b-1,q^b)$ & $\frac{q^{d-b}-1}{q^b-1}$ & $Q(d-1,q)$ & $\frac{q^{d-1}-1}{q-1}$ & $\tr_{q^b/q}\circ\kappa'$& $dq$ odd \\
			5 & $H(d/b-1,q^b)$ & $(q^{d/2}+1)\frac{q^{(d-b)/2}-1}{q^b-1}$ & $H(d-1,q)$ & $(q^{d/2}+1)\frac{q^{(d-1)/2}-1}{q-1}$ & $\tr_{q^b/q}\circ\kappa'$& $d$ odd \\
			6 & $H(d/b-1,q^b)$ & $(q^{(d-b)/2}+1)\frac{q^{d/2}-1}{q^b-1}$ & $H(d-1,q)$ & $(q^{(d-1)/2}+1)\frac{q^{d/2}-1}{q-1}$ & $\tr_{q^b/q}\circ\kappa'$& $d/b$ even, $b$ odd \\
			\hline
			7 & $H(d/b-1,q^b)$ & $(q^{d/2}+1)\frac{q^{(d-b)/2}-1}{q^b-1}$ & $W(d-1,q)$ & $\frac{q^{d}-1}{q-1}$ & $\tr_{q^b/q}\circ\lambda\kappa'$&  $d/b$ odd, \\
			&&&&&&$\lambda\in\F_{q^b}$ s.t. $\lambda^{q^{b/2}}+\lambda=0$\\
			8 & $H(d/b-1,q^b)$ & $(q^{(d-b)/2}+1)\frac{q^{d/2}-1}{q^b-1}$ & $W(d-1,q)$ & $\frac{q^{d}-1}{q-1}$ & $\tr_{q^b/q}\circ\lambda\kappa'$&$d/b$ even,  \\
			&&&&&&$\lambda\in\F_{q^b}$ s.t.$\lambda^{q^{b/2}}+\lambda=0$\\
			9 & $H(d/b-1,q^b)$ & $(q^{d/2}+1)\frac{q^{(d-b)/2}-1}{q^b-1}$ & $Q^{-}(d-1,q)$ & $(q^{d/2}+1)\frac{q^{d/2-1}-1}{q-1}$ & $\tr_{q^{b/2}/q}\circ\kappa'(-,-)$&$d/b$ odd, $b$ even \\
			10& $H(d/b-1,q^b)$ & $(q^{(d-b)/2}+1)\frac{q^{d/2}-1}{q^b-1}$ & $Q^{+}(d-1,q)$ & $(q^{d/2-1}+1)\frac{q^{d/2}-1}{q-1}$ & $\tr_{q^{b/2}/q}\circ\kappa'(-,-)$&$d/b$ even, $b$ even  \\
			11 & $Q(d/b-1,q^b)$ & $\frac{q^{d-b}-1}{q^b-1}$ & $Q^{+}(d-1,q)$ & $(q^{d/2-1}+1)\frac{q^{d/2}-1}{q-1}$ & $\tr_{q^b/q}\circ\lambda\kappa'$&$qd/b$ odd, $\lambda\in\F_{q^b}^{*}$\\
			12 & $Q(d/b-1,q^b)$ & $\frac{q^{d-b}-1}{q^b-1}$ & $Q^{-}(d-1,q)$ & $(q^{d/2}+1)\frac{q^{d/2-1}-1}{q-1}$ & $\tr_{q^b/q}\circ\lambda\kappa'$&$qd/b$ odd, $\lambda\in\F_{q^b}^{*}$ \\
			\bottomrule
	\end{tabular}}
\end{table}

\subsection{The fully deleted modules of $A_n$}\label{subsec_FDM}
Suppose that $p$ is a prime and $n\geq 5$. The alternating group $A_{n}$ acts on the vector space $\F_{p}^{n}$ by permuting the coordinates. Set
\begin{equation}\label{eqn_UW}
	U=\left\{(a_{1},\ldots,a_{n}):\,\sum_{i=1}^{n}a_{i}=0\right\},\;
	W=\{(a,\ldots,a):\,a\in\F_{p}\},
\end{equation}
and define $V:=U/(U\cap W)$. The dimension of $V$ is $n-2$ or $n-1$ according as $p$ divides $n$ or not. Both $U$ and $W$ are $A_n$-invariant, and so there is an induced action of $A_n$ on $V$. We call $V$ the \textit{fully deleted permutation module} of $A_n$. The action of $A_n$ on $V$ gives an embedding of $A_n$ as a  class $\mathscr{S}$ subgroup of $\Gamma(V,\kappa)$, cf. \cite[Definition 2.1.3]{bray2013maximal} , and it preserves the following symmetric bilinear form on $U$:
\begin{equation}
	\label{eq_BAn}
	B((a_{1},\ldots,a_{n}),(b_{1},\ldots,b_{n}))=\sum_{i=1}^{n}a_{i}b_{i}.
\end{equation}
It induces a nondegenerate symmetric bilinear form on $V$, which we also write as $B$ by abuse of notation.  For $p$ odd, the form $B$ on $V$ yields a nondegenerate $A_n$-invariant quadratic form $Q(a)=2^{-1}B(a,a)$, where $a\in V$. This leads to the following group embeddings:
	\begin{align*}
	A_{2m+1}&\leq\left\{\begin{array}{llll}
		\Omega_{2m}^{+}(p), & \textup{ if } p\nmid 2m+1,\, (-1)^m\cdot n   \text{ is a square of $\F_p$};\\
		\Omega_{2m}^{-}(p), & \textup{ if } p\nmid 2m+1,\, (-1)^m\cdot n  \text{ is a nonsquare of $\F_p$};\\
		\Omega_{2m-1}(p), & \textup{ if } p\mid 2m+1;\\
	\end{array}\right.\\
	A_{2m+2}&\leq\left\{\begin{array}{llll}
		\Omega_{2m+1}(p), & \textup{ if } p\nmid 2m+2;\\
		\Omega_{2m}^{+}(p), & \textup{ if } p\mid 2m+2,  \text{ $m$ is odd  or $m$ is even},  p \equiv 1 \text{(mod 4)}; \\
		\Omega_{2m}^{-}(p), & \textup{ if } p\mid 2m+2, \text{ $m$ is even}, p \equiv 3 \text{(mod 4)}. \\
	\end{array}\right.
\end{align*}

Now let $p=2$ and define $Q:\,U\rightarrow\F_{2}$ by
\begin{align}
	\label{eq_QAn}
	Q(a)=\begin{cases}1, & \textup{ if }\omega(a)\equiv 2(\textup{mod }4),\\
		0, & \textup{ if }\omega(a)\equiv 0(\textup{mod }4),                             \end{cases}
\end{align}
where $\omega(a)=\#\{i:\,a_i\ne 0\}$.  The form $Q$ is $A_{n}$-invariant and its associated bilinear form is $B$. If $n\not\equiv2\pmod{4}$, then $Q$ induces a nondegenerate quadratic form on $V$ which we again write as $Q$. If $n\equiv 2\pmod{4}$, then $Q$ is degenerate and there is no nondegenerate $A_n$-invariant quadratic form on $V$. By \cite[p.187]{kleidman1990subgroup}, this yields the following group embeddings
\begin{equation}
	\label{eqn_A2leq}
	A_{2m+2}\leq\left\{\begin{array}{llll}
		\Omega_{2m}^{+}(2), & \textup{ if } m\equiv 3(\textup{mod }4),\\
		\Omega_{2m}^{-}(2), & \textup{ if } m\equiv 1(\textup{mod }4),\\
		\Sp_{2m}(2), & \textup{ if } m\textup{ is even.}\\
	\end{array}\right.
\end{equation}

\begin{equation}
	\label{eqn_A1leq}
	A_{2m+1}\leq\left\{\begin{array}{llll}
		\Omega_{2m}^{+}(2), & \textup{ if } m\equiv 0(\textup{mod }4),\\
		\Omega_{2m}^{-}(2), & \textup{ if } m\equiv 2(\textup{mod }4),\\
		\Omega_{2m}^{\mp}(2), & \textup{ if }m\equiv \pm1(\textup{mod }4).\\
	\end{array}\right.
\end{equation}

\subsection{Frobenius-Schur indicator}
Suppose that $G$ is a finite group, $F$ is a field, and $M$ is a finitely generated, absolutely irreducible $FG$-module. The \textit{Frobenius-Schur indicator} of $M$ is an integer in the set $\{+1,\,0,\,-1\}$. It is defined to be $0$ if and only if $M$ is not self-dual. The indicator is $+1$ if $M$ carries a nondegenerate $G$-invariant quadratic form, and it is $-1$ if $M$ is self-dual and carries a nondegenerate $G$-invariant alternating form but no $G$-invariant quadratic form. There can be at most one nondegenerate $G$-invariant reflexive sesquilinear form on the module $M$ up to scalar multiplication. As in \cite{Hissrepreofquasi2001}, we use $\circ,\,+,\,-$ to denote $0,\,+1,\,-1$ respectively. In the case the indicator is $\circ$, we need the following technical result.
\begin{lemma}[\cite{bray2013maximal}, Corollary 4.4.2]
	\label{lem_FScircDet}
	Suppose that the character ring of an absolutely irreducible representation with indicator $\circ$ of a group $G$ over $\F_{q^2}$ is generated over $\Z$  by the quadratic irrationalities $a_1,\cdots,a_r$, and let $\bar{a}_i$ denote a $p$-modular reduction of $a_i$ to $\F_{q^2}$. Then the image of $G$ under the representation consists of isometries of a unitary form if and only if $a_i\in\mathbb{R}\leftrightarrow\bar{a}_i\in\F_q$ for $1\le i\le r$.
\end{lemma}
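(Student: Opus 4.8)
The plan is to translate the existence of an invariant unitary form into a condition on the Brauer character of the representation, and then to read that condition off the generators $a_1,\dots,a_r$. Write $M=\F_{q^2}^{\,n}$ for the module afforded by the representation, $\chi$ for its Brauer character (with values in a cyclotomic field $\mathbb{Q}(\zeta_e)$, $p\nmid e$), and let $(M^{*})^{(q)}$ denote the conjugate-dual module, i.e. the $\F_{q^2}$-linear dual $M^{*}$ with the scalar action further twisted through the involutory field automorphism $x\mapsto x^{q}$ of $\F_{q^2}$. The first step is the standard observation that $G$-invariant nondegenerate Hermitian forms on $M$ correspond to $G$-module isomorphisms $M\to (M^{*})^{(q)}$ satisfying a symmetry constraint: since $M$ is absolutely irreducible, $\Hom_{\F_{q^2}G}\bigl(M,(M^{*})^{(q)}\bigr)$ has dimension at most $1$, and a Schur's-lemma argument (any invariant sesquilinear form $B$ satisfies $B(y,x)^{q}=\lambda B(x,y)$ for a scalar $\lambda$ with $\lambda^{q+1}=1$, and Hilbert 90 rescales $B$ to be Hermitian) shows that an invariant unitary form exists if and only if $M\cong (M^{*})^{(q)}$ as $\F_{q^2}G$-modules.

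Next I would pass to Brauer characters. Fixing the usual isomorphism between the $p'$-th roots of unity of $\overline{\F}_{p}$ and of $\bC$, the Brauer character of $M^{*}$ is $c\circ\chi$, where $c$ is complex conjugation, and that of the Frobenius twist $M^{(q)}$ is $\gamma\circ\chi$, where $\gamma\in\Gal(\mathbb{Q}(\zeta_e)/\mathbb{Q})$ is the automorphism $\zeta_e\mapsto\zeta_e^{q}$ (raising all matrix entries, hence all eigenvalues of $p$-regular elements, to the $q$-th power). Since $c$ and $\gamma$ commute, $(M^{*})^{(q)}$ has Brauer character $(\gamma c)\circ\chi$; and as $M$ and $(M^{*})^{(q)}$ are irreducible, $M\cong (M^{*})^{(q)}$ if and only if $\chi=(\gamma c)\circ\chi$, i.e. if and only if $\gamma c$ fixes every value of $\chi$, i.e. (these values generating the character ring $R=\Z[a_1,\dots,a_r]$) if and only if $\gamma c$ fixes each $a_i$.

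The final step unpacks ``$\gamma c$ fixes $a_i$'' using that $a_i$ is a quadratic irrationality, so $K_i:=\mathbb{Q}(a_i)$ has degree at most $2$ and $\Gal(K_i/\mathbb{Q})$ has order at most $2$. Hence $\gamma c$ fixes $a_i$ iff $\gamma$ and $c$ restrict to the same automorphism of $K_i$, i.e. iff ``$c$ fixes $a_i$'' $\Longleftrightarrow$ ``$\gamma$ fixes $a_i$''. Now $c$ fixes $a_i$ precisely when $a_i\in\R$. For the other clause, $p$ is unramified in the abelian extension $\mathbb{Q}(\zeta_e)/\mathbb{Q}$, so the automorphism $\zeta_e\mapsto\zeta_e^{p}$ is the Frobenius at every prime $\mathfrak{p}$ of $\Z[\zeta_e]$ above $p$; hence the reduction map $\Z[\zeta_e]\to\F_{q^2}$ intertwines $\gamma$ with the $q$-power map, i.e. $\overline{\gamma(x)}=\overline{x}^{\,q}$. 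Therefore $\gamma(a_i)=a_i$ forces $\overline{a_i}^{\,q}=\overline{a_i}$, i.e. $\overline{a_i}\in\F_q$; conversely $\overline{a_i}\in\F_q$ gives $\gamma(a_i)\equiv a_i\pmod{\mathfrak p}$, and if $\gamma(a_i)\ne a_i$ then $\mathfrak p$ divides the difference of the two conjugates of $a_i$, forcing $p\mid\operatorname{disc}(a_i)=[\mathcal{O}_{K_i}:\Z[a_i]]^{2}\operatorname{disc}(K_i)$; since $p$ is unramified in $K_i$, this would force $p\mid[\mathcal{O}_{K_i}:\Z[a_i]]$. Excluding that, ``$\gamma$ fixes $a_i$'' $\Longleftrightarrow$ ``$\overline{a_i}\in\F_q$'', hence ``$\gamma c$ fixes $a_i$'' $\Longleftrightarrow$ ``$a_i\in\R\Leftrightarrow\overline{a_i}\in\F_q$''; running this over all $i$ and combining with the first two steps gives the lemma.

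The main obstacle is that last implication, $\overline{a_i}\in\F_q\Rightarrow\gamma(a_i)=a_i$: in full generality one must know $p\nmid[\mathcal{O}_{K_i}:\Z[a_i]]$, since $p$-modular reduction need not separate the two conjugates of an arbitrary quadratic algebraic integer. For the representations met in this paper the $a_i$ can be taken to be Gauss periods or small quadratic integers, for which $\Z[a_i]$ is $p$-maximal and the implication is immediate; this is exactly where the hypothesis that the $a_i$ are the prescribed generators of the character ring is used.
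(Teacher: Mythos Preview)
The paper does not prove this lemma: it is quoted verbatim as Corollary~4.4.2 of \cite{bray2013maximal} and used as a black box. So there is no ``paper's own proof'' to compare against; your write-up is a reconstruction of the argument behind the cited corollary.

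That said, your argument is the standard one and is essentially how the result is proved in \cite{bray2013maximal}: reduce the existence of an invariant Hermitian form to $M\cong (M^{*})^{(q)}$, translate this via Brauer characters into the condition that the Galois automorphism $\gamma c$ (Frobenius composed with complex conjugation) fixes the character field, and then evaluate this on quadratic generators. Your identification of the one genuinely delicate point is accurate: the implication $\overline{a_i}\in\F_q\Rightarrow\gamma(a_i)=a_i$ needs $p\nmid[\mathcal{O}_{K_i}:\Z[a_i]]$, and without some control on the $a_i$ this can fail. In \cite{bray2013maximal} the $a_i$ are the specific quadratic irrationalities appearing in the Atlas character tables (essentially $\sqrt{\pm d}$ or $\tfrac{1}{2}(1+\sqrt{\pm d})$ for squarefree $d$), for which $\Z[a_i]$ is the full ring of integers and the issue evaporates; you are right that the lemma as stated here tacitly assumes this. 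For the purposes of the present paper that is harmless, since the lemma is only invoked for the finitely many nearly simple groups arising in Section~\ref{sec_nearlysimple}, whose character irrationalities are of this standard Atlas shape.
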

For more details on how to determine the Frobenius-Schur indicators, please refer to \cite[p.725]{curtis1987methods} and \cite{Hissrepreofquasi2001}. In the natural characteristic case, we use the data in \cite{LubeckSmalldegree}, and in the other cases we consult \cite{Hissrepreofquasi2001}.

\subsection{Primitive divisor}\label{sec_primdiv}

Let $n,\,k$ be positive integers. A divisor $t$ of $n^k-1$ is called \textit{primitive} if $t$ is coprime to each $n^i-1$ for $1\le i\le k-1$, and we call the largest primitive divisor $\Phi_{k}^{*}(n)$ of $n^k-1$ \textit{the primitive part}. In particular, a prime divisor $t$ of $n^k-1$ is called a primitive prime divisor if $n$ has order $k$ modulo $t$. As a corollary, we have $t\equiv 1\pmod{k}$, and in particular $\gcd(t,k)=1$. If $n^k-1$ has a primitive prime divisor, then $\Phi_{k}^{*}(n)>1$ and it is relatively prime to $k$.
\begin{lemma}[\cite{zsigmondy1892theorie}]
	\label{lem_zsigmondy}
	Let $n$ and $k$ be positive integers such that $n>1$, $k\geq 3$ and $(n,k)\neq(2,6)$. Then $n^k-1$ has at least one primitive prime divisor.
\end{lemma}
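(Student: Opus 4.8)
The plan is to deduce the statement from the factorization of $x^{k}-1$ into cyclotomic polynomials. Write $\Phi_m(x)$ for the $m$-th \emph{cyclotomic polynomial}, so that $x^{k}-1=\prod_{m\mid k}\Phi_m(x)$ and hence $n^{k}-1=\prod_{m\mid k}\Phi_m(n)$. First I would reduce the problem to the single factor $\Phi_k(n)$: if a prime $t$ divides $\Phi_k(n)$ then $t\nmid n$, and, writing $e$ for the multiplicative order of $n$ modulo $t$, the standard divisibility analysis of cyclotomic values shows that either $e=k$, in which case $t$ is a primitive prime divisor of $n^{k}-1$, or $k/e$ is a power of $t$, in which case $t\mid k$. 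Thus it suffices to exhibit a prime divisor of $\Phi_k(n)$ that does not divide $k$.

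The next step is the arithmetic heart of the argument. Suppose a prime $t$ divides both $\Phi_k(n)$ and $k$, and keep the notation $e$ for the order of $n$ modulo $t$, so that $k=e\,t^{j}$ with $j\ge 1$. Since $e$ divides $t-1$ (Lagrange's theorem in $(\Z/t\Z)^{*}$), $e$ is coprime to $t$; hence every prime divisor of $k$ other than $t$ divides $e$ and is therefore $\le t-1$, so $t$ is the largest prime divisor $P(k)$ of $k$. A lifting-the-exponent computation then gives $t^{2}\nmid\Phi_k(n)$, the sole exceptional case being $k=2$, which is excluded by the hypothesis $k\ge 3$. Consequently $\Phi_k(n)=P(k)^{\varepsilon}m$ with $\varepsilon\in\{0,1\}$ and $m$ a product of primitive prime divisors of $n^{k}-1$; if $m>1$ we are done. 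Since $\Phi_k(n)=\prod_{1\le a<k,\ \gcd(a,k)=1}\bigl(n-\zeta^{a}\bigr)$ with $\zeta=e^{2\pi i/k}$, and $|n-\zeta^{a}|\ge n-1>1$ whenever $n\ge 2$ and $k\ge 2$, we always have $\Phi_k(n)>1$; so the theorem reduces to proving the inequality $\Phi_k(n)>P(k)$ for $n\ge 2$, $k\ge 3$, $(n,k)\ne(2,6)$.

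To prove this inequality, for $n\ge 3$ I would use $\Phi_k(n)\ge(n-1)^{\varphi(k)}\ge 2^{\varphi(k)}$ together with $\varphi(k)\ge P(k)-1$ (valid since $P(k)\mid k$), which yields $2^{\varphi(k)}\ge 2^{P(k)-1}\ge P(k)$ with equality only when $k\le 2$; hence $\Phi_k(n)>P(k)$. For $n=2$ the bound $(n-1)^{\varphi(k)}=1$ is useless, so instead I would write $\Phi_k(2)=2^{\varphi(k)}\prod_{m\mid k}\bigl(1-2^{-m}\bigr)^{\mu(k/m)}\ge 2^{\varphi(k)}\prod_{m\ge 1}\bigl(1-2^{-m}\bigr)>2^{\varphi(k)-2}$, which exceeds $P(k)$ whenever $\varphi(k)$ is not too small. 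The finitely many remaining pairs, namely those with $k$ small, are checked by direct computation; it is exactly here that the unique exception $(n,k)=(2,6)$ surfaces, since $\Phi_6(2)=3=P(6)$ and $2^{6}-1=3^{2}\cdot 7$ indeed has no primitive prime divisor.

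I expect the main obstacle to be the arithmetic of the second step: proving that the one obstructing prime occurs only to the first power requires the lifting-the-exponent lemma with its characteristic-$2$ caveat, and pinning that prime down as $P(k)$ rests on combining $e\mid t-1$ with the fact that $k/e$ is a power of $t$. The final estimate is routine except in the degenerate case $n=2$, where the trivial lower bound collapses and must be replaced by the sharper product bound before the finitely many small values of $k$ are disposed of by hand.
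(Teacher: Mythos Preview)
The paper does not supply its own proof of this lemma: it is stated as a classical result and attributed to Zsigmondy's 1892 paper, so there is nothing against which to compare your argument line by line. That said, your strategy is the standard modern proof of Zsigmondy's theorem via cyclotomic polynomials, and it is essentially correct.

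A couple of small points worth tightening. First, in the step where you bound $\prod_{m\mid k}(1-2^{-m})^{\mu(k/m)}$ from below by the full Euler product $\prod_{m\ge 1}(1-2^{-m})$, you should say explicitly why this holds: the factors with $\mu(k/m)=-1$ are each $>1$ and may be dropped, and the remaining factors with $\mu(k/m)=+1$ form a finite subproduct of terms in $(0,1)$, hence dominate the infinite product. Second, your bound $2^{\varphi(k)-2}>P(k)$ fails for several small $k$ (for instance $k=3,4,5,10$ in addition to $k=6$), so the ``finitely many remaining pairs'' to be checked by hand is a slightly longer list than the phrasing suggests; it would strengthen the write-up to enumerate them, since this is exactly where the genuine exception $(n,k)=(2,6)$ is isolated. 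With these clarifications the argument is complete.
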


The primitive prime divisors play an important role in the study of geometric objects that satisfy certain transitivity conditions, see \cite{Bamberg2009Classificationovoid, FengLi} for instance.
\medskip

\section{The main results}
\label{sec:MainResults}

Let $V$ be a vector space of dimension $d$ over $\F_q$ equipped with a nondegenerate reflexive sesquilinear or quadratic form $\kappa$, and suppose that $d-1$ and $q$ are not both even in the case $\bO$. Let $\cP$ be the associated classical polar space. We first list some examples of transitive $m$-ovoids of $\cP$. In the sequel, we use $\GO^{\epsilon}_{d}(q)$ and $\textup{O}^{\epsilon}$ to denote the similarity and isometry group respectively when $\kappa$ is a quadratic form of type $\epsilon$ on $V$.

\begin{example}[Classical examples]\label{exa_Classical} Suppose that both $q$ and $d$ are even and $\kappa$ is an alternating form. Let $Q^{-}(d-1,q)$ be an elliptic quadric whose associated polar form is $\kappa$. Then the points of $Q^{-}(d-1,q)$ and its complement form a $\frac{q^{d/2-1}-1}{q-1}$- and $q^{d/2-1}$- ovoid of $\cP=W(d-1,q)$ respectively. Here $\Omega^{-}_{d}(q)$ acts transitively on both $Q^{-}(d-1,q)$ and its complement in $W(d-1,q)$.
\end{example}	

\begin{example}	[Imprimitive examples]\label{exa_Imprimitive}
	Suppose that $q=3$, $d=5,7$ or $8$, and $Q(\sum_{i=1}^{d}x_{i}e_{i})=\sum_{i=1}^{d}x_{i}^2$, where the $e_{i}$'s form a basis of $V$. Let $\GO_{1}(3)\wr S_{d}$ be the stabilizer of the direct sum decomposition $V=U_{1}\oplus U_{2}\oplus\ldots\oplus U_{d}$, where $U_{i}=\F_{3}\cdot e_{i}$. By examining the irreducible subgroups of $\GO_{1}(3)\wr S_{d}$ one by one using Magma \cite{Magma}, we obtain transitive $2$-ovoids of $Q(d-1,3)$ in the case $d=5$, transitive $1,\,2,\,3,\,4,\,5,\,6,\,8$-ovoids of $Q(d-1,3)$ in the case $d=7$, and transitive $8,\,16,\,24,\,32$-ovoids of $Q^{+}(d-1,3)$ in the case $d=8$.
\end{example}

\begin{example}[Extension field examples]\label{exa_Extension}
	Let $b$ be a proper divisor of $d$. Suppose that there is a $d/b$-dimensional vector space $V'$ over $\F_{q^b}$ equipped with a form $\kappa'$ such that $(V',\kappa',\cP')$ and $(V,\kappa,\cP)$ are associated as in Table \ref{tab_extfieldP'gt0}. Set
	\begin{equation}\label{eqn_M1def}
		\cM_{1}=\{\left\langle \lambda v\right\rangle_{\F_{q}}:\,\lambda\in\F_{q^b}^{*},\, \left\langle v\right\rangle_{\F_{q^b}}\in\cP'\}.
	\end{equation}
	It is an $m$-ovoid of $\cP$ in the cases listed below in Table \ref{tab_extfdexamp}, cf. \cite[Table 1]{KellyCons}. The lower bound on $d/b$ is to make sure that $\cP'$ is not empty. The group $\Omega(V',\kappa')$ is transitive on $\cM_{1}$ by \cite[Lemma 2.10.5]{kleidman1990subgroup}.
\end{example}	
\begin{table}[H]
	\caption{Transitive $m$-ovoids in the extension field case}
	\label{tab_extfdexamp}
	\centering
	\begin{tabular}{cccc}
		\toprule
		$\cP$ &  $\cP'$ &  $m$ & Condition\\  \hline
$H(d-1,q)$ & $H(d/b-1,q^b)$ & $\frac{q^{(d-b)/2}-1}{q-1}$ &  $d$ odd, $d/b\ge 3$ \\
$Q^{-}(d-1,q)$ & $Q^{-}(d/b-1,q^b)$ &  $\frac{q^{d/2-b}-1}{q-1}$ & $d/b\ge 4$ even \\
$Q^{-}(d-1,q)$ & $H(d/b-1,q^b)$ &$\frac{q^{(d-b)/2}-1}{q-1}$ &$b$ even, $d/b\ge 3$ odd\\
$Q^{+}(d-1,q)$ & $Q(d/2-1,q^2)$ & $\frac{q^{d/2-1}-1}{q-1}$ & $d/2\ge 3$ odd \\
$W(d-1,q)$ & $H(d/b-1,q^b)$ & $\frac{q^{(d-b)/2}-1}{q-1}$ & $b$ even, $d/b\ge 3$ odd\\
		\bottomrule
	\end{tabular}
\end{table}

\begin{example}[Symplectic type examples]	
	\label{exa_Symplectic}
	The extraspecial $2$-group $2_{-}^{1+4}$ has an irreducible module $V$ of dimension 4 over $\F_{3}$, and it preserves a nondegenerate alternating form $\kappa$ on $V$. The normalizer of $2_{-}^{1+4}$ in $\Omega(V,\kappa)$ is $2_{-}^{1+4}.O_{4}^{-}(2)$. By examining the orbits of its subgroups, we obtain two transitive $2$-ovoids of $W(3,3)$ with stabilizer $2.\Omega_4^-(2)$. The latter group has exactly two orbits on $W(3,3)$.
\end{example}

In the following examples, the $m$-ovoids admit a transitive automorphism group $H_0$ such that $S\le H_0\le M$, where $M$ is an almost simple subgroup of $\textup{P}\Gamma(V,\kappa)$ with socle $S$. Let $H$ be the full preimage of $H_0$ in $\Gamma(V,\kappa)$. The group $\tilde{S}:=H^{(\infty)}$ acts on $V$ absolutely irreducibly in all those examples.

\begin{example}[Alternating group]\label{exa_Alternating}
Take $S=A_{n}$ for some $n\geq5$, and let $(q,d,e)$ be as in Table \ref{tab_valuede}.
\begin{enumerate}
\item[(1)] Let $V$ be the fully deleted module for $A_n$. We check with Magma \cite{Magma} that a single $A_n$-orbit forms an $m$-ovoid in the cases listed in Table \ref{tab_fully}. In each case, the $A_{n}$-orbits have different sizes, so they are all stabilized by $M$.

\begin{table}[!h]
	\caption{Transitive $m$-ovoids from the fully deleted modules of $A_n$}
	\label{tab_fully}
	\centering
	\begin{threeparttable}
		\begin{tabular}{ccccc}
			\toprule
			$\cP$  & $A_n$ & $\#$ $A_n$-orbits& $m$ &$(q,\,d,\,e)$ \\
			\hline
			$Q(6,3)$    & $A_{8}$  & $3$ & $1,\,2,\,10$ &$(3,\,7,\,6)$\\
			            & $A_9$    & $2$ & $3,10$       &$(3,\,7,\,6)$\\
			$Q^+(7,5)$  & $A_{9}$  & $8$ & $1,4,6,10,20,30,40,45$ &$(5,\,8,\,6)$ \\
			            & $A_{10}$ & $4$ & $1,10,45,100$ &$(5,\,8,\,6)$\\
			$Q^+(13,2)$ & $A_{15}$ & $3$ & $7,\,21,\,99$ & $(2,\,14,\,12)$\\
			            & $A_{16}$ & $2$ & $28,\,99$ & $(2,\,14,\,12)$\\
			\bottomrule	
		\end{tabular}
	\end{threeparttable}
\end{table}	

\item[(2)] We then consider the other absolutely irreducible representations of $A_n$ or its covering groups. We check with Magma \cite{Magma} that a single $A_n$-orbit forms an $m$-ovoid in the cases listed in Table  \ref{tab_otherAlt}. Those transitive $m$-ovoids are also $M$-invariant.
\begin{table}[!h]		
	\caption{Transitive $m$-ovoids from other representations of $A_{n}$ and its covering groups}
	\label{tab_otherAlt}
	\centering
	\begin{threeparttable}
		\begin{tabular}{ccccc} \toprule
			$\cP$ & $\tilde{S}$& $\#$ $A_n$-orbits & $m$ &$(q,\,d,\,e)$ \\ \hline
			$Q^+(7,5)$ & $2^\cdot A_{10}$&  $2$ &   $36,\,120$ & $(5,\,8,\,6)$ \\	
			           & $2^\cdot A_{9}$ &  $3$ &  $36,\,60,\,60$  &$(5,\,8,\,6)$ \\
			           & $A_{7}$         &  $16$&  $1,\,5,\,10,\,20$ &  $(5,\,8,\,6)$ \\
			\bottomrule
		\end{tabular}	
	\end{threeparttable}
\end{table}
\end{enumerate}
\end{example}

\begin{example}[Sporadic case]\label{exa_thmsporadic}	
	The group $2^\cdot J_{2}$ has a unique  $6$-dimensional absolutely irreducible module $V$ over $\F_{5}$, cf. \cite{Lindsey1970Onsix}. It preserves a nondegenerate alternating form $\kappa$. We verify by Magma \cite{Magma} that $S=J_{2}$ has two orbits on $W(5,5)$, and they form transitive $15$- and $16$-ovoids of $W(5,5)$ respectively.
\end{example}

\begin{example}[Cross-characteristic case]\label{exa_thmCross}
Let $S$ be a simple group of Lie type whose defining characteristic is different from $p$,
and let $(q,d,e)$ be as in Table \ref{tab_valuede}. We check with Magma that we obtain transitive $m$-ovoids from a single $S$-orbit or $M$-orbit in the cases listed in Table \ref{tab_thmcross}.  In the column 4 of Table \ref{tab_thmcross}, a parameter in bold inside a bracket indicates that the $m$-ovoid is $M$-transitive but not $S$-transitive. All the other $m$-ovoids consist of a single $S$-orbit.

\begin{table}[!ht]
	\centering   \caption{Transitive $m$-ovoids in the cross-characteristic case}\label{tab_thmcross}
	\begin{threeparttable}	
		\begin{tabular}{ccccc} \toprule
			$\cP$ & $\tilde{S}$ & $\#$ $S$-orbits  & $m$ & $(q,\,d,\,e)$\\ \hline
			$Q(6,3)$   & $\PSL_{2}(7)$& $6$  &  $1,\,2$ & $(3,\,7,\,6)$\\
			           & $\PSL_2(8)$  & $5$  &   $1,\,3,\,\{\bf 9\}$ & $(3,\,7,\,6)$ \\
                       & $\PSp_{6}(2)$& $2$  &    $1,\,12$  & $(3,\,7,\,6)$ \\
			$Q(6,5)$   & $\PSL_2(8)$  & $13$ &   $\{\bf 6\}$ & $(5,\,7,\,6)$ \\	
			           & $\PSU_3(3)$  & $3$  &    $16$ & $(5,\,7,\,6)$ \\	
			           & $\PSp_{6}(2)$  & $2$ &    $15,\,16$ & $(5,\,7,\,6)$ \\
			$Q^-(5,3)$ & $2^\cdot \PSL_{3}(4)$ & $2$ &   $2$ & $(3,\,6,\,6)$ \\
			$Q^+(7,5)$ & $\PSL(2,8)$    & 48 &  $1,4,\{\bf 6\}$ & $(5,8,6)$ \\
			           & $2^\cdot\PSp_{6}(2)$	&$2$  &   $60,\,96$  & $(5,\,8,\,6)$ \\
			           & $2^{\cdot}\POmeg^{+}_{8}(2)$ & $2$ & $60,\, 96$ & $(5,\,8,\,6)$ \\
			$W(7,2)$   & $\PSL_2(17)$ & $2$  &  $6,\,9$ & $(2,\,8,\,8)$ \\
			$W(11,2)$  & $\PSL_2(25)$ & $6$  & $12$ & $(2,\,12,\,12)$ \\
			\bottomrule
		\end{tabular}			
\begin{tablenotes}
\footnotesize\item[*] ``$\{\bf m\}$" indicates that  the $m$-ovoid is $M$-transitive but not $S$-transitive.
		\end{tablenotes}
	\end{threeparttable}
\end{table}
\end{example}

\begin{remark}
Here are some details for the case $(\cP,\tilde{S})=\left(Q^{+}(7,5),2^{\cdot}\POmeg_{8}^{+}(2)\right)$ in Example \ref{exa_thmCross}. Let $H=2.\textup{GO}^+_8(2)$ be the Weyl group of the $E_8$-root lattice in $\mathbb{R}^8$, generated by the $120$ reflections in the $240$ root vectors, cf. \cite{Atlas}. It preserves the Euclidean inner product $(x,y)=\sum_{i=1}^8x_iy_i$. This representation of $H$ has integer coefficients, so $\otimes\F_5$ yields an embedding of $H$ in $\textup{GO}^+_8(5)$. It has exactly two orbits on $Q^+(7,5)$. The orbit that contains $\la(1,3,0,\cdots,0)\ra$ is a $60$-ovoid, and the other orbit that contains $\la(1,1,1,1,1,0,0,0)\ra$ is a $96$-ovoid.
\end{remark}

\begin{example}[Natural-characteristic case]
	\label{exa_thmNatural}
Let $S$ be a simple group of Lie type of characteristic $p$. In Table \ref{tab_thmNatural}, we list five cases where a single $S$-orbit forms an $m$-ovoid. In the column 4, a parameter in bold inside a bracket indicates that the $m$-ovoid is $M$-transitive but not $S$-transitive. We give some details below.
\begin{enumerate}
\item[(a)] For case 1, there are three $S$-orbits and the shortest one is the Ree ovoid. By taking the same matrix description of  $^2G_{2}(q)$ as in \cite{BaarnhielmRecognising} and \cite{KemperMatrixgenerators}, we can show that the other two orbits are also $q$- and $q^2$-orbits respectively.
\item[(b)] For cases 2 and 3, there are respectively two $S$-orbits one of which is an ovoid, cf. \cite[Sections 4, 7]{KantorOvoids}.
\item[(c)] For case 4, there are three $\PGU_{3}(q)$-orbits $\mathcal{O}_{1}$, $\mathcal{O}_{2}$ and $\mathcal{O}_{3}$ with length $q^3+1$, $(q+q^2)(q^3+1)$ and $q^3(q^3+1)$ respectively. The orbit $\mathcal{O}_{1}$ is the unitary ovoid,  and $\mathcal{O}_{2}$, $\mathcal{O}_{3}$ are $(q+q^2)$- and $q^3$-ovoids respectively, cf. \cite[Section 4]{KantorOvoids}. The subgroup $\PSU_{3}(q)$ is transitive on both $\mathcal{O}_{1}$ and $\mathcal{O}_{3}$, and splits $\mathcal{O}_{2}$ into three orbits. There is no $\PSU_{3}(q)$-invariant $m$-ovoid properly contained in $O_2$, cf. \cite{FengLT}.
\item[(d)] For case 5, there are two $S$-orbits and the shortest one is an ovoid, cf. \cite[Theorem 27.3]{LuneburgTranslation} and \cite{SegreCaps}.
\end{enumerate}
\begin{table}[!h]
		\centering
	\begin{threeparttable}
		\caption{Transitive $m$-ovoids in the natural characteristic case}
		\label{tab_thmNatural}	
		\begin{tabular}{cccccc}
			\toprule
			Case&$\cP$   &  $S=H^{(\infty)}$ &$\#\,S$-\text{orbits} & $m$& Condition\\ \hline
			1&	$Q(6,q)$ & ${^{2}G}_{2}(q)$ &3& $1,\,q,\,q^2$ &$p=3$, $f\geq3$ odd\\	
			2&	$Q(6,q)$ & $\PSU_{3}(q)$ &2& $1,\,q+q^2$&	$p=3$\\
			3&  $Q^{+}(7,q)$ & $\PSL_{2}(q^3)$ &2& $1,\,q+q^2+q^3$ &	$p=2$ \\
			4&	$Q^{+}(7,q)$ & $\PSU_{3}(q)$ & $5$ &  $1,\,\{\mathbf{q+q^2}\},\,q^3$ &	$q\equiv2(\textup{mod }3)$\\
			5&  $W(3,q)$ & $\text{Sz}(q)$ &2& $1,\,q$ &	$p=2$, $f\geq3$ odd\\
			\bottomrule
		\end{tabular}
\begin{tablenotes}
\footnotesize\item[*] ``$\{\bf m\}$" indicates that  the $m$-ovoid is $M$-transitive but not $S$-transitive.
\end{tablenotes}
	\end{threeparttable}
\end{table}
\end{example}

\begin{remark}
For case (a) of Example \ref{exa_thmNatural}, the Ree group $^{2}{G}_2(q)$ fixes the Ree spread of the split Cayley hexagon $\mathcal{H}(q)$, cf. \cite[Table 1]{BambergclassificationTransi}. It is a $1$-system of $Q(6,q)$ and hence the set of points they cover forms a $(q+1)$-ovoid $\cM$ of $Q(6,q)$, cf. \cite{Shultmsystem}. The set $\cM$ is exactly the union of the two short orbits.
\end{remark}

In the next example, we handle the case $\cP=Q^+(7,2)$.
\begin{example}\label{exa_Q+72}
	By Magma \cite{Magma}, we are able to enumerate all the transitive $m$-ovoids of $Q^+(7,2)$. There are transitive $m$-ovoids for $m\in\{1,2,3,4,6,8,9,12,14\}$. In particular, the $1$-ovoid is the Dye ovoid and the $14$-ovoid is its complement, cf. \cite{Bamberg2009Classificationovoid,Dye1977partitions}.
\end{example}

Finally, we present some more examples that arise from field reductions.
\begin{example}\label{exa_FR_PSU3_QPlus7}
	Suppose that $q=3^f$, $\cP'=Q(6,q^2)$, $\cP=Q^+(13,q)$. From \cite[Section 4]{KantorOvoids} we deduce that $\PSU_{3}(q)$ has exactly two orbits on the nonzero singular vectors of $V'$ and has exactly one orbit on the nonzero vectors $v\in V'$ such that  $v^\perp\cap Q(6,q^2)=Q^+(5,q^2)$. If $v$ is such a vector, then we can show that $\la \mu v\ra_{\F_q}\in Q^+(13,q)$ for some $\mu\in\F_{q^2}^*$ by using the explicit model in \cite{KantorOvoids}. We conclude that $\PSU_{3}(q)$ has three orbits on $Q^+(13,q)$. Since one orbit corresponds to an ovoid of $Q(6,q^2)$ by \cite{KantorOvoids}, we deduce that the three orbits on $Q^+(13,q)$ are $(q+1)$-, $q^2(q^2+1)(q+1)$- and $q^6$-ovoids of $Q^+(13,q)$ respectively by Example \ref{exa_Extension} and \cite{KellyCons}.
\end{example}

\begin{example}\label{exa_FR_W_Sz}
Suppose that $q=2^f$ with $f$ odd and $b>1$ is an odd integer. The group $\textup{Sz}(q^b)$ has two orbits on $W(3,q^b)$ one of which is an ovoid, cf. \cite[Theorem 27.3]{LuneburgTranslation}. By the matrix presentations of the Suzuki groups in \cite{SuzukiPNAS} and the same argument as in the proof of \cite[Lemma 2.11]{LiebeckRank3}, we deduce that $\textup{Sz}(q^b)$ has two orbits on the nonzero vectors of the ambient space. This leads to transitive $\frac{q^b-1}{q-1}$- and $q^b\frac{q^b-1}{q-1}$-ovoids of $W(4b-1,q)$ respectively by field reduction, cf. \cite{KellyCons}.
\end{example}

\begin{example}\label{exa_FR_M1compl}
	Take the same notation as in Example \ref{exa_Extension}. Then $\cP\setminus\cM_1$ is an $m$-ovoid with $\Omega(V',\kappa')$ acting transitively on it in the following cases:
\begin{enumerate}
\item[(1)] $d/2$ even, $\cP=Q^-(d-1,q)$, $\cP'=Q^-(d/2-1,q^2)$, $m=q^{d/2-2}$, cf. Section \ref{subsec_II_QM};
\item[(1')] $d/4\ge 3$ odd, $\cP=Q^-(d-1,q)$, $\cP'=H(d/4-1,q^4)$, $m=q^{d/2-2}$, cf. Section \ref{subsec_II_QM};
\item[(2)] $d/2\ge 3$ odd, $\cP=W(d-1,q)$, $\cP'=H(d/2-1,q^2)$, $m=q^{d/2-1}$, cf. Section \ref{subsec_II_W};
\item[(3)] $qd/2$ odd, $\cP=Q^+(d-1,q)$, $\cP'=Q(d/2-1,q^2)$, $m=q^{d/2-1}$, cf. Section \ref{subsec_II_QP}.
\end{enumerate}
In the case $q$ is odd,  $\cP=Q^+(13,q)$ and $\cP'=Q(6,q^2)$, $\cP\setminus\cM_1$ (resp. $\cM_{1}$) is a transitive $q^6$  (resp. $\frac{q^6-1}{q-1}$)-ovoid with $G_{2}(q^2)$ as a transitive automorphism group by Example \ref{exa_Extension} and \cite[Theorem 7.1]{giudici2020subgroups}.
\end{example}

\begin{remark}
Take the same notation as in Example \ref{exa_Extension}.
\begin{enumerate}
\item[(1)]When $d/4$ is an odd integer greater than $1$, the $q^{d/2-2}$-ovoids of $Q^-(d-1,q)$ arising from cases (1), (1') of Example \ref{exa_FR_M1compl} are projectively equivalent, cf. line 9 of Table \ref{tab_extfieldP'gt0}. Therefore, case (1') is essentially covered by case (1). We list them separately here to indicate that they arise from different field reductions.
\item[(2)] There are more cases where $\cP\setminus\cM_1$ is a transitive $m$-ovoid, cf. Examples \ref{exa_FR_HtoH_smallpara}, \ref{exa_FR_QMtoQM_smallpara},
\end{enumerate}
\end{remark}

We are now ready to state our classification of $m$-ovoids of finite classical polar spaces that admit a transitive automorphism group acting irreducibly on the ambient vector space. We do not consider the case where the ambient vector space $V$ has odd dimension $d$ and the field $\F_{q}$ has even characteristic, since we have the isomorphism $\textup{Sp}_{d-1}(q)\cong\Omega_d(q)$ which is consistent with their actions on the associated classical geometry.

\begin{thm}\label{thm_PGamUTransi}
	Suppose that $q=p^f$ with $p$ prime and $f$ even, $d>3$ and $(q,d)\ne(4,4)$. Let $\cM$ be an $m$-ovoid of $H(d-1,q)$ with a transitive automorphism group $H_0\le\PGamU_d(q^{1/2})$ such that its full preimage $H$ in $\Gamma(V):=\GamU_d(q^{1/2})$ acts irreducibly on the ambient space $V$. Then $d$ is odd, $E:=C_{\textup{End(V)}}(H^{(\infty)})\cong\F_{q^b}$ for a divisor $b>1$ of $d$, and we can regard $V$ as a vector space $V'$ over $E$. Moreover, there is a nondegenerate $H^{(\infty)}$-invariant Hermitian form on $V'$. If we write $\cP'=H(d/b-1,q^b)$ for the associated polar space, then $\cM$ is either in Example \ref{exa_Extension} or in Example \ref{exa_FR_HtoH_smallpara}.
\end{thm}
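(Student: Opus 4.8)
The plan is to run the standard ``primitive prime divisor plus \cite{bamberg2008overgroups}'' machine. Since $H_0$ acts transitively on $\cM$, orbit--stabilizer gives $m\theta_r=|\cM|\mid|H_0|$, hence $\theta_r\mid|H|$. Writing $q=q_0^2$, Table \ref{tab_valuede} shows $\theta_r=q_0^{e}+1$ with $e=d$ if $d$ is odd and $e=d-1$ if $d$ is even; in both cases $e$ is odd and $e>d/2$. By Lemma \ref{lem_zsigmondy}, $q_0^{2e}-1$ has a primitive prime divisor $t$ unless $(q_0,2e)=(2,6)$, i.e. $(q,e)=(4,3)$; as $e\in\{d,d-1\}$, this is exactly $(q,d)\in\{(4,3),(4,4)\}$, both ruled out by the hypotheses $d>3$ and $(q,d)\ne(4,4)$. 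Such a $t$ divides $q_0^e+1=\theta_r$, hence $|H|$, and the multiplicative order of $q$ modulo $t$ is $e$; so $|H|$ is divisible by a primitive prime divisor of $q^e-1$ with $e\in\{d,d-1\}$, which is ``large'' relative to $d$.

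\textbf{Running through \cite{bamberg2008overgroups}.} I then feed the subgroup $H\le\GamU_d(q_0)$, together with its $t$-element, into the classification of subgroups of finite unitary groups whose order is divisible by such a primitive divisor. Reducible $H$ is excluded by hypothesis. Because $e$ is so close to $d$, the imprimitive, tensor-product and tensor-induced configurations are incompatible with the degree, and the only ``geometric'' candidate other than a field-extension subgroup is $H^{(\infty)}\trianglerighteq\SU_d(q_0)$; but that group is transitive on the isotropic points of $H(d-1,q)$, forcing $\cM=\cP$, which is not a proper $m$-ovoid and is discarded. The nearly simple ($\mathscr{S}$-type) candidates form a finite list, and I check each directly with Magma \cite{Magma}, exactly as in the examples of Section \ref{sec:MainResults}; none yields an $m$-ovoid of a Hermitian polar space. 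The surviving possibility is that $H^{(\infty)}$ lies inside a field-extension subgroup $\GU_{d/b}(q_0^b).b$ for a divisor $b>1$ of $d$. When $d$ is even this too is impossible: the maximal tori of $\GU_{d/b}(q_0^b)$ have orders that are products of factors $q_0^{bj}\pm1$ with $\sum j\le d/b$, and since $\gcd(b,d-1)=1$ for every $b>1$ dividing $d$, none of these is divisible by a primitive prime divisor of $q_0^{2(d-1)}-1$. Hence no admissible $H$ exists for $d$ even, which proves $d$ is odd.

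\textbf{The field-reduction picture.} With $d$ odd and $H^{(\infty)}$ irreducible inside $\GU_{d/b}(q_0^b).b$, Schur's lemma makes $E:=C_{\textup{End}(V)}(H^{(\infty)})$ a field; the structure supplied by \cite{bamberg2008overgroups} together with the primitive-divisor constraint pins it down as $E\cong\F_{q^b}=\F_{q_0^{2b}}$, so $V$ becomes a $(d/b)$-dimensional $E$-space $V'$ on which $H^{(\infty)}$ acts $E$-semilinearly and absolutely irreducibly. Uniqueness up to scalars of the invariant reflexive sesquilinear form, together with the Frobenius--Schur/field-reduction analysis (cf. Lemma \ref{lem_FScircDet} and \cite[Section 4.3]{kleidman1990subgroup}), forces it to be a nondegenerate Hermitian form $\kappa'$ on $V'$ with $\kappa=\tr_{q^b/q}\circ\lambda\kappa'$ for a suitable scalar $\lambda$; this is case 5 of Table \ref{tab_extfieldP'gt0}, with $\cP'=H(d/b-1,q^b)$. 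Since $\cM\ne\emptyset$ forces $\cP'\ne\emptyset$ we get $d/b\ge2$, and as $d/b$ is odd, $d/b\ge3$ (the degenerate $d/b=1$ case has $\cP'=\emptyset$ and yields no $m$-ovoid).

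\textbf{Identifying $\cM$, and the main obstacle.} It remains to show $\cM$ is the set $\cM_1$ of \eqref{eqn_M1def} (Example \ref{exa_Extension}) or one of the small-parameter exceptions of Example \ref{exa_FR_HtoH_smallpara}. The $H^{(\infty)}$-orbits on the points of $\cP$ fall into two kinds: those filling out $\cM_1$, namely the $\F_q$-points contained in points of $\cP'$ (indeed $\tr_{q^b/q}\circ\lambda\kappa'$ vanishes on every isotropic $\F_{q^b}$-line), and the ``transverse'' orbits running through vectors $w$ with $\kappa'(w,w)\ne0$ but $\tr_{q^b/q}(\lambda\kappa'(w,w))=0$; in particular the complement $\cP\setminus\cM_1$ is a union of several transverse orbits and so is not a single $H_0$-orbit except possibly for small $q^b$. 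Using that $\cM$ is a single $H_0$-orbit meeting every generator of $\cP$ in exactly $m$ points, I would compare the intersection numbers of $\cM_1$ and of the transverse orbits with generators; the sizes of the transverse orbits and the way they distribute over generators are governed by Kloosterman-type character sums over $\F_{q^b}$, and the estimates established in Section \ref{sec:boundsection} show that once $q^b$ and $d/b$ leave an explicit finite range the only $H_0$-invariant $m$-ovoid is $\cM_1$, the finitely many remaining parameter values being enumerated by direct computation to produce Example \ref{exa_FR_HtoH_smallpara}. The real difficulty is precisely this step --- converting ``meets every generator in $m$ points'' into a workable constraint on the transverse orbits and bounding the associated exponential sums sharply enough to kill everything but $\cM_1$ --- whereas the first three steps are routine bookkeeping on top of \cite{bamberg2008overgroups} and the standard field-reduction dictionary.
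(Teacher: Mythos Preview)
Your overall architecture matches the paper's: extract a primitive prime divisor of $q^e-1$, feed $H$ into \cite{bamberg2008overgroups}, eliminate the classical, imprimitive, symplectic-type and nearly simple cases, and land in the extension-field case; for $d$ even this is case~6 of Table~\ref{tab_extfieldP'gt0}, killed by divisibility (Lemma~\ref{lem_extfld2468}), forcing $d$ odd. So far so good.

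There are, however, two genuine gaps in the extension-field analysis.

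\textbf{The case $b=d$ cannot be dismissed as you do.} Your sentence ``the degenerate $d/b=1$ case has $\cP'=\emptyset$ and yields no $m$-ovoid'' is a non sequitur: $\cM$ is an $m$-ovoid of $\cP=H(d-1,q)$, not of $\cP'$, and $H\le\GamU_1^{\#}(q^{d/2})$ is a perfectly legitimate subgroup acting irreducibly on $V$ whose point orbits in $\cP$ could in principle be $m$-ovoids. The paper (Section~\ref{subsec_II_Hdood}, case $b=d$) actually has to work here: it bounds $|\cM|\le\frac{df(q^{d/2}+1)}{2(q^{1/2}+1)}$ by exhibiting an involution in $\Gamma^{\#}$ fixing the base point, combines this with the lower bound on $m$ from Theorem~\ref{thm_Bboundm} to get $p^{y/6}\le y$ (with $y=df/2$), and then uses the extra divisibility $p^{f/2}+1\mid df$ to verify by computer that no $(p,b,f)$ tuple survives. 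You need this argument.

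\textbf{You never identify $H^{(\infty)}$.} For $b<d$ you only place $H^{(\infty)}$ inside a field-extension subgroup and then appeal to Schur's lemma for $E\cong\F_{q^b}$; but to run the Kloosterman-sum machinery of Section~\ref{subsec_Hdodd} you need to know that the $H^{(\infty)}$-orbits on $\cP\setminus\cM_1$ are precisely the level sets $\{\la v\ra:\,H'(v)\in\F_{q^{1/2}}^*\cdot t\}$, and this requires $H^{(\infty)}=\SU_{d/b}(q^{b/2})$, not merely containment in $\GU_{d/b}(q_0^b).b$. The paper obtains this by a \emph{second} application of \cite[Theorem~4.1]{bamberg2008overgroups}, now to $K:=H\cap\Delta(V',\kappa')\le\GL(V')$: Lemmas~\ref{lem_K_primdiv} and~\ref{lem_Kirr} show $K$ is irreducible on $V'$ with $\Phi_{df}^*(p)\mid|K|$, and since $bf>2$ and $K$ is not itself of extension-field type (by maximality of $b$), the only output is $K^{(\infty)}=\SU_{d/b}(q^{b/2})$. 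Only then does \cite[Lemma~2.10.5]{kleidman1990subgroup} give the orbit description that makes Proposition~\ref{prop_Herm_Tbound} and the divisibility $|T|\mid bf/2$, $(q^{1/2}+1)\mid|T|$ applicable, reducing to the three tuples checked against \eqref{eqn_HtoH_psiDcond}. Without this step, your ``transverse orbits'' are not the ones Section~\ref{sec:boundsection} controls.
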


\begin{table}[!h]
\centering
\caption{Summary of parameters of transitive $m$-ovoids in $\cP=H(d-1,q)$, $(q,d)\neq(2^2,4)$}
\label{tab_sumHerm}	
\begin{tabular}{cccc}
\toprule
$m$                         & Condition                                     & Description           & Reference                                               \\ \hline
$\frac{q^{(d-b)/2}-1}{q-1}$ & $d$ odd,\, $d/b\ge 3$                          & $\cM_1$               & Example \ref{exa_Extension}           \\
$q^{(d-3)/2}$     & $q\in\{2^2,2^6\}$,\, $b=3$,\, $d\ge 9$ odd & $\cP\setminus\cM_{1}$ & Example \ref{exa_FR_HtoH_smallpara} \\ \bottomrule
\end{tabular}
\end{table}

\begin{thm}\label{thm_PGamOTransi}
	Suppose that $q=p^f$ with $p$ an odd prime and $d$ is odd with $d>3$. Let $\cM$ be an $m$-ovoid of $Q(d-1,q)$ with a transitive automorphism group $H_0\le\PGamO_d(q)$ such that its full preimage $H$ in $\Gamma(V):=\GamO_d(q)$ acts irreducibly on the ambient space $V$. Then it is one of the cases for $Q(d-1,q)$ in Examples \ref{exa_Imprimitive}, \ref{exa_Alternating}, \ref{exa_thmCross} and \ref{exa_thmNatural}.
\end{thm}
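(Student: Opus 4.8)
The plan is to follow the scheme laid out in the introduction: convert the transitivity of $\cM$ into a divisibility condition, invoke the classification of \cite{bamberg2008overgroups} of subgroups of $\GamO_d(q)$ whose order is divisible by a suitable primitive prime divisor, and then run through the resulting short list of candidates. By orbit--stabilizer, $|\cM|=m\theta_r$ divides $|H|$, and by Table \ref{tab_valuede} the rank is $r=(d-1)/2$ and $\theta_r=q^{(d-1)/2}+1$. Since $p$ is odd and $d\ge 5$, Lemma \ref{lem_zsigmondy} supplies a primitive prime divisor $t$ of $q^{d-1}-1$; as the multiplicative order of $q$ modulo $t$ equals $d-1$ and $t\mid(q^{(d-1)/2}-1)(q^{(d-1)/2}+1)$, we must have $t\mid\theta_r$, whence $t\mid|H|$ and, more strongly, $\theta_r\mid|H|$. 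Thus $H$ is an irreducible subgroup of $\GamO_d(q)$ of order divisible by a primitive prime divisor of $q^{d-1}-1$, with the exponent $d-1$ close to the dimension $d$, so we are in the range where the classification of \cite{bamberg2008overgroups} (built on \cite{Guralnick1999Linear}) is most restrictive. It returns a short list of candidates for $H$, organised by Aschbacher's geometric classes and the almost simple class $\mathscr S$.

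I would then clear out the geometric classes. Reducibility is excluded by hypothesis. For the extension field, tensor, subfield and extraspecial-normalizer classes, either a primitive prime divisor $t$ of $q^{d-1}-1$ --- which satisfies $t\ge d$ --- cannot divide $|H|$ at all, the available ``cyclotomic'' factors $q^{j}-1$ being too small, or $|H|$ is bounded in terms of $d$ alone, so that $\theta_r\mid|H|$ bounds $q$; either way, only finitely many small pairs $(q,d)$ remain, and none yields a transitive $m$-ovoid of a parabolic orthogonal space. For the extension field class this last point is the content of Section \ref{sec_extField}, which first forces an invariant $m$-ovoid into a prescribed form and then rules it out by the primitive-part refinement together with the Kloosterman-sum bounds of Section \ref{sec:boundsection}. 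The one geometric class that survives is the imprimitive one: the primitive prime divisor forces the $H$-invariant decomposition of $V$ to be into $1$-spaces, so $H\le\GO_1(q)\wr S_d$, a group of order $2^{d}\,d!$; then $\theta_r\mid 2^{d}\,d!$ restricts $(q,d)$ to $q=3$ with $d\in\{5,7\}$, and examining the irreducible subgroups of $\GO_1(3)\wr S_d$ in Magma recovers exactly the $Q(d-1,3)$ cases of Example \ref{exa_Imprimitive}.

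The bulk of the argument is the almost simple case. Write $S\trianglelefteq\bar H\le\Aut S$ with $S$ nonabelian simple and $t\mid|S|$; by \cite{Guralnick1999Linear,bamberg2008overgroups}, $S$ is an alternating group, a sporadic group, or a simple group of Lie type in the natural characteristic $p$ or in a cross characteristic. For each candidate I would use the Frobenius--Schur indicator --- via Lemma \ref{lem_FScircDet}, the embeddings of Section \ref{subsec_FDM} for $A_n$, and the data of \cite{LubeckSmalldegree,Hissrepreofquasi2001} --- to decide which $\tilde S=H^{(\infty)}$ actually carry a nondegenerate invariant quadratic form on an odd-dimensional absolutely irreducible $\F_q$-module, i.e.\ embed in $\GamO_d(q)$ with $d$ odd. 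This already removes the sporadic groups for parabolic orthogonal spaces (the only sporadic example, Example \ref{exa_thmsporadic}, lives in a symplectic space); and imposing $\theta_r\mid|H|$ leaves only $S$ of Lie type in characteristic $p$ --- where $|S|$ grows with $q$, producing the infinite families ${}^2G_2(q)$ and $\PSU_3(q)$ with $p=3$ on $Q(6,q)$ --- together with finitely many small configurations, namely $A_n$ on the fully deleted module (the other small representations of $A_n$ and its covers contributing nothing here) and a handful of cross-characteristic groups over $\F_3$ and $\F_5$. For each small configuration I would compute the $S$- and $\bar H$-orbits on the points of $\cP$ in Magma, retain those unions of orbits meeting every generator in a constant number of points, and use Remark \ref{rem_2Fact}(1) both to detect $M$-transitive-but-not-$S$-transitive $m$-ovoids and to discard $m$-ovoids properly contained in others; this reproduces the $Q(d-1,q)$ rows of Tables \ref{tab_fully}, \ref{tab_otherAlt} and \ref{tab_thmcross}. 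For the two infinite families I would argue geometrically instead, using the Ree spread of the split Cayley hexagon $\mathcal H(q)$ (cf.\ \cite{BambergclassificationTransi,Shultmsystem}) and Kantor's construction of the unitary ovoid together with its companion orbits (cf.\ \cite{KantorOvoids}) to pin down the orbit lengths and verify the $m$-ovoid property, giving the rows of Table \ref{tab_thmNatural} with $\cP=Q(6,q)$. Assembling the survivors from the imprimitive and almost simple classes yields precisely the asserted list.

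The main obstacle is the almost simple analysis. The list of $S$ coming from \cite{bamberg2008overgroups} is long, and for each one must correctly determine all invariant reflexive sesquilinear and quadratic forms and all relevant module dimensions over the right field, so as not to miss an orthogonal embedding; then, for the surviving parameters, one must genuinely verify that a $\tilde S$- or $M$-orbit is an $m$-ovoid --- that it meets \emph{every} generator in $m$ points, not merely that it has the right cardinality --- and enumerate all transitive $m$-ovoids without double-counting via Remark \ref{rem_2Fact}(1); and the two natural-characteristic families require the geometry of the Ree hexagon and of Kantor's ovoids rather than a finite computation.
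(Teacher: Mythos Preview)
Your proposal is correct and follows essentially the same route as the paper: reduce to a primitive-divisor condition, invoke \cite{bamberg2008overgroups}, and case-check the resulting Aschbacher classes. Two small slips to flag. First, in the imprimitive class the relevant overgroup is $\GL_1(q)\wr S_d$ of order $(q-1)^d\,d!$, not $2^d\,d!$; and the divisibility $\theta_r\mid(q-1)^d\,d!$ does \emph{not} restrict to $q=3$ alone---the pair $(q,d)=(5,7)$ also survives (since $5^3+1=126=2\cdot 3^2\cdot 7$ divides $4^7\cdot 7!$), and the paper explicitly checks $Q(6,5)$ in this class before discarding it by Magma. Second, the extension field case for $Q(d-1,q)$ is in fact lighter than you suggest: by Table~\ref{tab_extfieldP'gt0} the only possibility is $\cP'=Q(d/b-1,q^b)$, which Lemma~\ref{lem_extfld2468} kills by the primitive-divisor argument alone, so the Kloosterman machinery of Section~\ref{sec:boundsection} is never needed here (the residual case $\gcd(b,\Phi_e^*(p))>1$ forces $p^e\in\{3^4,3^6\}$ and is disposed of by a short Magma check).
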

\begin{table}[!h]
\centering
\caption{Summary of parameters of transitive $m$-ovoids in $\cP=Q(d-1,q)$, $d$ odd}
\label{tab_sumQ}
\begin{tabular}{cccc}
\toprule
$m$      & $(q,d)$      & Description   & Reference \\ \hline
$2$      & $(3,5)$      & imprimitive   & Example \ref{exa_Imprimitive}  \\ \hline
$1,\cdots,6,8,9,10,12$ & $(3,7)$                 & \begin{tabular}[c]{@{}c@{}}imprimitive,\\ nearly simple\end{tabular} & Examples \ref{exa_Imprimitive}, \ref{exa_Alternating}, \ref{exa_thmCross}  \\ \hline
$6,15,16$   & $(5,7)$      & nearly simple & Example \ref{exa_thmCross}       \\ 
$1,q,q^2$& $(3^f,7)$,\, $f\ge 3$ odd & nearly simple &Example \ref{exa_thmNatural} \\
$1,q+q^2$& $(3^f,7)$    & nearly simple &   Example \ref{exa_thmNatural} \\ \bottomrule
\end{tabular}
\end{table}

\begin{thm}\label{thm_PGamOMinusTransi}
Suppose that $q=p^f$ with $p$ prime, and let $d$ be an even integer with $d\ge 6$ and $(q,d)\neq(2,6)$. Let $\cM$ be an $m$-ovoid of $\cP=Q^{-}(d-1,q)$ with a transitive automorphism group $H_0\le\PGamO^{-}_d(q)$ such that its full preimage $H$ in $\Gamma(V):=\GamO^{-}_d(q)$ acts irreducibly on the ambient space $V$. Then we have either of the following cases:
\begin{enumerate}
\item[(a)] $\cM$ is  one of the  $\PSL_{3}(4)$-invariant $2$-ovoids of $Q^-(5,3)$ in Example \ref{exa_thmCross},
\item[(b)] $E:=C_{\textup{End(V)}}(H^{(\infty)})\cong\F_{q^b}$ for a proper divisor $b$ of $d$.
\end{enumerate}
In the case (b), $V$ is a vector space $V'$ over $E$ equipped with a nondegenerate $H^{(\infty)}$-invariant form $\kappa'$. Let $\cP'$ be the associated polar space, and define $\cM_1$, $\cM_{2}$ as in \eqref{eqn_M1def} and \eqref{eqn_sec4cM2def} respectively. Then either $\cM$ is in Examples \ref{exa_Extension}, \ref{exa_FR_M1compl} and \ref{exa_FR_QMtoQM_smallpara}, or it is a $3$-ovoid of $Q^-(5,5)$ with $H^{(\infty)}=3^\cdot A_7$ and $\cP'=H(2,5^2)$.
\end{thm}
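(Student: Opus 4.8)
The plan is to follow the same strategy used for Theorems \ref{thm_PGamUTransi} and \ref{thm_PGamOTransi}: invoke the classification in \cite{bamberg2008overgroups} of subgroups of finite classical groups whose order is divisible by a primitive prime divisor of $q^e-1$, where $e=d$ is the value attached to $Q^-(d-1,q)$ in Table \ref{tab_valuede}. First I would observe that since $\cM$ admits a transitive group $H_0$, the ovoid number $\theta_r=q^{d/2}+1$ divides $|H_0|$, so a primitive prime divisor $t$ of $q^{d}-1$ (which exists by Lemma \ref{lem_zsigmondy} since $(q,d)\ne(2,6)$) divides $|H|$; here one uses that $q^{d/2}+1$ is the largest divisor of $q^d-1$ coprime to $q^{d/2}-1$, so it captures the primitive part $\Phi_d^*(q)$. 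Then $H$, acting irreducibly on $V$, falls into one of the Aschbacher-type classes enumerated in \cite{bamberg2008overgroups}: the extension field examples ($\mathscr{C}_3$-type), the imprimitive/tensor cases, or the nearly simple ($\mathscr{S}$-type) cases.

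Next I would dispose of the nearly simple cases. By \cite{bamberg2008overgroups}, the candidates for $H^{(\infty)}$ are severely limited; one goes down the list and for each pair $(S,\,(q,d))$ with $S$ quasi-simple and $q^{d}-1$ divisible by the relevant primitive prime divisor, one checks whether $V$ carries a nondegenerate $S$-invariant quadratic form of minus type (using the Frobenius-Schur indicator data, Lemma \ref{lem_FScircDet}, and the references \cite{LubeckSmalldegree,Hissrepreofquasi2001}), and then determines the $S$-orbits on the points of $Q^-(d-1,q)$ by direct computation in Magma. This is how case (a), the $\PSL_3(4)$-invariant $2$-ovoids of $Q^-(5,3)$, and the exceptional $3$-ovoid of $Q^-(5,5)$ with $H^{(\infty)}=3^{\cdot}A_7$ and $\cP'=H(2,5^2)$ arise; one must also rule out all remaining quasi-simple candidates by checking that no proper union of $S$-orbits (stabilized by $H$) is an $m$-ovoid, invoking the facts in Remark \ref{rem_2Fact}. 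The imprimitive and tensor-product classes are excluded for $Q^-$ by the arguments in the paper's Section \ref{sec:mainresults} (dimension/type constraints from Table \ref{tab_extfieldP'gt0}).

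For the remaining case (b) — the extension field case — the structure theorem gives $E:=C_{\mathrm{End}(V)}(H^{(\infty)})\cong\F_{q^b}$ for a proper divisor $b\mid d$, whence $V$ becomes a $d/b$-dimensional $\F_{q^b}$-space $V'$ carrying a nondegenerate $H^{(\infty)}$-invariant form $\kappa'$; by Table \ref{tab_extfieldP'gt0} the type of $\cP'=$ (the polar space of $\kappa'$) is one of $Q^-$, $H$, or $Q$, corresponding to lines 3, 9, and 12. Here $H^{(\infty)}\ge\Omega(V',\kappa')$ up to the scalar action, and the set $\cM$, being $H$-invariant, is a union of $\Omega(V',\kappa')$-orbits on the points of $\cP$; the $\F_q$-points falling on a given $\F_{q^b}$-point of $\cP'$ form the building blocks. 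The main work is then to invoke the exponential-sum analysis of Section \ref{sec:boundsection}: for $m$-ovoids of the form $\cM_1$ or $\cM_2$ (as in \eqref{eqn_M1def} and \eqref{eqn_sec4cM2def}) one uses Kloosterman sum estimates to bound the parameters, forcing $\cM$ into Examples \ref{exa_Extension}, \ref{exa_FR_M1compl}, or the small-parameter family \ref{exa_FR_QMtoQM_smallpara}; outside those forms no $\Omega(V',\kappa')$-invariant $m$-ovoid exists.

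The hard part will be the extension field case (b): one must show that \emph{every} $H^{(\infty)}$-invariant $m$-ovoid is accounted for, which requires (i) classifying the $\Omega(V',\kappa')$-orbits on $\F_q$-points of $\cP$ precisely — this is delicate because a single $\F_{q^b}$-point of $\cP'$ may split into several $\F_q$-points lying in different orbits of $\Omega(V',\kappa')$ depending on the norm behavior of the scalar, cf.\ the discussion in Examples \ref{exa_FR_PSU3_QPlus7} and \ref{exa_FR_M1compl} — and (ii) the Kloosterman bound of Section \ref{sec:boundsection} is only strong enough for $b$ large relative to $q$, so the finitely many small cases (the source of $q\in\{2^2,2^6\}$-type exceptions in the Hermitian theorem and of Example \ref{exa_FR_QMtoQM_smallpara}) must be settled by a separate, computer-assisted search, and one must be careful that the sporadic $3^{\cdot}A_7$ on $Q^-(5,5)$ — which is genuinely a nearly-simple phenomenon rather than an extension-field one despite $\cP'=H(2,5^2)$ appearing — is not double-counted or missed. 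Packaging all of these cleanly into the dichotomy (a)/(b) stated in the theorem is where the bookkeeping is most error-prone.
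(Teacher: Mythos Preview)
Your high-level strategy is right, but there is a genuine gap in how you handle the extension field case, and it is exactly where the $3^{\cdot}A_7$ example lives.

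You assert that in case (b) one has ``$H^{(\infty)}\ge\Omega(V',\kappa')$ up to the scalar action'', and you then describe the $3^{\cdot}A_7$ example as ``genuinely a nearly-simple phenomenon rather than an extension-field one''. Both claims are wrong. The nearly simple case of Section \ref{sec:mainresults} requires that $\tilde{S}$ act \emph{absolutely irreducibly} on $V$ over $\F_q$; but $3^{\cdot}A_7$ on $\F_5^6$ is not absolutely irreducible --- the centralizer of the image in $\textup{End}(V)$ is $\F_{25}$, and the action factors through $\GU_3(5)$. So this example does not appear in the nearly simple list at all; it must be caught inside the extension field analysis. The paper does this by a \emph{second} application of \cite[Theorem 3.1/4.1]{bamberg2008overgroups}, now to $K:=H\cap\Delta(V',\kappa')\le\GL_{d/b}(q^b)$: Lemmas \ref{lem_K_primdiv}--\ref{lem_Kirr} show $K$ is irreducible on $V'$ with $\Phi_{df}^*(p)\mid|K|$, and the classification over $\F_{q^b}$ then yields either $K^{(\infty)}=\Omega_{d/b}^-(q^b)$ (subcase (a)) or $K^{(\infty)}=\SU_{d/b}(q^{b/2})$ (subcase (b)), \emph{or} one of the six sporadic quasi-simple groups in Table \ref{tab_Thm4.1Spor}. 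The $3^{\cdot}A_7$ with $(d/b,q^b)=(3,5^2)$ is one of these sporadic entries, and a Magma check shows it alone among them yields a transitive $m$-ovoid. Your proposal, by assuming $H^{(\infty)}\ge\Omega(V',\kappa')$, would miss this entire sporadic branch.

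Two smaller points: (i) line 12 of Table \ref{tab_extfieldP'gt0} (the $Q\to Q^-$ reduction) is not a live possibility --- it is excluded by the divisibility argument of Lemma \ref{lem_extfld2468}, so only $\cP'\in\{Q^-,H\}$ survive; (ii) the case $b=d$ (so $d/b=1$, $H\le\Gamma\textup{U}_1^{\#}(q^{d/2})$) is not covered by the irreducibility argument for $K$ on $V'$ and must be treated separately via the bound of Theorem \ref{thm_Bboundm}, the congruence of Theorem \ref{thm_modulareqn}, and a finite search --- the paper does this at the start of case (b) in Section \ref{subsec_II_QM}. Once these are in place, the Kloosterman-sum bounds (Propositions \ref{prop_Ellip_Tbound} and \ref{prop_EllipHerm_Tbound}) together with the divisibility $|T|\epsilon\mid bf$ reduce the generic extension field cases to the finite computer search that produces Examples \ref{exa_Extension}, \ref{exa_FR_M1compl}, \ref{exa_FR_QMtoQM_smallpara}, exactly as you outlined.
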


\begin{table}[!h]
\centering
\caption{Summary of parameters of transitive $m$-ovoids in $\cP=Q^{-}(d-1,q)$, $(q,d)\neq(2,6)$}
\label{tab_sumQM}
\begin{tabular}{cccc}
\toprule
$m$      & Conditions     & Description   & Reference \\ \hline
$2$	&   $(q,d)=(3,6)$     & nearly simple & Example \ref{exa_thmCross}\\
$3$ &$b=2$,\, $\cP'=H(2,5^2)$& field reduction & Theorem \ref{thm_PGamOMinusTransi}(b)\\
$\frac{q^{d/2-b}-1}{q-1}$& $d\geq6$, $1<d/b<d$ &$\cM_1$& Example \ref{exa_Extension}\\
$q^{d/2-2}$ & $b=2$, $d/2\geq 4$ even & $\cP\setminus\cM_1$ & Example \ref{exa_FR_M1compl}\\
$bfq^{d/2-b}$ & $(p^f,b)\in\{(2,3),(2^3,3)\}$ &$\cP\setminus\cM_{1}$&Example \ref{exa_FR_QMtoQM_smallpara}\\
$2^{d/2-2}$& $(q,b)=(2,4)$, $d/4 \ge 4$  &$\cP\setminus \cM_{2}$& Example \ref{exa_FR_QMtoQM_smallpara}\\\bottomrule	
\end{tabular}
\end{table}

\begin{thm}\label{thm_PGamOPlusTransi}
Suppose that $q=p^f$ with $p$ prime  and $d$ is even with $d\ge 6$. Let $\cM$ be an $m$-ovoid of $Q^{+}(d-1,q)$ with a transitive automorphism group $H_0\le\PGamO^{+}_d(q)$ whose full preimage $H$ in $\Gamma(V)=\GamO^{+}_d(q)$ acts irreducibly on the ambient space $V$. Then it is either in Examples \ref{exa_Imprimitive}, \ref{exa_Alternating}, \ref{exa_thmCross}, \ref{exa_thmNatural} and \ref{exa_Q+72}, or  $E:=C_{\textup{End(V)}}(H^{(\infty)})\cong\F_{q^2}$ with $d/2$ odd. In the last case, $V$ is a vector space $V'$ over $E$ equipped with a nondegenerate $H^{(\infty)}$-invariant quadratic form $\kappa'$ on $V'$. Let $\cP'$ be the associated polar space, and define $\cM_1$ as in \eqref{eqn_M1def}. Then it is in  either of Examples \ref{exa_Extension}, \ref{exa_FR_PSU3_QPlus7} and \ref{exa_FR_M1compl}.
\end{thm}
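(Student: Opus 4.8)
The plan is to exploit the transitivity of $H_0$ on $\cM$ to force $|H|$ to be divisible by a large primitive prime divisor, and then to invoke the classification of such subgroups of $\GamO_d^+(q)$ from \cite{bamberg2008overgroups}, running through the possible Aschbacher classes one by one. Since $H_0$ is transitive on $\cM$ and $|\cM|=m\theta_r$ with $\theta_r=q^{d/2-1}+1$, the ovoid number $\theta_r$ divides $|H_0|$, hence $|H|$. If $(q,d)=(2,8)$ then $\cP=Q^+(7,2)$ and $\cM$ is one of the $m$-ovoids enumerated in Example \ref{exa_Q+72}. Otherwise $(q,d-2)\ne(2,6)$ and $d-2\ge 4$, so by Lemma \ref{lem_zsigmondy} the number $q^{d-2}-1$ has a primitive prime divisor $t$; since $d/2-1<d-2$ we have $t\mid q^{d/2-1}+1=\theta_r$, whence $t\mid|H|$. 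Thus $H$ is an irreducible subgroup of $\GamO_d^+(q)$ whose order is divisible by a primitive prime divisor of $q^{e}-1$ with $e=d-2=\dim V-2$, exactly the range treated in \cite{bamberg2008overgroups}. I would then separate two cases according to whether $H^{(\infty)}$ acts absolutely irreducibly on $V$, i.e.\ whether $E:=C_{\textup{End}(V)}(H^{(\infty)})$ equals $\F_q$ or a proper extension $\F_{q^b}$ with $b>1$, $b\mid d$.

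Suppose first $E=\F_q$, so $H^{(\infty)}$ is absolutely irreducible, and go through the Aschbacher classes for $H$ supplied by \cite{bamberg2008overgroups}. Class $\mathscr{C}_1$ is excluded by irreducibility. For the tensor classes $\mathscr{C}_4$ and $\mathscr{C}_7$, matching $\dim V-2$ against the dimensions of the tensor factors forces $\dim V\le 4$, contrary to $d\ge 6$, so these do not occur. Classes $\mathscr{C}_5$ (subfield) and $\mathscr{C}_6$ (normalizer of a symplectic-type group) survive only for a bounded set of parameters once $\theta_r\mid|H|$, and a direct check with Magma produces no $m$-ovoids there. In class $\mathscr{C}_2$ (imprimitive) the divisor condition forces $1$-dimensional blocks, $d-1$ prime, and a bounded range of $q$; the resulting transitive $m$-ovoids are precisely those of Example \ref{exa_Imprimitive}. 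In class $\mathscr{S}$, $H^{(\infty)}$ is quasisimple, and the list in \cite{bamberg2008overgroups} of quasisimple groups whose order is divisible by a primitive prime divisor of $q^{d-2}-1$ and which have an absolutely irreducible orthogonal module of $+$ type and dimension $d$ consists of the fully deleted permutation modules of $A_n$ (Section \ref{subsec_FDM}) together with finitely many further families, all in bounded dimension (covers of $A_n$, the sporadic $2^{\cdot}J_2$-example over $\F_5$, various cross-characteristic groups over $\F_5$ and $\F_2$, and the natural-characteristic groups $\PSL_2(q^3)$ and $\PSU_3(q)$ in $\Omega_8^+(q)$). For each candidate $H^{(\infty)}$ I would compute its orbits on $\cP$ and decide which unions of them are $m$-ovoids, using Magma for the small-parameter cases and the explicit models of Section \ref{sec:MainResults} for the infinite families; this yields exactly Examples \ref{exa_Alternating}, \ref{exa_thmCross} and \ref{exa_thmNatural}, the verification that no further union of orbits works (e.g.\ that $O_2$ contains no proper $\PSU_3(q)$-invariant $m$-ovoid on $Q^+(7,q)$, by \cite{FengLT}) being part of the same analysis.

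Now suppose $E\cong\F_{q^b}$ with $b>1$. Then $H$ normalizes $E$ and lies in the $\mathscr{C}_3$-configuration: $V$ becomes a $d/b$-space $V'$ over $E$ carrying a nondegenerate $H^{(\infty)}$-invariant form $\kappa'$, whose associated polar space $\cP'$ is one of the rows of Table \ref{tab_extfieldP'gt0}. The primitive divisor $t$ of $q^{d-2}-1$ divides $|\Omega(V',\kappa')|$, whose order is a product of factors $q^{bi}\mp 1$ with $i\le d/(2b)$; since $q$ has multiplicative order $d-2$ modulo $t$, this forces $2bi=d-2$ for some $i$, and together with $b\mid d$ (so $b\mid\gcd(d,d-2)=2$) we obtain $b=2$ and $d\equiv2\pmod{4}$, i.e.\ $d/2$ odd. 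Inspecting the rows of Table \ref{tab_extfieldP'gt0} with $\cP=Q^+(d-1,q)$ and $d/b$ odd leaves only row $11$: $q$ is odd and $\cP'=Q(d/2-1,q^2)$. It remains to determine which $H^{(\infty)}$-invariant subsets of $\cP$ are $m$-ovoids. One such is $\cM_1$ of \eqref{eqn_M1def}, a $\tfrac{q^{d/2-1}-1}{q-1}$-ovoid (Example \ref{exa_Extension}); for the complementary part one uses the exponential-sum and Kloosterman-sum estimates of Section \ref{sec:boundsection} to show that the only $m$-ovoid which is a union of $\Omega(V',\kappa')$-orbits lying in $\cP\setminus\cM_1$ is $\cP\setminus\cM_1$ itself, a $q^{d/2-1}$-ovoid (Example \ref{exa_FR_M1compl}); when $H^{(\infty)}$ is a proper subgroup of $\Omega(V',\kappa')$, one analyses the induced action on $\cP'=Q(d/2-1,q^2)$ via the classification of transitive $m$-ovoids of parabolic quadrics (Theorem \ref{thm_PGamOTransi}), which, together with the explicit orbit computation of \cite{KantorOvoids} for the action of $\PSU_3(q)$ on $Q(6,q^2)$, produces the example of Example \ref{exa_FR_PSU3_QPlus7} and nothing else.

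I expect the field-extension case to be the main obstacle: one must control \emph{every} $H^{(\infty)}$-invariant union of point-orbits contained in $\cP\setminus\cM_1$, not merely the obvious one, and excluding the exotic candidates is precisely what forces the Kloosterman-sum machinery of Section \ref{sec:boundsection}; a secondary difficulty is the volume of finite Magma verification required in the class-$\mathscr{S}$ and class-$\mathscr{C}_2,\mathscr{C}_5,\mathscr{C}_6$ cases.
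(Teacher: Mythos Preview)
Your overall plan matches the paper's: force a primitive prime divisor of $q^{d-2}-1$ into $|H|$, invoke \cite{bamberg2008overgroups}, and run through the resulting case list. The absolutely irreducible part is essentially right; the paper organises it as classical/imprimitive/symplectic-type/nearly-simple following \cite[Theorem~3.1]{bamberg2008overgroups} rather than by Aschbacher class, so in particular there is no separate $\mathscr{C}_5$ case to check.

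The extension-field analysis, however, has three concrete problems. First, your derivation of $b=2$ and $d/2$ odd assumes $|\Omega(V',\kappa')|$ factors through terms $q^{bi}\mp1$ with $i\le d/(2b)$; this shape is correct only for orthogonal $\kappa'$. Row~10 of Table~\ref{tab_extfieldP'gt0} also permits $\cP'=H(d/b-1,q^b)$ with $d/b$ even, where the factors are $q^{bi/2}\mp1$, and with $b=2$ this yields $d/2$ \emph{even}, a branch you have not treated. The paper eliminates it in case~(b) of Section~\ref{subsec_II_QP} by applying \cite[Theorem~4.2]{bamberg2008overgroups} to $K\le\GL(V')$ and checking the short list of Table~\ref{tab_Thm42Spor} with Magma, leaving only $(q,d)=(2,8)$. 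Second, the Kloosterman-sum machinery of Section~\ref{sec:boundsection} is not available for $Q^+$: that section treats only $H(d-1,q)$ with $d$ odd, $Q^-(d-1,q)$ and $W(d-1,q)$. In case~(a1) the paper instead observes that $\Omega_{d/2}(q^2)$ has exactly two orbits on $\cP\setminus\cM_1$, interchanged by the similarity $v\mapsto\mu v$ with $\mu\in\F_{q^2}^*$ of order $2(q-1)$; the orbits therefore have equal size $\tfrac12 q^{d/2-1}\theta_r$, and since $q$ is odd neither alone is an $m$-ovoid. Third, invoking Theorem~\ref{thm_PGamOTransi} for proper $H^{(\infty)}<\Omega(V',\kappa')$ does not apply as stated: transitivity on $\cM\subseteq\cP$ says nothing about $H^{(\infty)}$ stabilising an $m'$-ovoid of $\cP'$. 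The paper instead reapplies \cite[Theorem~3.1]{bamberg2008overgroups} to $K$ acting on $V'$, obtaining $K^{(\infty)}\in\{\Omega_{d/2}(q^2),\,\PSU_3(q),\,G_2(q^2)\}$ (the last two only for $d=14$), and handles the two exceptional groups via the explicit orbit structures in \cite{KantorOvoids} and \cite{giudici2020subgroups}.
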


\begin{table}[!h]
\centering
\caption{Summary of parameters of transitive $m$-ovoids in $\cP=Q^{+}(d-1,q)$, $d\geq 6$}
\label{tab_sumQM}
\begin{tabular}{c|c|cc}
\toprule
$m$      & Conditions     & Description   & Reference \\ \hline
 $8,16,24,32$& $(q,d)=(3,8)$ & imprimitive &  Example \ref{exa_Imprimitive}\\\hline
$7,21,28,99$ & $(q,d)=(2,14)$ & nearly simple & Example \ref{exa_Alternating}\\\hline
$1,4,5,6,10,20,30,36$&\multirow{2}*{$(q,d)=(5,8)$} & \multirow{2}*{nearly simple} & \multirow{2}*{Examples \ref{exa_Alternating},\ref{exa_thmCross}}\\
$40,45,60,96,100,120$ &&& \\\hline
$1,q+q^2+q^3$ &$p=2$,\, $d=8$& nearly simple &  Example \ref{exa_thmNatural}\\\hline
$1,q+q^2,q^3$	& $q\equiv2\pmod3$,\, $d=8$ & nearly simple &  Example \ref{exa_thmNatural}\\\hline
	$\frac{q^{d/2-1}-1}{q-1}$	& $b=2$,\, $d/b\geq3$ odd & $\cM_1$ &  Example \ref{exa_Extension}\\\cline{1-1}\cline{3-4}
	$q^{d/2-1}$&  $\cP'=Q(d/2-1,q^2)$ &$\cP\setminus\cM_{1}$  &Example \ref{exa_FR_M1compl}\\\hline
	$q+1, q^6,q^2(q^2+1)(q+1)$	&$p=3$,\, $d=14$& field reduction&Example \ref{exa_FR_PSU3_QPlus7}\\\hline
	$1,2,3,4,6,8,9,12,14$& $(q,d)=(2,8)$ & - & Example \ref{exa_Q+72}\\\bottomrule
\end{tabular}
\end{table}

\begin{thm}\label{thm_PGamSpTransi}
	Suppose that $q=p^f$ with $p$ prime and let $d$ be an even integer with $d\ge 4$ and $(q,d)\neq(2,6)$. Let $\cM$ be an $m$-ovoid of $\cP=W(d-1,q)$ with a transitive automorphism group $H_{0}\leq\PGamSp_{d}(q)$ whose full preimage $H$ in $\Gamma(V):=\GamSp_{d}(q)$ acts irreducibly on ambient space $V$. Then there are three cases:
\begin{enumerate}
\item[(a)] $\cM$ is in either of Examples \ref{exa_Classical}, \ref{exa_Symplectic}, \ref{exa_thmsporadic}, \ref{exa_thmCross}, \ref{exa_thmNatural};
\item[(b)] $H\le \Gamma\textup{U}_1(q^{d/2})$ with $d=4$, or  $(p,d,f,m)=(2,6,8,24),(2,6,9,27)$;
\item[(c)]$E:=C_{\textup{End(V)}}(H^{(\infty)})\cong\F_{q^b}$ for a proper divisor $b$ of $d$.
\end{enumerate}
In the  case (c), $V$ is a vector space $V'$ over $E$ equipped with a nondegenerate $H^{(\infty)}$-invariant form $\kappa'$. Let $\cP'$ be the associated polar space, and define $\cM_1$, $\cM_{2}$ as in \eqref{eqn_M1def} and \eqref{eqn_sec4cM2def} respectively. Then $\cM$ is either in one of  Examples \ref{exa_Extension}, \ref{exa_FR_W_Sz}, \ref{exa_FR_M1compl}, \ref{exa_FR_HtoW_smallpara} and \ref{exa_FR_QMtoW_smallpara}, Theorems \ref{thm_II_HtoWcons} (2) and \ref{thm_II_Wqevenb2cons},
or it is one of the following cases:
	\begin{enumerate}
		\item[(c1)] $\cP'=H(2,5^2)$, $H^{(\infty)}=3^\cdot A_7$, $m=6$;
		\item[(c2)] $\cP'=H(2,3^2)$, $H^{(\infty)}=2^{\cdot}\textup{PSL}_2(7)$, $m=4$.
	\end{enumerate}
\end{thm}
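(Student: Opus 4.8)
The plan is to convert the transitivity hypothesis into a divisibility condition, apply the classification of subgroups of classical groups whose order is divisible by a primitive prime divisor from \cite{bamberg2008overgroups}, and then dispose of the Aschbacher classes one at a time. Since $H$ acts transitively on $\cM$, the orbit--stabiliser theorem gives $|\cM|\mid|H|$, and by Table \ref{tab_valuede} one has $|\cM|=m(q^{d/2}+1)$, so $q^{d/2}+1$ divides $|H|$. As $d\ge 4$ is even and $(q,d)\neq(2,6)$, Lemma \ref{lem_zsigmondy} yields a primitive prime divisor $t$ of $q^d-1$; since $t\nmid q^{d/2}-1$ we get $t\mid q^{d/2}+1$, hence $t\mid|H|$ and, by Cauchy's theorem, $H$ contains an element of order $t$. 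Therefore $H$ is one of the subgroups of $\GamSp_d(q)$ whose order is divisible by a primitive prime divisor of $q^d-1$, and these are listed, arranged by Aschbacher class, in \cite{bamberg2008overgroups} (which rests on \cite{Guralnick1999Linear}). The assumption that $H$ acts irreducibly on $V$ removes the reducible class $\mathcal{C}_1$ immediately, and I would then go through the remaining classes.

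For the classes $\mathcal{C}_2$ (imprimitive), $\mathcal{C}_4$ and $\mathcal{C}_7$ (tensor and tensor-induced), $\mathcal{C}_5$ (subfield), $\mathcal{C}_6$ (normaliser of a symplectic-type $r$-group) and $\mathcal{C}_8$ (the subgroups $\textup{O}^{\pm}_d(q)\le\Sp_d(q)$ for $q$ even), the condition $q^{d/2}+1\mid|H|$ together with the lists of \cite{bamberg2008overgroups} leaves only finitely many triples $(q,d,H)$; for each I would use the elementary bound $m<\frac{q^{d/2}-1}{q-1}$ and an explicit Magma \cite{Magma} computation of the point orbits of the candidate groups. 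The class $\mathcal{C}_8$ yields the classical examples of Example \ref{exa_Classical} (for $q$ even an $m$-ovoid of $Q^-(d-1,q)$ is one of $W(d-1,q)$, Remark \ref{rem_2Fact}(2)); the class $\mathcal{C}_6$ yields the extraspecial example on $W(3,3)$ of Example \ref{exa_Symplectic}; and the residual near-cyclic and small exceptional entries leave exactly the degenerate field-extension configuration $H\le\GamU_1(q^{d/2})$ for $d=4$ and the two parameter sets $(p,d,f,m)=(2,6,8,24),(2,6,9,27)$ recorded in case (b).

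In the class $\mathcal{S}$, the subgroup $\tilde S:=H^{(\infty)}$ is quasi-simple and acts absolutely irreducibly with $S=\tilde S/Z(\tilde S)\le H_0\le\Aut(S)$, and the possibilities fall into a finite list --- covering groups of alternating groups acting on the fully deleted module (Subsection \ref{subsec_FDM}) or on their other small-degree representations, $2^\cdot J_2$, and small cross-characteristic groups of Lie type --- together with the generic natural-characteristic families $\textup{Sz}(q)$, ${}^2G_2(q)$, $\PSU_3(q)$ and $\PSL_2(q^3)$. For the finite list I would, candidate by candidate, enumerate the $\tilde S$- and $M$-orbits on the points of $W(d-1,q)$ with Magma, test which unions of orbits meet every generator in a constant number of points, and match the survivors against Examples \ref{exa_thmsporadic}, \ref{exa_Alternating} and \ref{exa_thmCross}; the Frobenius--Schur indicator data, with Lemma \ref{lem_FScircDet} settling the ambiguous case, is needed here to decide when an absolutely irreducible module over $\F_q$ or $\F_{q^2}$ carries the required symplectic or unitary form. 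For the generic families I would instead invoke the classical geometric descriptions of their ovoid-related orbits and complements from \cite{KantorOvoids}, \cite{LuneburgTranslation} and \cite{BambergclassificationTransi}, which produce precisely the entries of Example \ref{exa_thmNatural}; together with Example \ref{exa_Classical} this completes case (a).

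The remaining, and principal, case is the field-extension class $\mathcal{C}_3$: here $E:=C_{\textup{End}(V)}(H^{(\infty)})\cong\F_{q^b}$ for a proper divisor $b$ of $d$, the space $V$ becomes a $(d/b)$-dimensional $E$-space $V'$ carrying an $H^{(\infty)}$-invariant nondegenerate form $\kappa'$, and $H\le\Gamma^{\#}(V',\kappa')$. By the symplectic rows of Table \ref{tab_extfieldP'gt0} (rows $1$, $7$ and $8$) the form $\kappa'$ is symplectic or Hermitian, the orthogonal possibility for $q$ even being absorbed by Remark \ref{rem_2Fact}(2); one then subdivides further according to the Aschbacher class of $H^{(\infty)}$ inside the classical group over $E$, which is how the quasi-simple exceptions enter. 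Now $\cM$ is a union of $H^{(\infty)}$-orbits on $\cP$, with $\cM_1$ of \eqref{eqn_M1def} the smallest natural one, and the crux is to show that the only such unions which can be $m$-ovoids are $\cM_1$, its complement, $\cM_2$ or $\cP\setminus\cM_2$ of \eqref{eqn_sec4cM2def} (in the sub-case $q$ even, $b=2$), and the two small exceptions (c1) and (c2). I would establish this with the exponential-sum machinery of Section \ref{sec:boundsection}: the intersection numbers of a putative $\cM$ of this form with the generators of $\cP$ can be written as character sums over $\F_{q^b}$, and the Kloosterman-sum estimates there confine $m$ to a short interval; a finite case analysis then disposes of all but the smallest values of $q^b$, for which a direct computation --- again using the Frobenius--Schur indicator data --- isolates the unitary modules $3^\cdot A_7<\GU_3(5)$ with $\cP'=H(2,5^2)$ and $2^\cdot\PSL_2(7)<\GU_3(3)$ with $\cP'=H(2,3^2)$ as the only exceptions and identifies everything else with Examples \ref{exa_Extension}, \ref{exa_FR_W_Sz}, \ref{exa_FR_M1compl}, \ref{exa_FR_HtoW_smallpara} and \ref{exa_FR_QMtoW_smallpara} and Theorems \ref{thm_II_HtoWcons}(2) and \ref{thm_II_Wqevenb2cons}. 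I expect the main obstacle to be the sharpness of the Kloosterman-sum bound near the endpoints of the admissible range for $m$: that is where Section \ref{sec:boundsection} does the genuine work, and where the new infinite families announced in the abstract arise.
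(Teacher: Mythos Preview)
Your approach is essentially the same as the paper's: reduce to groups with a primitive prime divisor of $q^d-1$ via \cite{bamberg2008overgroups}, then case-split (classical, imprimitive, symplectic-type, nearly simple, extension field) and use the Kloosterman bounds of Section~\ref{sec:boundsection} to finish the extension field case. Two points of alignment to correct: first, case~(b) of the theorem is not a residual near-cyclic entry outside $\mathcal{C}_3$ but arises \emph{inside} the extension field analysis with $b=d$ (Section~\ref{subsec_II_W}, case~(b), $d=b$), where the two undetermined tuples $(2,6,8,24),(2,6,9,27)$ are simply the parameter sets beyond computational reach; second, the orthogonal sub-case is not ``absorbed'' by Remark~\ref{rem_2Fact}(2) but appears when $\cP'=W(d/b-1,q^b)$ and the recursive application of \cite{bamberg2008overgroups} to $K\le\GL(V')$ yields $K^{(\infty)}=\Omega^-_{d/b}(q^b)$ --- this is precisely where Theorem~\ref{thm_II_Wqevenb2cons} and Example~\ref{exa_FR_QMtoW_smallpara} come from, and your sketch should make clear that the $\Omega^-$-invariant quadratic form $Q'$ (not the ambient symplectic $\kappa'$) drives that analysis.
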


\begin{table}[!h]
\centering
\caption{Summary of parameters of transitive $m$-ovoids in $\cP=W(d-1,q)$, $(q,d)\neq(2,6)$}
\label{tab_sumW}
\begin{tabular}{c|c|c|c}
\toprule
$m$      & Conditions     & Description   & Reference \\ \hline
$\frac{q^{d/2-1}-1}{q-1}$  &\multirow{2}*{$q$ even} & $Q^{-}(d-1,q)$ &\multirow{2}*{Example \ref{exa_Classical}}\\\cline{1-1} \cline{3-3}
$q^{d/2-1}$ & &$\cP\setminus Q^{-}(d-1,q)$ & \\\hline
	$2$	& $(q,d)=(3,4)$ &symplectic type &  Example \ref{exa_Symplectic}\\\hline
	$15,16$	& $(q,d)=(5,6)$ & nearly simple & Example \ref{exa_thmsporadic}\\\hline
	$6,9$ & $(q,d)=(2,8)$ & nearly simple  & Example \ref{exa_thmCross}\\\hline
	$12$  & $(q,d)=(2,12)$  & nearly simple  & Example \ref{exa_thmCross}\\\hline
	$1,q$	& $q=2^f$,$f\geq3$ odd & nearly simple & Example \ref{exa_thmNatural}\\\hline
\multirow{2}{*}{$\frac{q^{(d-b)/2}-1}{q-1}$}	& $d/b\geq3$ odd,\, $b>1$,  & \multirow{2}{*}{$\cM_1$} & \multirow{2}{*}{Example \ref{exa_Extension}}\\
& $\cP'=H(d/b-1,q^b)$  &  &\\\hline
\multirow{2}{*}{$q^{d/2-1}$}&$b=2$,\, $d/b\geq3$ odd, &\multirow{2}{*}{$\cP\setminus\cM_{1}$}& \multirow{2}{*}{Example \ref{exa_FR_M1compl}}\\
&  $\cP'=H(d/2-1,q^2)$&  &\\\hline
$3fq^{d/2-3}$&$p^f\in\{2,2^3\}$  &$Q^{-}(d-1,q)\setminus\cM_{1}$ &Examples \ref{exa_FR_HtoW_smallpara},\ref{exa_FR_QMtoW_smallpara}\\\hline
$2^{d/2-2}$	&$q=2$  &$Q^{-}(d-1,2)\setminus \cM_{2}$& Examples \ref{exa_FR_HtoW_smallpara}, \ref{exa_FR_QMtoW_smallpara}\\\hline
		$\frac{|\mathcal{T}|}{q-1}q^{(d-b)/2}$	 & $b=4$,\, $q$ even &  $\cM=\{\la v\ra_{\F_q}:\,H'(v)\in\mathcal{T}\}$  & Theorem \ref{thm_II_HtoWcons} (2)\\\hline	
	$\frac{|\mathcal{T}|}{q-1}q^{d/2-2}$ & $p=2$, $b=2$ &  $\cM=\{\la v\ra_{\F_q}:\,Q'(v)\in\mathcal{T}\}$ & Theorem \ref{thm_II_Wqevenb2cons}\\\hline
				
\multirow{2}{*}{$q^b\frac{q^b-1}{q-1}$,$\frac{q^b-1}{q-1}$}	&$q=2^f$,\, $f$ odd,&\multirow{2}{*}{ field reduction}&\multirow{2}{*}{Example \ref{exa_FR_W_Sz}}\\
& $d=4b$, $b>1$& &\\\hline	
$6$&$\cP'=H(2,5^2)$&field reduction & Theorem \ref{thm_PGamSpTransi} (c1)\\\hline
$4$&$\cP'=H(2,3^2)$& field reduction &  Theorem \ref{thm_PGamSpTransi} (c2)\\\hline
 \multicolumn{2}{l|}{$d\geq 4$ or $(p,d,f,m)=(2,6,8,24),(2,6,9,27)$} & - & Theorem \ref{thm_PGamSpTransi} (b)\\\bottomrule
\end{tabular}
\end{table}

We summarize the parameters of the transitive $m$-ovoids in Theorems \ref{thm_PGamUTransi}-\ref{thm_PGamSpTransi} in Tables \ref{tab_sumHerm}--\ref{tab_sumW}. It is noteworthy to mention that we obtain some unexpected new infinite families of $m$-ovoids of $W(d-1,q)$ in Theorems \ref{thm_II_HtoWcons} and \ref{thm_II_Wqevenb2cons}. We conclude this section with some comments on the above theorems.

\begin{remark}\label{rem_end_sec4}
Let $q=p^f$ and $d,e$ be as in Table \ref{tab_valuede}.
\begin{enumerate}
\item[(1)] We have excluded the cases with $(p,ef)\ne(2,6)$ from consideration in the above theorems, i.e., $H(3,4)$, $W(5,2)$, $Q^-(5,2)$, $Q^+(7,2)$. In $H(3,4)$, there are exactly two transitive ovoids, two transitive $2$-ovoids and one transitive $3$-ovoid up to projective equivalence, cf. \cite{Brouwer1990,DeBruyn2020}. By  \cite{Ceria2022}, the proper $m$-ovoids of $W(5,2)$ have parameter $m\in\{3,4\}$ and the transitive ones are exactly $Q^-(5,2)$ and its complement, cf. Example \ref{exa_Classical}. The polar space $Q^-(5, 2)$ thus has no proper $m$-ovoids, cf. Remark \ref{rem_2Fact}. The parameters of proper transitive $m$-ovoids in $Q^+(7,2)$ are listed in Example \ref{exa_Q+72}.
\item[(2)] We do not attempt to classify all transitive $m$-ovoids of $W(3,q)$ whose stabilizers lie in $\GamU^\#_1(q^2)$ here, cf. Theorem  \ref{thm_PGamSpTransi} (b). Please refer to \cite{cossidente2008m} for constructions of such transitive $2$-ovoids in $W(3,q)$. Also, we do not consider the transitive $m$-ovoids of $Q^+(3,q)$ due to lack of geometric significance.
\item[(3)] There are two cases with small parameters left open in $W(5,q)$.  They have automorphism groups lying in $\Gamma\textup{U}_1(q^3)$, cf. Theorem \ref{thm_PGamSpTransi} (b).
\end{enumerate}
\end{remark}

\section{Improved bounds in the extension field case}\label{sec:boundsection}
Let $\cP$ be one of the polar spaces: $H(d-1,q)$ with $d$ odd, $Q^-(d-1,q)$ and $W(d-1,q)$. In this section, we study whether certain subsets $\cM$ of $\cP$  are $m$-ovoids or not by exponential sum calculations. We shall derive improved bounds on the parameters by Kloosterman sum estimations, which will be needed for our discussions in the next two sections. In Theorem \ref{thm_II_HtoWcons}, we will obtain new families of $m$-ovoids in $W(d-1,q)$.\medskip

We first introduce some notation that will be used throughout this section. For a property $\chi$, we write $[[\chi]]:=1$ or $0$ according as the property $\chi$ holds or not.  Let $b$ be a divisor of $d$. We write $\tr_{q^b/q}(x):=x+x^q+\cdots+x^{q^{b-1}}$ for the trace function from $\F_{q^b}$ to $\F_q$. For each subfield $\F$ of $\F_{q^b}$, we write $\tr_{\F}$ for the absolute trace function of $\F$, and define $\psi_\F(x)=\exp(\frac{2\pi \sqrt{-1}}{p}\tr_{\F}(x))$ for $x\in\F$. In particular, we write $\psi:=\psi_{\F_q}$ as the canonical additive character of $\F_q$.

Let $V'$ be a $d/b$-dimensional vector space over $\F_{q^b}$ with a nondegenerate sesquilinear bilinear form or quadratic form $\kappa'$. Let $\cP'$ be the associated polar space. We regard $V'$ as a $d$-dimensional vector space $V$ over $\F_q$, and define a form $\kappa$ as in Table \ref{tab_extfieldP'gt0}. In the case $\kappa$ is a quadratic form, set $\f(x,y):=\kappa(x+y)-\kappa(x)-\kappa(y)$, and in the other cases set $\f=\kappa$. For each nonzero vector $u\in V$, the map $u\mapsto u^\perp$ is defined by $u^\perp=\{ v\in V:\,\f(u,v)=0\}$.
For each $a\in V$, define $\psi_a(x):=\psi(\f(a,x))$. Since $\f$ is nondegenerate, each additive character of $(V,+)$ is of the form $\psi_a$ for some $a\in V$. Let $\cM$ be a subset of $\cP$, and define
\begin{equation}\label{eqn_def_D}
	D=\{\lambda v:\,\lambda\in\F_q^*,\la v\ra\in\cM\}.
\end{equation}
For a nonzero vector $a\in V$, we have
\begin{align}\label{eqn_EFCHHpsiDM}
	\psi_{a}(D)=\sum_{\left\langle v\right\rangle \in\cM}\sum_{\lambda\in\F_{q}^{*}}\psi_{a}(\lambda v)=q\cdot |a^{\perp}\cap\cM|-|\cM|.
\end{align}
That is, $\psi_a(D)$ reflects the intersection property of $\cM$ with the hyperplane $a^\perp$.

Suppose that $\cP$ is one of the polar spaces: $H(d-1,q)$ with $d$ odd, $Q^-(d-1,q)$ or $W(d-1,q)$. By definition, $\cM$ is an $m$-ovoid of $\cP$ if and only if for each  $\left\langle a\right\rangle_{\F_{q}}\in\cP$, we have
\begin{equation}\label{eqn_SNEFCHHpsiDM}
	\psi_{a}(D)=\left\{\begin{array}{ll}-q^{d/2}+m(q-1), & \text{if }a\in D,\\m(q-1), &  \text{if } a\notin D.\end{array}\right.
\end{equation}
We shall need the following results on the parameter $m$.
\medspace
\begin{thm}[\cite{BambergTightsets2007},Theorem 13]
\label{thm_Bboundm}
	Let $\cP$ be one of the polar spaces $H(2r,q)$, $Q^-(2r+1,q)$ and $W(2r-1,q)$,  and let $\mathcal{O}$ be an $m$-ovoid of $\cP$. Then $m\ge \ell$, where $\ell$ is as given in the table below.
	\begin{table}[!h]
		\centering

		\begin{tabular}{cc}
			\toprule
			$\cP$ & $\ell$\\	
			\hline
			$H(2r,q)$     & $(-3+\sqrt{9+4q^{r+1/2}})/(2q-2)$ \\
			$Q^-(2r+1,q)$ & $(-3+\sqrt{9+4q^{r+1}})/(2q-2)$   \\
			$W(2r-1,q)$   & $(-3+\sqrt{9+4q^{r}})/(2q-2)$     \\ \bottomrule
		\end{tabular}
	\end{table}
\end{thm}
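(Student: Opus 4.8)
The plan is to exploit the eigenvalue interlacing / counting identity built into equation \eqref{eqn_SNEFCHHpsiDM}. Let $\cP$ be one of $H(2r,q)$, $Q^{-}(2r+1,q)$, $W(2r-1,q)$, so that in all three cases the ovoid number is $\theta:=q^{w}+1$ for the appropriate $w$ (namely $w=r+\tfrac12$, $r+1$, $r$ respectively, so that $q^{w}$ is the quantity $-\psi_a(D)+m(q-1)$ appearing in \eqref{eqn_SNEFCHHpsiDM}), and set $\cM$ an $m$-ovoid with $|\cM|=m\theta$. First I would fix a point $P=\la a\ra_{\F_q}\in\cM$ and count, in two ways, the pairs $(x,y)$ of points of $\cM$ with $x\in P^{\perp}$, $y\notin P^{\perp}$ or some similar weighted incidence; more cleanly, I would use the two intersection numbers $h_1=(m-1)\theta_{r-1}+1$ and $h_2=m\theta_{r-1}$ from the definition of an $m$-ovoid together with the fact (Remark \ref{rem_2Fact}, or \cite[Section 2]{BambergTightsets2007}) that the $h_1$-points inside $P^{\perp}$ again carry the structure of an $(m-1)$-ovoid of the rank-$(r-1)$ residual polar space at $P$.

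The key step is the following positivity argument. Sum the square $\bigl|\psi_a(D)\bigr|^{2}$, or rather the quantity $\psi_a(D)$ itself, over all $a$ in a suitable set, and use \eqref{eqn_EFCHHpsiDM} to convert this into a statement about $\sum_{a}|a^{\perp}\cap\cM|$ and $\sum_a|a^{\perp}\cap\cM|^{2}$. Concretely, over $\la a\ra\in\cM$ the character sum $\psi_a(D)$ equals $-q^{w}+m(q-1)$, and over $\la a\ra\notin\cM$ it equals $m(q-1)$; plugging these into the orthogonality relation $\sum_{a\in V}\psi_a(D)=q\cdot|D|$ forces a linear relation, while plugging them into $\sum_{a}\psi_a(D)\overline{\psi_a(D)}=q^{d}\cdot$ (something nonnegative counting repeated differences in $D$) forces a quadratic relation. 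The quadratic relation, after dividing by $|\cM|$ and using $|\cM|=m\theta=m(q^{w}+1)$, rearranges to an inequality of the shape
\begin{equation*}
(q-1)m^{2}+3m-q^{w}\ \ge\ 0,
\end{equation*}
because the "number of repeated differences" term is a nonnegative integer and dropping it only weakens the inequality in the correct direction. Solving this quadratic in $m$ gives exactly $m\ge\bigl(-3+\sqrt{9+4(q-1)q^{w}}\bigr)/(2q-2)=\ell$ in each of the three rows of the table (checking that $q^{r+1/2}$, $q^{r+1}$, $q^{r}$ are the right values of $q^{w}$ by reading off Table \ref{tab_valuede}).

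The main obstacle is getting the quadratic inequality sharp enough and with the right integer on the right-hand side — that is, correctly identifying the nonnegative quantity that one discards. I would handle this by working with the centered subset $D^{*}=D\cup\{0\}$ or with the difference $|D|-\text{(a combinatorial count)}$ so that the discarded term is manifestly $\ge 0$ (it counts something like the number of ordered pairs of collinear points of $\cM$, or the second moment $\sum_{\la a\ra\in\cP\setminus\cM}\bigl(|a^{\perp}\cap\cM|-m\theta_{r-1}\bigr)^{2}$, which is visibly nonnegative). A secondary technical point is that $\psi_a(D)$ is only controlled by \eqref{eqn_SNEFCHHpsiDM} when $\la a\ra$ is an isotropic/singular point of $\cP$; for non-isotropic $a$ one must either bound the extra terms or restrict the summation to $\cP$ and account for the contribution of non-isotropic hyperplanes separately. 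I expect this bookkeeping — isotropic versus non-isotropic $a$, and the exact value of the truncated nonnegative term — to be where all the real work lies, whereas the final extraction of $\ell$ from the quadratic is routine. Alternatively, and perhaps more cleanly, one can cite the standard spectral fact that an $m$-ovoid is an equitable bipartition of the collinearity graph of $\cP$ with classes of sizes $m\theta$ and $(\theta_{r}-m)\theta/\cdots$, whose quotient matrix has an eigenvalue that must lie in the spectrum of the (strongly regular) point graph; feeding the known eigenvalues $q^{w-1}-1$ and $-(q^{w-1}+1)$ or similar into the interlacing/ratio bound yields the same inequality $(q-1)m^{2}+3m-q^{w}\ge 0$ with less fuss about characters, and this is the route I would actually write up.
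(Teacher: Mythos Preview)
The paper does not prove this theorem at all: it is quoted verbatim from \cite{BambergTightsets2007} and used only as a black box (see the applications in Sections~\ref{subsec_II_Hdood} and~\ref{subsec_II_QM}). So there is nothing in the paper to compare your argument against, and any correct proof you write would already go beyond what the paper supplies.

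That said, your sketch contains a concrete algebraic error. The quadratic you arrive at, $(q-1)m^{2}+3m-q^{w}\ge 0$, has positive root $\bigl(-3+\sqrt{9+4(q-1)q^{w}}\bigr)/(2q-2)$, whereas the table records $\ell=\bigl(-3+\sqrt{9+4q^{w}}\bigr)/(2q-2)$; these are not equal. Working backwards from the tabulated $\ell$, the inequality one actually needs is
\[
(q-1)^{2}m^{2}+3(q-1)m\ \ge\ q^{w},\qquad\text{i.e.}\quad m(q-1)\bigl(m(q-1)+3\bigr)\ \ge\ \theta_r-1,
\]
and this is what the original proof in \cite{BambergTightsets2007} establishes by a short combinatorial count (pick $P\in\cO$, look at the residual $(m-1)$-ovoid in $P^{\perp}/P$, and count lines through $P$ meeting $\cO$ in at least two points). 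Your character-sum route could in principle recover this too, but the part you flag as a ``secondary technical point'' --- the contribution of non-isotropic $a$ to $\sum_a|\psi_a(D)|^2$ --- is exactly where the missing factor of $(q-1)$ is hiding, and it is not minor bookkeeping: equation~\eqref{eqn_SNEFCHHpsiDM} only controls $\psi_a(D)$ for singular $\la a\ra$, and you have not said what happens otherwise. Without that, the Parseval identity does not close up to the inequality you want.
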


\begin{thm}[\cite{Gavrilyuk2021}, Theorem 1.1]\label{thm_modulareqn}
If $\cQ^-(2r+1,q)$ possesses an $m$-ovoid, then $F(m)\equiv 0 \pmod{q+1}$, where \[F(m)=\begin{cases} m^2-m,\quad &\text{if $r$ is odd};\\
		m^2,\quad &\text{if $r$ is even and $q$ is even};\\
		m^2+\frac{q+1}{2}m,\quad &\text{if $r$ is even and $q$ is odd}.
		\end{cases}\]
\end{thm}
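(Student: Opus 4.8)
The plan is to combine the eigenvector reformulation of the $m$-ovoid property with an integrality argument carried out modulo $q+1$. First recast the hypothesis: if $\cM$ is an $m$-ovoid of $\cP=\cQ^-(2r+1,q)$, then with $\theta_r=q^{r+1}+1$ and $\theta_{r-1}=q^{r}+1$ one has $|\cM|=m\theta_r$ and, as in \eqref{eqn_SNEFCHHpsiDM}, $|P^\perp\cap\cM|=(m-1)\theta_{r-1}+1$ or $m\theta_{r-1}$ according as $P\in\cM$ or not; equivalently the characteristic vector of $\cM$ lies in the span of the all-ones vector and the eigenspace of the collinearity graph of $\cP$ for the eigenvalue $-\theta_{r-1}=-(q^{r}+1)$. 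The arithmetic engine is the congruence $q\equiv-1\pmod{q+1}$, whence $q^{j}+1\equiv 1+(-1)^{j}\pmod{q+1}$; in particular $\theta_{r-1}\equiv 0\pmod{q+1}$ when $r$ is odd and $\equiv 2$ when $r$ is even, and more generally the structural constants of $\cP$ (ovoid numbers, numbers of generators through a fixed totally singular subspace, and so on) reduce to $0$ or to a small power of $2$ modulo $q+1$ according to parities. This dichotomy is what produces the split on the parity of $r$; and since $q+1$ is even exactly when $q$ is odd, so that $2$ is then a non-unit modulo $q+1$, one is really forced into a statement living modulo $2(q+1)$, which is the source of the correction term $\tfrac{q+1}{2}m$.

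For the main argument I would induct on the rank $r$, using that the residue of $\cP$ at a point is again an elliptic quadric of one smaller rank. Fix a point $P$ of $\cP$: the quotient $P^\perp/P$ carries an induced elliptic form, giving a $\cQ^-(2r-1,q)$ whose generators correspond bijectively with the generators of $\cP$ through $P$. Projecting $\cM\cap P^\perp$ from $P$ and recording, for each point $\bar x$ of the residue, the number $w_P(\bar x)$ of points of $\cM$ lying on the totally singular line through $P$ that projects to $\bar x$, one obtains a nonnegative integer function with $\sum_{\bar x\in\bar g}w_P(\bar x)$ equal to $m-1$ (if $P\in\cM$) or $m$ (if $P\notin\cM$) on every generator $\bar g$ — a weighted ovoid, of multiplicity $m-1$ resp.\ $m$, of the rank-$(r-1)$ quadric. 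The inductive step then consists of proving the weighted analogue of the congruence and tracking how the parity of the rank flips as one descends, so that the $1+(-1)^{r}$ bookkeeping propagates; the base case is the generalized quadrangle $\cQ^-(5,q)$, which can be handled directly — for instance through its isomorphism with the dual of the Hermitian surface $H(3,q^2)$, under which $m$-ovoids become $m$-covers, or by a double count of incident configurations reduced modulo $q+1$.

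The main obstacle is that all of the first- and second-moment counts are already implied by the eigenvector condition and yield nothing new: the number of incidences between points of $\cM$ and generators, the number of ordered collinear pairs inside $\cM$, and even — since a generalized quadrangle contains no triangles — the number of collinear triples inside $\cM$ are all determined by $m$ and the parameters of $\cP$ alone. A genuinely new integrality constraint must therefore be extracted, either from the way the weighted-ovoid congruence propagates up the rank recursion or from a counting identity peculiar to elliptic quadrics; and the most delicate point is producing the $q$-odd correction $\tfrac{q+1}{2}m$ correctly, which requires upgrading a congruence modulo $q+1$ to one modulo $2(q+1)$ on the relevant sub-count.
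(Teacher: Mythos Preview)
First, note that the paper does not actually prove this statement: Theorem~\ref{thm_modulareqn} is quoted from \cite{Gavrilyuk2021} and invoked as a black box (in Section~\ref{subsec_II_QM}, to prune lists of feasible parameter tuples). There is therefore no ``paper's own proof'' to compare your attempt against.

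As for your proposal on its own terms: what you have written is a strategy together with an honest admission that the crucial step is missing. You correctly observe that the standard moment identities --- the zeroth, first and second moments of $|\ell\cap\cM|$ over totally singular lines $\ell$, or equivalently the eigenvector reformulation --- are all forced by the $m$-ovoid hypothesis and therefore cannot by themselves produce a nontrivial congruence on $m$. You then say that ``a genuinely new integrality constraint must therefore be extracted'' --- but you do not extract it. The induction on rank via point-residues is a sensible scaffold, and the parity bookkeeping $q^{j}+1\equiv 1+(-1)^{j}\pmod{q+1}$ is exactly the right reduction, but the base case $\cQ^-(5,q)$ is only gestured at (``can be handled directly''), and that is precisely where the new idea has to appear: in a generalized quadrangle even the third moment is determined, so one needs an argument that genuinely exploits the elliptic type rather than the bare intersection numbers. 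The same applies to the promised upgrade from a congruence modulo $q+1$ to one modulo $2(q+1)$ when $q$ is odd, which you describe as ``the most delicate point'' without indicating how it is to be done.

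In short, this is a well-informed diagnosis of where the difficulty lies, not a proof; the gap you yourself flag is the entire content of the theorem. Since the present paper does not supply a proof either, you should consult \cite{Gavrilyuk2021} directly for the missing mechanism.
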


We shall also need some results on Kloosterman sums, cf. Chapter 5 of \cite{LidlFF}. Let $\F$ be a subfield of $\F_{q^b}$, and $\psi_\F$ be its canonical additive character. For $a_{1},\,a_{2}\in\F$, the Kloosterman sum $K(\psi_\F,a_{1},a_{2})$ is defined as
\[
K(\psi_\F,a_{1},a_{2})=\sum_{x\in\F^*}\psi_\F(a_{1}x+a_{2}x^{-1}).
\]
If $a_{1}=a_{2}=0$, it equals $|\F|-1$; if exactly one of $a_{1},a_{2}$ is $0$, it equals $-1$. In general, if $a_{1}\ne 0$, then by \cite[Theorem 5.45]{LidlFF} we have
\begin{equation}\label{eqn_Kloos}
	|K(\psi_\F,a_{1},a_{2})|\leq 2\cdot|\F|^{1/2}.
\end{equation}
\vspace{3pt}
\begin{lemma}\label{lem_expsum_Herm'}
	Suppose that $q^b$ is a square, and set $\F=\F_{q^{b/2}}$. Take $\lambda\in\F^*$ and   $a\in\F_{q^b}$. Then we have
	\begin{align*}
		\sum_{x\in\F_{q^b}}\psi_{\F}(\lambda x^{q^{b/2}+1})\psi_{\F_{q^b}}(a^{q^{b/2}}x)
		=-q^{b/2}\psi_\F\left(-a^{1+q^{b/2}}\lambda^{-1}\right).
	\end{align*}
\end{lemma}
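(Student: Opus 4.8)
The plan is to recognize the left-hand side as a twisted Gauss-type sum attached to the norm/trace map $x\mapsto x^{q^{b/2}+1}$ from $\F_{q^b}$ to $\F=\F_{q^{b/2}}$, and to evaluate it by completing the ``square'' in the exponent. First I would write $Q:=b/2$ and note that $x\mapsto x^{q^Q+1}$ is exactly the relative norm $N_{q^b/q^Q}$, which takes values in $\F$; hence $\psi_\F(\lambda x^{q^Q+1})$ really is a function of $x$ valued in the right field. The exponent inside the summand is $\tr_{\F/\F_p}\!\bigl(\lambda x^{q^Q+1}\bigr)+\tr_{\F_{q^b}/\F_p}(a^{q^Q}x)$; using $\tr_{\F_{q^b}/\F_p}=\tr_{\F/\F_p}\circ\,\tr_{q^b/q^Q}$ and $\tr_{q^b/q^Q}(y)=y+y^{q^Q}$, I would rewrite the whole exponent as $\tr_{\F/\F_p}$ of a single Hermitian-type quadratic expression in $x$ over $\F_{q^b}$, namely $\lambda x^{q^Q+1}+a^{q^Q}x+(a^{q^Q}x)^{q^Q}=\lambda x^{q^Q+1}+a^{q^Q}x+a x^{q^Q}$.

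Next I would complete the square with respect to the Hermitian form $(x,y)\mapsto x^{q^Q}y$. Since $\lambda\in\F^*$ so that $\lambda^{q^Q}=\lambda$, one checks the identity
\[
\lambda\,x^{q^Q+1}+a x^{q^Q}+a^{q^Q}x
=\lambda\Bigl(x+a\lambda^{-1}\Bigr)^{q^Q+1}-\lambda^{-1}a^{q^Q+1},
\]
where again $a^{q^Q+1}=N_{q^b/q^Q}(a)\in\F$, so the additive-character factor $\psi_\F(-\lambda^{-1}a^{q^Q+1})$ pulls outside the sum. After the substitution $x\mapsto x-a\lambda^{-1}$ (a bijection of $\F_{q^b}$), the remaining sum is the ``pure'' Gauss sum
\[
\sum_{x\in\F_{q^b}}\psi_\F\bigl(\lambda x^{q^Q+1}\bigr)
=\sum_{x\in\F_{q^b}}\psi_\F\bigl(\lambda\,N_{q^b/q^Q}(x)\bigr).
\]
To evaluate this I would split off $x=0$ (contributing $1$) and, for $x\ne 0$, count fibers of the surjective norm map $N_{q^b/q^Q}\colon\F_{q^b}^*\to\F^*$, each of size $(q^b-1)/(q^Q-1)=q^Q+1$; thus the sum equals $1+(q^Q+1)\sum_{t\in\F^*}\psi_\F(\lambda t)=1+(q^Q+1)(-1)=-q^Q=-q^{b/2}$, using $\lambda\ne 0$ and the standard character sum $\sum_{t\in\F}\psi_\F(\lambda t)=0$. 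Combining the two steps gives exactly $-q^{b/2}\psi_\F\bigl(-a^{1+q^{b/2}}\lambda^{-1}\bigr)$, as claimed.

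The only genuinely delicate point is the algebraic identity for completing the square, which relies crucially on $\lambda$ lying in the subfield $\F$ (so that $\lambda^{q^Q}=\lambda$); without that, cross terms of the form $\lambda^{q^Q}(\cdots)$ would not collapse. Everything else — the trace-compatibility $\tr_{\F_{q^b}/\F_p}=\tr_{\F/\F_p}\circ\tr_{q^b/q^Q}$, the bijectivity of the shift $x\mapsto x-a\lambda^{-1}$, and the fiber count for the norm map — is routine. I would also note explicitly that the stated right-hand side is well-defined since $a^{1+q^{b/2}}\in\F$, so $\psi_\F$ is being applied to an element of its own field. If one prefers, the final ``pure'' Gauss sum can alternatively be evaluated by orthogonality of additive characters of $(\F_{q^b},+)$ after writing $N_{q^b/q^Q}$ in coordinates, but the fiber-counting argument is cleaner and is the route I would take.
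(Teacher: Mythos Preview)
Your proof is correct and in fact more uniform than the paper's own argument. The paper splits into two cases according to the parity of $q$: for $q$ odd it observes that $\psi_\F(y)=\psi_{\F_{q^b}}(2^{-1}y)$ for $y\in\F$ and then cites Coulter's evaluation of Weil sums \cite{weilCoulter}; for $q$ even it chooses an explicit $\F$-basis $\{1,\delta\}$ of $\F_{q^b}$ with $\delta^{q^{b/2}}+\delta=1$, writes $x=u_1+u_2\delta$, and computes the resulting double sum over $u_1,u_2\in\F$ directly using the Frobenius identity $\psi_\F(z^2)=\psi_\F(z)$. Your completion-of-the-square identity $\lambda(x+a\lambda^{-1})^{q^Q+1}=\lambda x^{q^Q+1}+ax^{q^Q}+a^{q^Q}x+\lambda^{-1}a^{q^Q+1}$ holds in every characteristic (the Hermitian ``norm form'' $x\mapsto x^{q^Q+1}$ behaves well under additive shifts even in characteristic $2$, unlike a genuine quadratic form), so you avoid the case split entirely, and the remaining fiber count over the relative norm map is cleaner than either branch of the paper's proof. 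The paper's route buys an external citation for half the work; yours is self-contained and characteristic-free.
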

\begin{proof}
	First consider the case $q$ is odd. We observe that $\psi_{\F}(y)=\psi_{\F_{q^b}}(2^{-1}y)$ for $y\in\F$, so that the claim follows from \cite[Theorem 1 (ii)]{weilCoulter} in this case.
	
	Next consider the case where $q$ is even. Write $\tr:=\tr_{q^b/q^{b/2}}$. Take $\delta \in\F_{q^b}^*$ such that $\delta^{q^{b/2}}+\delta=1$. Then $\{1,\delta\}$ is a basis of $\F_{q^b}$ over $\F$.  Write $\Delta:=\sum_{x\in\F_{q^b}}\psi_{\F}(\lambda x^{q^{b/2}+1})\psi_{\F_{q^b}}(a^{q^{b/2}}x)$.  Since $\psi_\F(z^2)=\psi_\F(z)$ for $z\in\F$, we have
	\begin{align*}
		\Delta&=\sum_{u_1,u_2\in\F}\psi_{\F}\left(\lambda (u_1+u_2\delta)(u_1+u_2\delta^{q^{b/2}})+\tr(a^{q^{b/2}})u_1+\tr(a^{q^{b/2}}\delta)u_2\right)\\
		&=\sum_{u_1\in\F}\psi_\F(\lambda u_1^2+\tr(a)u_1)\sum_{u_2\in\F}\psi_\F(\lambda u_1u_2+u_2^2\lambda\delta^{1+q^{b/2}}+\tr(a\delta^{q^{b/2}})u_2)\\
		&=\sum_{u_1\in\F}\psi_\F(\lambda u_1^2+\tr(a^2)u_1^2)\sum_{u_2\in\F}\psi_\F\left(u_2^2\cdot(\lambda^2u_1^2+\lambda\delta^{1+q^{b/2}}+\tr(a^2\delta^{2q^{b/2}}))\right).
	\end{align*}
	The inner sum is $q^{b/2}$ if $\lambda^2u_1^2+\lambda\delta^{1+q^{b/2}}+\tr(a\delta^{q^{b/2}})^2=0$ and is $0$ otherwise. The desired expression of $\Delta$ follows by some tedious but routine calculations which we omit here. This completes the proof.
\end{proof}

Take $V'=\F_{q^b}^{d/b}$, and define the Hermitian form
\begin{equation}\label{eqn_ExtFieldHerForm}
	 \kappa'(x,y)=x_{1}y_{1}^{q^{b/2}}+x_{2}y_{2}^{q^{b/2}}+\ldots+x_{d/b}y_{d/b}^{q^{b/2}},\quad x,y\in V'.
\end{equation}
As conventional, we write $H'(x)=\kappa'(x,x)$.
\begin{lemma}\label{lem_psiD_Herm'}
	Take $V'$, $\kappa'$ and $H'$ as above. If $D=\{x\in V:\,H'(x)\in\mathcal{T}\}$ for a subset $\mathcal{T}$ of $\F_{q^{b/2}}^*$ and $\psi_a(x)=\psi_{\F_{q^b}}(\kappa'(a,x))$, then
	\begin{align}\label{eqn_psiD_Herm'}
		 \psi_{a}(D)=(-1)^{d/b}q^{(d-b)/2}\sum_{t\in\mathcal{T}}K(\psi_{\F_{q^{b/2}}},t,H'(a)),\, a \in V'\setminus\{\bf 0\}.
	\end{align}
\end{lemma}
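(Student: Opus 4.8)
The plan is to evaluate $\psi_a(D)$ directly: expand the indicator of the condition $H'(x)\in\mathcal{T}$ into additive characters of $\F_{q^{b/2}}$, interchange the order of summation so that the resulting sum over $x\in V'$ splits as a product over the $d/b$ coordinates, and then evaluate each one-dimensional sum with Lemma \ref{lem_expsum_Herm'}. The Kloosterman sum in \eqref{eqn_psiD_Herm'} will appear only at the very end, when the sums over $\lambda$ and over the coordinates are reassembled.

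Concretely, first I would write $\psi_a(D)=\sum_{x\in V'}\big(\sum_{t\in\mathcal{T}}[[H'(x)=t]]\big)\psi_{\F_{q^b}}(\kappa'(a,x))$ and replace $[[H'(x)=t]]$ by the orthogonality expression $q^{-b/2}\sum_{\lambda\in\F_{q^{b/2}}}\psi_{\F_{q^{b/2}}}(\lambda(H'(x)-t))$, which is legitimate since $H'(x)-t\in\F_{q^{b/2}}$. Swapping the sums over $x$ and $(\lambda,t)$ gives
\[ \psi_a(D)=q^{-b/2}\sum_{t\in\mathcal{T}}\sum_{\lambda\in\F_{q^{b/2}}}\psi_{\F_{q^{b/2}}}(-\lambda t)\,S(\lambda,a),\qquad S(\lambda,a):=\sum_{x\in V'}\psi_{\F_{q^{b/2}}}\big(\lambda H'(x)\big)\psi_{\F_{q^b}}\big(\kappa'(a,x)\big). \]
Because $\kappa'$ is the diagonal form \eqref{eqn_ExtFieldHerForm}, we have $H'(x)=\sum_i x_i^{q^{b/2}+1}$ and $\kappa'(a,x)=\sum_i a_i x_i^{q^{b/2}}$, so $S(\lambda,a)=\prod_{i=1}^{d/b}\sum_{x_i\in\F_{q^b}}\psi_{\F_{q^{b/2}}}\big(\lambda x_i^{q^{b/2}+1}\big)\psi_{\F_{q^b}}\big(a_i x_i^{q^{b/2}}\big)$. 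For $\lambda\ne 0$, after the bijective substitution $x_i\mapsto x_i^{q^{b/2}}$ (which fixes $\psi_{\F_{q^b}}$ and sends $x_i^{q^{b/2}+1}$ to itself) each factor matches Lemma \ref{lem_expsum_Herm'} with its parameter $a$ taken to be $a_i^{q^{b/2}}$, hence equals $-q^{b/2}\psi_{\F_{q^{b/2}}}(-a_i^{q^{b/2}+1}\lambda^{-1})$; multiplying over $i$ and using $\sum_i a_i^{q^{b/2}+1}=H'(a)$ gives $S(\lambda,a)=(-1)^{d/b}q^{d/2}\psi_{\F_{q^{b/2}}}(-\lambda^{-1}H'(a))$. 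For $\lambda=0$ each factor is $\sum_{x_i}\psi_{\F_{q^b}}(a_i x_i^{q^{b/2}})$, which is $q^b$ if $a_i=0$ and $0$ otherwise, so $S(0,a)=0$ precisely because $a\ne\mathbf{0}$; this is the one place the hypothesis $a\ne\mathbf{0}$ is used. Substituting these evaluations back and cancelling $q^{-b/2}$ against $q^{d/2}$ yields
\[ \psi_a(D)=(-1)^{d/b}q^{(d-b)/2}\sum_{t\in\mathcal{T}}\sum_{\lambda\in\F_{q^{b/2}}^{*}}\psi_{\F_{q^{b/2}}}\big(-\lambda t-\lambda^{-1}H'(a)\big), \]
and replacing $\lambda$ by $-\lambda$ in the inner sum turns it into $K(\psi_{\F_{q^{b/2}}},t,H'(a))$, which is exactly \eqref{eqn_psiD_Herm'}.

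None of the individual steps is deep; the parts that need care are matching each coordinate sum to the precise shape of Lemma \ref{lem_expsum_Herm'} (the substitution $x_i\mapsto x_i^{q^{b/2}}$ together with the bookkeeping of conjugates $a_i\mapsto a_i^{q^{b/2}}$), keeping track of the power of $q$ coming out of the product against the $q^{-b/2}$ produced by orthogonality, and isolating the degenerate term $\lambda=0$. I expect the $\lambda$-bookkeeping and the final substitution $\lambda\mapsto-\lambda$ to be the trickiest details to get exactly right, but they are entirely routine.
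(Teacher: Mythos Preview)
Your proof is correct and follows essentially the same route as the paper's own argument: orthogonality to expand the indicator, separation of the $\lambda=0$ term via $a\ne\mathbf{0}$, coordinate factorisation of the inner sum, application of Lemma~\ref{lem_expsum_Herm'} to each factor, and the final substitution $\lambda\mapsto-\lambda$ to recognise the Kloosterman sum. The only cosmetic difference is that the paper uses the Frobenius invariance $\psi_{\F_{q^b}}(y)=\psi_{\F_{q^b}}(y^{q^{b/2}})$ to rewrite $\psi_{\F_{q^b}}(a_i x_i^{q^{b/2}})$ as $\psi_{\F_{q^b}}(a_i^{q^{b/2}} x_i)$ directly, whereas you achieve the same effect by the change of variable $x_i\mapsto x_i^{q^{b/2}}$.
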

\begin{proof}
	In this proof, we set $\F:=\F_{q^{b/2}}$ for brevity. Take a nonzero vector $a$. We have
	\begin{align}\label{eqn_EFCHHpsiD}
		\psi_{a}(D)&=\sum_{t\in\mathcal{T}}\sum_{x\in V'}[[H'(x)=t]]\psi_{a}(x)
		=\sum_{t\in\mathcal{T}}\sum_{x\in V'}q^{-b/2}\sum_{\lambda\in\F}\psi\left( \lambda(H'(x)-t)\right)\psi_{a}(x)\notag \\
		&=q^{-b/2}\sum_{t\in\mathcal{T}}\sum_{x\in V'}\psi_{a}(x)		 +q^{-b/2}\sum_{t\in\mathcal{T}}\sum_{\lambda\in\F^{*}}\psi_{\F}(-t\lambda)\sum_{x\in V'}\psi(\lambda H'(x))\psi_{a}(x).\notag
	\end{align}
	We have $\sum_{x\in V'}\psi_{a}(x)=0$ since $a\ne 0$. By Lemma \ref{lem_expsum_Herm'}, we have
	\begin{align*}
		\sum_{x\in V'}\psi_\F(\lambda H'(x))\psi_{a}(x)=&\prod_{i=1}^{d/b} \left(\sum_{x_i\in\F_{q^b}}\psi_{\F}(\lambda x_i^{q^{b/2}+1})\psi_{\F_{q^b}}(a_i^{q^{b/2}}x_i)\right)\\
		=&(-q^{b/2})^{d/b}\psi_\F(-H'(a) \lambda^{-1}).
	\end{align*}
	Therefore, we have
	\begin{align*}
		\psi_{a}(D)&=0+(-1)^{d/b}q^{(d-b)/2}\sum_{t\in\mathcal{T}}\sum_{\lambda\in \F^*}\psi_{\F}(-t\lambda-H'(a)\lambda^{-1})\\
		&=(-1)^{d/b}q^{(d-b)/2}\sum_{t\in\mathcal{T}} K(\psi_\F,t,H'(a)).
	\end{align*}
	This completes the proof.
\end{proof}

\begin{lemma}\label{lem_Herm'_Na}
	Take $V'$, $\kappa'$  and $H'$ as above, and assume that $d/b$ is odd. For $a\in\F_{q^{b/2}}$, let $N(a)$ be the size of $\{x\in V':\,H'(x)=a\}$. Then $N(a)=q^{(d-b)/2}(q^{d/2}+1)$ for $a\ne 0$.
\end{lemma}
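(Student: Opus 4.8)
The plan is to evaluate $N(a)$ directly by additive-character orthogonality over $\F_{q^{b/2}}$, which turns the count into a product of Gauss-type sums already computed in Lemma~\ref{lem_expsum_Herm'}. Write $\F:=\F_{q^{b/2}}$ and $n:=d/b$, so $n$ is odd by hypothesis and $H'(x)=\sum_{i=1}^{n}x_i^{q^{b/2}+1}$ for $x=(x_1,\dots,x_n)\in V'=\F_{q^b}^{n}$. First I would write
\begin{align*}
N(a)=\sum_{x\in V'}[[H'(x)=a]]=\frac{1}{q^{b/2}}\sum_{\lambda\in\F}\psi_{\F}(-\lambda a)\sum_{x\in V'}\psi_{\F}\bigl(\lambda H'(x)\bigr),
\end{align*}
and split off the $\lambda=0$ term, which contributes $q^{-b/2}\cdot|V'|=q^{d-b/2}$.

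For $\lambda\in\F^{*}$ I would factor the inner sum over the $n$ coordinates,
\begin{align*}
\sum_{x\in V'}\psi_{\F}\bigl(\lambda H'(x)\bigr)=\prod_{i=1}^{n}\Bigl(\sum_{x_i\in\F_{q^b}}\psi_{\F}\bigl(\lambda x_i^{q^{b/2}+1}\bigr)\Bigr),
\end{align*}
and evaluate each factor by Lemma~\ref{lem_expsum_Herm'} (with its additive-character parameter equal to $0$), obtaining $-q^{b/2}$; since $n$ is odd the product equals $(-q^{b/2})^{n}=-q^{d/2}$. Substituting back gives
\begin{align*}
N(a)=\frac{1}{q^{b/2}}\Bigl(q^{d}-q^{d/2}\sum_{\lambda\in\F^{*}}\psi_{\F}(-\lambda a)\Bigr).
\end{align*}
For $a\ne 0$ the sum $\sum_{\lambda\in\F^{*}}\psi_{\F}(-\lambda a)$ equals $-1$ by orthogonality, whence $N(a)=q^{-b/2}(q^{d}+q^{d/2})=q^{(d-b)/2}(q^{d/2}+1)$, as claimed.

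I do not expect a genuine obstacle: the only points needing care are that Lemma~\ref{lem_expsum_Herm'} is invoked in the degenerate case where its additive-character factor is trivial (so its conclusion $-q^{b/2}\psi_{\F}(0)=-q^{b/2}$ holds uniformly in $p$, even and odd), and that the hypothesis ``$d/b$ odd'' is precisely what makes $(-q^{b/2})^{d/b}=-q^{d/2}$ rather than $+q^{d/2}$, which is responsible for the $+1$ in the final answer. As an alternative one could derive the same value from Lemma~\ref{lem_psiD_Herm'} together with $\sum_{a\in\F}N(a)=q^{d}$, but the direct computation above is shorter and self-contained.
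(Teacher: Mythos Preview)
Your proof is correct, but it differs from the paper's argument. The paper proceeds by elementary counting: since $H'(\lambda x)=\lambda^{q^{b/2}+1}H'(x)$ for $\lambda\in\F_{q^b}^*$ and the norm map $\F_{q^b}^*\to\F_{q^{b/2}}^*$ is surjective, all nonzero values $a$ give the same $N(a)=N(1)$; then $(q^{b/2}-1)N(1)+N(0)=q^d$ together with the known size $|H(d/b-1,q^b)|$ from Table~\ref{tab_extfieldP'gt0} (which determines $N(0)$) yields the answer. Your approach instead evaluates $N(a)$ directly via additive characters and Lemma~\ref{lem_expsum_Herm'}. The paper's argument is more elementary and does not require the oddness of $d/b$ until the final substitution of $|H(d/b-1,q^b)|$, whereas your route stays entirely within the exponential-sum machinery of the section and makes the role of the parity hypothesis transparent at the step $(-q^{b/2})^{d/b}=-q^{d/2}$. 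Both are short; neither your suggested alternative via Lemma~\ref{lem_psiD_Herm'} nor the paper's actual proof uses Kloosterman sums here.
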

\begin{proof}
	Since $H'(\lambda x)=\lambda^{q^{b/2}+1}H'(x)$ for $\lambda\in\F_{q^b}^*$, we see that $N(a)=N(1)$ for all $a\in\F_{q^{b/2}}^*$.  Therefore, $(q^{b/2}-1)\cdot N(1)+N(0)=q^d$. On the other hand, $N(0)=1+(q^b-1)\cdot|H(d/b-1,q^b)|$, and $|H(d/b-1,q^b)|$ is available in Table \ref{tab_extfieldP'gt0}. The claim then follows.
\end{proof}

\subsection{The case $\cP=H(d-1,q)$, $\cP'=H(d/b-1,q^b)$, $d$ odd}\label{subsec_Hdodd}
We take $V'=\F_{q^b}^{d/b}$, let $\kappa'$ be as defined in \eqref{eqn_ExtFieldHerForm}, and set $\kappa=\tr_{q^b/q}\circ \kappa'$. As conventional, we write $H'(x):=\kappa'(x,x)$.
Let $\mathcal{T}$ be an $\F_{q^{1/2}}^{*}$-invariant subset of $\F_{q^{b/2}}^*$ such that $\tr_{q^{b/2}/q^{1/2}}(x)=0$ for each $x\in \mathcal{T}$. Take a set $T$ of $\F_{q^{1/2}}^*$-coset representatives of $\mathcal{T}$, so that $\mathcal{T}=\F_{q^{1/2}}^*\cdot T$. We set $\cM=\{\la v\ra_{\F_q}:\,H'(v)\in\mathcal{T}\}$
and correspondingly $D=\{x\in V:\,H'(x)\in\mathcal{T}\}$. Since $\frac{|\mathcal{T}|}{q-1}=\frac{|T|}{q^{1/2}+1}$, Lemma \ref{lem_Herm'_Na} yields
\begin{lemma}\label{lem_Herm_Msize}
	We have $|\cM|=\frac{|T|}{q^{1/2}+1}q^{(d-b)/2}(q^{d/2}+1)$.
\end{lemma}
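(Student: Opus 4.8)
The plan is to express $|\cM|$ through the affine count $|D|$ and then reduce $|D|$ to the fiber sizes of $H'$, which are exactly what Lemma \ref{lem_Herm'_Na} supplies.

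First I would check that the set $D=\{x\in V':\,H'(x)\in\mathcal{T}\}$ coincides with $\{\lambda v:\,\lambda\in\F_q^*,\ \la v\ra_{\F_q}\in\cM\}$, which is also what makes $\cM$ well defined. For $\lambda\in\F_q^*$ we have $H'(\lambda v)=\lambda^{q^{b/2}+1}H'(v)$; writing $b=2k+1$ (recall $b\mid d$ with $d$ odd, so $b$ and $d/b$ are odd) we get $q^{b/2}=q^{k}\cdot q^{1/2}$ and $\lambda^{q^{k}}=\lambda$, hence $\lambda^{q^{b/2}+1}=\mathrm{N}_{\F_q/\F_{q^{1/2}}}(\lambda)\in\F_{q^{1/2}}^{*}$. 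Since $\mathcal{T}$ is $\F_{q^{1/2}}^{*}$-invariant, $H'(v)\in\mathcal{T}$ if and only if $H'(\lambda v)\in\mathcal{T}$, so $D$ is a union of $\F_q^{*}$-lines and $|\cM|=|D|/(q-1)$.

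Next I would partition $D$ by the value of $H'$: with $N(t):=\#\{x\in V':\,H'(x)=t\}$ we have $|D|=\sum_{t\in\mathcal{T}}N(t)$. As $\mathcal{T}\subseteq\F_{q^{b/2}}^{*}$, every $t\in\mathcal{T}$ is nonzero, and $d/b$ is odd, so Lemma \ref{lem_Herm'_Na} gives $N(t)=q^{(d-b)/2}(q^{d/2}+1)$ for all $t\in\mathcal{T}$; therefore $|D|=|\mathcal{T}|\,q^{(d-b)/2}(q^{d/2}+1)$. Dividing by $q-1$ and using $|\mathcal{T}|/(q-1)=|T|/(q^{1/2}+1)$ — which holds because $\mathcal{T}=\F_{q^{1/2}}^{*}\cdot T$ is a disjoint union of free $\F_{q^{1/2}}^{*}$-orbits, each of size $q^{1/2}-1$, and $q-1=(q^{1/2}-1)(q^{1/2}+1)$ — yields $|\cM|=\frac{|T|}{q^{1/2}+1}q^{(d-b)/2}(q^{d/2}+1)$.

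The argument has essentially no obstacle: all the genuine content sits in Lemma \ref{lem_Herm'_Na}, whose proof in turn rests on the known size of $H(d/b-1,q^b)$ from Table \ref{tab_extfieldP'gt0}. The only points needing a moment's care are the identification of the two descriptions of $D$ (equivalently, the well-definedness of $\cM$) and the constant relating $|\mathcal{T}|$, $|T|$ and $q-1$, both of which are routine.
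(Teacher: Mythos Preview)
Your proof is correct and follows the same approach as the paper: apply Lemma~\ref{lem_Herm'_Na} to count $|D|=|\mathcal{T}|\,q^{(d-b)/2}(q^{d/2}+1)$, then divide by $q-1$ using $\frac{|\mathcal{T}|}{q-1}=\frac{|T|}{q^{1/2}+1}$. The paper's proof is a one-line reference to these two facts; you have simply unpacked the well-definedness of $\cM$ and the identity $|\mathcal{T}|/(q-1)=|T|/(q^{1/2}+1)$ more explicitly.
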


\begin{proposition}\label{prop_Herm_Tbound}
	Take notation as above. If $\cM$ is a $\frac{|T|}{q^{1/2}+1}q^{(d-b)/2}$-ovoid of $H(d-1,q)$, then $|T|\ge\frac{q^{b/2}}{(q^{1/2}-1)(2q^{b/4}+1)}$.
\end{proposition}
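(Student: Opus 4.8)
The plan is to feed a carefully chosen vector into the exponential-sum characterization \eqref{eqn_SNEFCHHpsiDM} of $m$-ovoids and then confront the resulting identity with Weil's bound \eqref{eqn_Kloos} on Kloosterman sums. The decisive choice is to take a vector $a$ lying in $D$: for $a\notin D$ the analogous identity turns out to be automatically consistent with \eqref{eqn_Kloos} and yields nothing, whereas the extra summand $-q^{d/2}$ that occurs in \eqref{eqn_SNEFCHHpsiDM} when $a\in D$ is precisely what creates tension with the square-root cancellation of Kloosterman sums. I would first record that, by the hypotheses on $\mathcal{T}$, one has $\tr_{q^b/q}(x)=0$ for every $x\in\mathcal{T}$, so that $\kappa(v,v)=\tr_{q^b/q}(H'(v))=0$ whenever $H'(v)\in\mathcal{T}$; hence $\cM\subseteq\cP$ and $D=\{x\in V:\,H'(x)\in\mathcal{T}\}$ really consists of isotropic vectors, and $\psi_a(x)=\psi(\tr_{q^b/q}(\kappa'(a,x)))=\psi_{\F_{q^b}}(\kappa'(a,x))$, which matches the hypothesis of Lemma \ref{lem_psiD_Herm'}.

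For the main computation, suppose $\cM$ is a $\tfrac{|T|}{q^{1/2}+1}q^{(d-b)/2}$-ovoid, so that $m(q-1)=|T|(q^{1/2}-1)q^{(d-b)/2}$ because $\tfrac{q-1}{q^{1/2}+1}=q^{1/2}-1$. Fix any $c\in\mathcal{T}$; by Lemma \ref{lem_Herm'_Na} there is a nonzero $a\in V'$ with $H'(a)=c$, and then $\la a\ra_{\F_q}\in\cM\subseteq\cP$, i.e.\ $a\in D$. Applying \eqref{eqn_SNEFCHHpsiDM} gives $\psi_a(D)=-q^{d/2}+m(q-1)$; on the other hand, since $d$ is odd the integer $d/b$ is odd, so Lemma \ref{lem_psiD_Herm'} gives $\psi_a(D)=-q^{(d-b)/2}\sum_{t\in\mathcal{T}}K(\psi_{\F_{q^{b/2}}},t,c)$. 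Comparing the two expressions and dividing by $-q^{(d-b)/2}$ produces the key identity
\[
\sum_{t\in\mathcal{T}}K(\psi_{\F_{q^{b/2}}},t,c)=q^{b/2}-|T|(q^{1/2}-1).
\]

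Finally, I would estimate the left-hand side by the triangle inequality: every index $t\in\mathcal{T}$ is nonzero, so \eqref{eqn_Kloos} gives $|K(\psi_{\F_{q^{b/2}}},t,c)|\le 2q^{b/4}$, and since $|\mathcal{T}|=|T|(q^{1/2}-1)$ one gets
\[
\bigl|\,q^{b/2}-|T|(q^{1/2}-1)\,\bigr|\le 2q^{b/4}\,|T|(q^{1/2}-1).
\]
If $|T|(q^{1/2}-1)\ge q^{b/2}$ the claimed bound $|T|\ge\tfrac{q^{b/2}}{(q^{1/2}-1)(2q^{b/4}+1)}$ holds a fortiori; otherwise the left side equals $q^{b/2}-|T|(q^{1/2}-1)$, and rearranging gives $q^{b/2}\le|T|(q^{1/2}-1)(2q^{b/4}+1)$, which is exactly the inequality to be proved. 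The only real point to be careful about is the set-up compatibility noted in the first paragraph; everything else is direct manipulation of the Kloosterman-sum formulas from Lemmas \ref{lem_expsum_Herm'}--\ref{lem_psiD_Herm'}.
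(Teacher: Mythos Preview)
Your proof is correct and follows essentially the same route as the paper: pick $a\in D$, equate the two expressions for $\psi_a(D)$ from \eqref{eqn_SNEFCHHpsiDM} and Lemma \ref{lem_psiD_Herm'}, and bound the resulting Kloosterman sum via \eqref{eqn_Kloos}. Your write-up is in fact a bit more careful than the paper's, explicitly checking the setup compatibility and the case $|T|(q^{1/2}-1)\ge q^{b/2}$, but the argument is the same.
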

\begin{proof}
	In this proof, we set $\F:=\F_{q^{b/2}}$ for brevity. We have $m=\frac{|T|}{q^{1/2}+1}q^{(d-b)/2}$ by Lemma \ref{lem_Herm_Msize}. Take $a\in D$, so that $\psi_a(D)=-q^{d/2}+m(q-1)$ by \eqref{eqn_SNEFCHHpsiDM}. On the other hand, we have $\psi_{a}(D)=-q^{(d-b)/2}\sum_{t\in\mathcal{T}}K(\psi_\F,t,H'(a))$ by Lemma \ref{lem_psiD_Herm'}. By the inequality in \eqref{eqn_Kloos}, we have
	\[
	|\psi_a(D)|\le q^{(d-b)/2}\cdot 2q^{b/4}(q^{1/2}-1)|T|=2q^{d/2-b/4}(q^{1/2}-1)|T|.
	\]
	The desired bound on $|T|$ follows after simplification. This completes the proof.
\end{proof}

By \eqref{eqn_SNEFCHHpsiDM} and \eqref{eqn_psiD_Herm'}, $\cM$ is a $\frac{|T|}{q^{1/2}+1}q^{(d-b)/2}$-ovoid if and only if
\begin{equation}\label{eqn_HtoH_psiDcond}
	\sum_{t\in\mathcal{T}} K(\psi_{\F_{q^{b/2}}},t,y)=\begin{cases}q^{b/2}-|\mathcal{T}|, & \text{if } y\in \mathcal{T},\\ -|\mathcal{T}|, &  \text{if } \tr_{q^{b/2}/q^{1/2}}(y)=0,\, y\not\in \mathcal{T}.\end{cases}
\end{equation}
The condition holds trivially when $y=0$. We observe that this condition is independent of $d/b$, so that we can run an exhaustive search for a given $(p,b,f,|T|)$ tuple.

\begin{example}\label{exa_FR_HtoH_smallpara}
	If $(p,b,f,|T|)$ is one of $(2,3,2,3),\,(2,3,6,9)$, then there is an element $\gamma$ of $\F_{q^{b/2}}$ such that $\mathcal{T}=\{\gamma^{2^i}z:\,0\le i\le \frac{bf}{2}-1,\,z\in\F_{q^{1/2}}^*\}$ has size $\frac{bf}{2}(q^{1/2}-1)$ and \eqref{eqn_HtoH_psiDcond} holds. Let $\cM_1$ be as in \eqref{eqn_M1def}. Since $\cM$ is a subset of $\cP\setminus \cM_1$, we deduce that $\cM=P\setminus\cM_1$ by comparing sizes in both cases.
\end{example}

\subsection{The case $\cP=Q^-(d-1,q)$, $\cP'=Q^-(d/b-1,q^b)$, $d/b\ge 4$}\label{subsec_QMdbeven}

Suppose that $V'=\F_{q^b}^{d/b}$, $\kappa'=Q'$, $\kappa=Q=\tr_{q^b/q}\circ Q'$, where
\[
Q'(x)=x_{1}x_{2}+x_{3}x_{4}+\ldots+x_{d/b-1}^2+a_{0}x_{d/b-1}x_{d/b}+a_{1}x_{d/b}^2,
\]
and $x_{d/b-1}^2+a_{0}x_{d/b-1}x_{d/b}+a_{1}x_{d/b}^2$ is irreducible over $\F_{q^b}$.
Let $\square$ be the set of nonzero squares of $\F_q$; in particular, $\square=\F_{q}^*$ if $q$ is even. Let $\mathcal{T}$ be a $\square$-invariant subset of $\F_{q^b}^*$ whose elements have relative trace $0$ to $\F_q$. Let $T$ be a set of $\square$-coset representatives of $\mathcal{T}$, so that $\mathcal{T}=\square\cdot T$. We set $\cM=\{\la v\ra_{\F_q}:\,Q'(v)\in\mathcal{T}\}$ and correspondingly $D=\{x\in V:\,Q'(x)\in\mathcal{T}\}$.

\begin{lemma}\label{lem_QMinus_Msize}
	$|\cM|=\epsilon|T|q^{d/2-b}(q^{d/2}+1)$, where $\epsilon=\frac{1}{2}$ or $1$ according as $q$ is odd or even.
\end{lemma}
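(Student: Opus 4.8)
The plan is to compute $|\cM|$ by a straightforward counting argument based on the quantity $N'(a):=\#\{x\in V':\,Q'(x)=a\}$ for $a\in\F_{q^b}$, exactly as was done for the Hermitian case in Lemma \ref{lem_Herm'_Na}. First I would recall that $Q'$ is a nondegenerate quadratic form of minus type on the $d/b$-dimensional space $V'$ over $\F_{q^b}$, so the number of zeros $N'(0)=1+(q^b-1)\cdot|Q^-(d/b-1,q^b)|$, where $|Q^-(d/b-1,q^b)|$ is read off from Table \ref{tab_extfieldP'gt0} (row 3 with $q$ replaced by $q^b$ and $d$ by $d/b$): namely $|Q^-(d/b-1,q^b)|=(q^{d/2}+1)\frac{q^{d/2-b}-1}{q^b-1}$. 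Hence $N'(0)=1+(q^b-1)(q^{d/2}+1)\frac{q^{d/2-b}-1}{q^b-1}=1+(q^{d/2}+1)(q^{d/2-b}-1)=q^{d-b}+q^{d/2}-q^{d/2-b}$. The remaining $q^d-N'(0)$ vectors are distributed among the $q^b-1$ nonzero values of $Q'$; since $Q'(\lambda x)=\lambda^2 Q'(x)$, the function $N'$ is constant on squares, and in fact for the minus-type form in even dimension $d/b$ the number of solutions of $Q'(x)=a$ for $a\ne 0$ equals $q^{d/2-b}(q^{d/2}+1)$ independently of $a$ — one checks this either from the standard formula for solution counts of quadratic forms (\cite[Chapter 6]{LidlFF}) or by averaging, since $\sum_{a\ne 0}N'(a)=q^d-N'(0)=q^{d-b}+q^{d/2-b}-q^{d/2}=(q^b-1)q^{d/2-b}(q^{d/2}+1)$, and the value is the same for all $a\ne0$ when $q$ is even (all nonzero elements are squares) while for $q$ odd one must additionally verify that $N'(a)$ does not depend on whether $a$ is a square, which follows because the two quadrics $Q'(x)=a$ for $a$ a nonsquare and $Q'(x)=a'$ for $a'$ a square are projectively equivalent via a similarity of $Q'$ (such a similarity with nonsquare multiplier exists since $\GO^-_{d/b}(q^b)$ contains similarities of every ratio).

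Given $N'(a)=q^{d/2-b}(q^{d/2}+1)$ for every $a\in\F_{q^b}^*$, I would then compute $|D|$ directly from the definition $D=\{x\in V:\,Q'(x)\in\mathcal{T}\}$: since $\mathcal{T}$ is $\square$-invariant with $|\mathcal{T}|=|\square|\cdot|T|$, and $|\square|=\frac{q-1}{2}$ if $q$ is odd, $|\square|=q-1$ if $q$ is even, we get
\[
|D|=\sum_{t\in\mathcal{T}}N'(t)=|\mathcal{T}|\cdot q^{d/2-b}(q^{d/2}+1)=\epsilon|T|(q-1)q^{d/2-b}(q^{d/2}+1),
\]
where $\epsilon=\tfrac12$ or $1$ according as $q$ is odd or even. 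Finally, from \eqref{eqn_def_D} we have $|D|=(q-1)|\cM|$ (each point $\la v\ra\in\cM$ contributes its $q-1$ nonzero scalar multiples, and distinct points give disjoint sets since $D$ is a union of punctured lines), so dividing by $q-1$ yields $|\cM|=\epsilon|T|q^{d/2-b}(q^{d/2}+1)$, as claimed.

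The computation is essentially routine; the one genuine point requiring care is the uniformity statement $N'(a)=q^{d/2-b}(q^{d/2}+1)$ for all nonzero $a$, and in particular its independence of the square class of $a$ when $q$ is odd. I expect this to be the main obstacle, and I would handle it by invoking the existence of a similarity $g\in\GO^-_{d/b}(q^b)$ of $Q'$ with an arbitrary prescribed nonzero multiplier (equivalently, that the spinor-norm/similarity-ratio map is surjective onto $\F_{q^b}^*$ for the minus-type group in even dimension), which gives a bijection between the level sets $\{Q'=a\}$ and $\{Q'=\mu^2 a\}$ for every $\mu$; since the multipliers range over all of $\F_{q^b}^*$ this collapses the two potential values of $N'$ on the square and nonsquare classes into one. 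An alternative, fully self-contained route — and probably the cleanest to write — is to just quote the classical closed-form count for the number of representations of a nonzero element by a nondegenerate quadratic form (\cite[Theorems 6.26, 6.27]{LidlFF}), which for minus type in even dimension $n=d/b$ over $\F_{q^b}$ gives exactly $q^{(d/b-1)\cdot b}+ \text{(lower order)}$ reorganizing to $q^{d/2-b}(q^{d/2}+1)$; this sidesteps the square-class subtlety altogether. Either way the lemma follows.
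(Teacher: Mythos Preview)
Your approach is correct and essentially identical to the paper's: both establish that $N'(a)$ is constant on $\F_{q^b}^*$ by invoking a similarity of $Q'$ with nonsquare multiplier (the paper cites \cite[(2.8.2)]{KleidmanThemaximal8} for this), then count exactly as in Lemma~\ref{lem_Herm'_Na}. Note two arithmetic slips in your intermediate expressions: the correct value is $N'(0)=q^{d-b}-q^{d/2}+q^{d/2-b}$ (you flipped the signs on the last two terms), and hence $q^d-N'(0)=q^d-q^{d-b}+q^{d/2}-q^{d/2-b}$; these do not affect your final conclusion $N'(a)=q^{d/2-b}(q^{d/2}+1)$, which is right.
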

\begin{proof}
	If $q$ is even, then $Q'(\lambda x)=\lambda^2Q'(x)$ for $\lambda\in\F_{q^b}^*$, so the number of solutions to $Q'(x)=a$ is a constant for any nonzero $a$. If $q$ is odd, then there is a similarity $g$ such that $Q'(x^g)=\mu Q'(x)$ for a nonsquare $\mu$ by p.39, (2.8.2) of \cite{KleidmanThemaximal8}, so the same conclusion holds. We then argue as in Lemma \ref{lem_Herm'_Na} and get the desired expression of $|\cM|$.
\end{proof}

\begin{proposition}\label{prop_Ellip_Tbound}
	Take notation as above, and set $\epsilon=\frac{1}{2}$ or $1$ according as $q$ is odd or even. If $\cM$ is an $\epsilon|T|q^{d/2-b}$-ovoid of $Q^{-}(d-1,q)$, then $|T|\ge\frac{q^b}{\epsilon(q-1)(2q^{b/2}+1)}$.
\end{proposition}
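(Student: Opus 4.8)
The plan is to run the argument exactly parallel to the proof of Proposition \ref{prop_Herm_Tbound}. By Lemma \ref{lem_QMinus_Msize} we have $|\cM|=\epsilon|T|q^{d/2-b}(q^{d/2}+1)$, and since the ovoid number of $Q^-(d-1,q)$ is $q^{d/2}+1$ by Table \ref{tab_valuede}, the hypothesis that $\cM$ is an $m$-ovoid forces $m=\epsilon|T|q^{d/2-b}$. Picking any $a\in D$, the $m$-ovoid condition \eqref{eqn_SNEFCHHpsiDM} gives $\psi_a(D)=-q^{d/2}+m(q-1)$; so it suffices to bound $|\psi_a(D)|$ from above, and for this I would first establish the analogue of Lemma \ref{lem_psiD_Herm'}: an expression of $\psi_a(D)$ as a short sum of Kloosterman sums over $\F_{q^b}$.

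To get that expression, write $\f'$ for the polar form of $Q'$, so that $\psi_a(x)=\psi_{\F_{q^b}}(\f'(a,x))$ (since $\psi\circ\tr_{q^b/q}=\psi_{\F_{q^b}}$). Expanding the indicator of $\{Q'(x)\in\mathcal{T}\}$ via additive characters of $\F_{q^b}$ yields
\[
\psi_a(D)=\frac{1}{q^b}\sum_{t\in\mathcal{T}}\sum_{\lambda\in\F_{q^b}^*}\psi_{\F_{q^b}}(-\lambda t)\sum_{x\in V'}\psi_{\F_{q^b}}\bigl(\lambda Q'(x)+\f'(a,x)\bigr),
\]
the $\lambda=0$ term vanishing because $a\neq 0$ and $\f'$ is nondegenerate. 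Completing the square (using nondegeneracy of $\f'$ to solve $\lambda\f'(\cdot,c)=\f'(\cdot,a)$, i.e. $c=\lambda^{-1}a$) turns the inner sum into $\psi_{\F_{q^b}}(-\lambda^{-1}Q'(a))\cdot G(\lambda)$, where $G(\lambda):=\sum_{x\in V'}\psi_{\F_{q^b}}(\lambda Q'(x))$ is a quadratic Gauss sum. Here I would argue that $\lambda Q'\cong Q'$ for every $\lambda\in\F_{q^b}^*$: when $q$ is odd this holds because $d/b$ is even, so $\mathrm{disc}(\lambda Q')=\lambda^{d/b}\mathrm{disc}(Q')$ differs from $\mathrm{disc}(Q')$ only by a square; when $q$ is even one verifies it by an explicit change of variables (rescale a hyperbolic pair, and — using that $\F_{q^b}$ is perfect — renormalise the anisotropic binary summand to remove $\lambda$). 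Since $Q'$ has elliptic type of even rank $d/b$, a standard count gives $G(\lambda)=-q^{d/2}$ for all $\lambda\neq 0$, and after the substitution $\lambda\mapsto-\lambda$ we obtain
\[
\psi_a(D)=-q^{d/2-b}\sum_{t\in\mathcal{T}}K\bigl(\psi_{\F_{q^b}},t,Q'(a)\bigr).
\]

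With this identity the conclusion is routine. Since $a\in D$, we have $Q'(a)\in\mathcal{T}\subseteq\F_{q^b}^*$, so for each $t\in\mathcal{T}$ both arguments of the Kloosterman sum are nonzero and \eqref{eqn_Kloos} gives $|K(\psi_{\F_{q^b}},t,Q'(a))|\le 2q^{b/2}$; combined with $|\mathcal{T}|=|\square|\cdot|T|=\epsilon(q-1)|T|$ this yields $|\psi_a(D)|\le 2\epsilon(q-1)|T|q^{d/2-b/2}$. On the other hand $\psi_a(D)=-q^{d/2}+m(q-1)=-q^{d/2}+\epsilon(q-1)|T|q^{d/2-b}$, so $q^{d/2}\le m(q-1)+|\psi_a(D)|\le\epsilon(q-1)|T|q^{d/2-b}(2q^{b/2}+1)$; dividing by $q^{d/2-b}$ gives the asserted bound $|T|\ge q^b/(\epsilon(q-1)(2q^{b/2}+1))$.

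The main obstacle I anticipate is the step showing that $G(\lambda)$ is independent of $\lambda\in\F_{q^b}^*$ — equivalently, that scaling the elliptic form $Q'$ by an arbitrary nonzero scalar preserves its isometry type — which in even characteristic is not covered by the usual discriminant argument and has to be done by hand; the rest is bookkeeping with $\epsilon$, $|\square|$, and the triangle inequality, exactly as in Proposition \ref{prop_Herm_Tbound}.
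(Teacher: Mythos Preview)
Your proposal is correct and follows essentially the same route as the paper: expand the indicator of $\{Q'(x)\in\mathcal{T}\}$ by additive characters, complete the square, evaluate the resulting Gauss sum over $V'$ as $-q^{d/2}$, recognise the $\lambda$-sum as a Kloosterman sum, and conclude by the Weil bound together with \eqref{eqn_SNEFCHHpsiDM}. The only place where your argument differs from the paper is the evaluation of $G(\lambda)=\sum_{x\in V'}\psi_{\F_{q^b}}(\lambda Q'(x))$: the paper fixes the model $a_0=0$, $a_1=-\gamma$ (valid only for odd $q$) and computes the product of hyperbolic and anisotropic contributions directly via quadratic Gauss sums, leaving the even-$q$ case to the reader; you instead argue once that $\lambda Q'\cong Q'$ for all $\lambda\in\F_{q^b}^*$ (by discriminant when $q$ is odd, by perfectness when $q$ is even) and then invoke the standard count $G(1)=-q^{d/2}$ for elliptic forms. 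Your treatment is a little more uniform across characteristics, but the two arguments are otherwise the same.
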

\begin{proof}
	We only give the proof for $q$ odd, and the case $q$ even is similar. In this proof, we write $E=\F_{q^b}$ for brevity. Take $\gamma$ to be a fixed nonsquare of $\F_q$ and we specify $a_0=0$, $a_1=-\gamma$ in the definition of $Q'$. Take $\la a\ra\in\cP$. We have
	\begin{align}
		\psi_{a}(D)&=\sum_{t\in\mathcal{T}}\sum_{x\in V'}[[Q'(x)=t]]\psi_{a}(x)\notag\\
		&=q^{-b}\sum_{t\in\mathcal{T}}\sum_{\lambda\in\F_{q^{b}}}
		\psi_{E}(-\lambda t)\sum_{x\in V'}\psi_{E}(\lambda Q'(x))\psi_{a}(x).
	\end{align}
	When $\lambda=0$, the summation of $\psi_{a}(x)$ over $x\in V'$ contributes $0$ because $a\ne0$. If $\lambda$ is nonzero, then
	\begin{align*}
		\sum_{x\in V'}\psi_{E}&(\lambda Q'(x))\psi_{a}(x)=\sum_{x\in V'}\psi_{E}\left(\lambda Q'(x)+B'(x,a) \right)\\
		&=\sum_{x\in V'}\psi_{E}\left(\lambda (Q'(x+a\lambda^{-1})-Q'(a\lambda^{-1}))\right)=\psi_{E}(-\lambda^{-1}Q'(a))\sum_{x\in V'}\psi_{E}\left(\lambda Q'(x)\right),
	\end{align*}
	where we replace $x+a\lambda^{-1}$ by $x$ in the second equality.  We compute that
	\begin{align*}
		\sum_{x\in V'}\psi_{E}\left(\lambda Q'(x)\right)=&
		\prod_{i=1}^{d/(2b)-1}\left(\sum_{x_{2i-1},x_{2i}}\psi_{E}(\lambda x_{2i-1}x_{2i})\right )
		\cdot\sum_{x_{d/b-1}}\psi_{E}(\lambda x_{d/b-1}^2)\sum_{x_{d/b}}\psi_{E}(-\lambda\gamma x_{d/b}^2)\\
		=&q^{d/2-b}\eta(-\gamma)G_E(\eta)^2=-q^{d/2}.
	\end{align*}
	Here, $\eta$ is the quadratic character of $E^*$ and $G_E(\eta)$ is the associated Gauss sum, cf. \cite{LidlFF}. Therefore, we have
	\begin{align}\label{eqn_EFCQQpsiDqodd}		 \psi_{a}(D)&=-q^{d/2-b}\sum_{t\in\mathcal{T}}\sum_{\lambda\in\F_{q^{b}}^*} \psi_{E}(-t\lambda-\lambda^{-1}Q'(a))=-q^{d/2-b}\sum_{t\in\mathcal{T}}K(\psi_{E},t,Q'(a)).
	\end{align}
	Now take $a\in D$, so that $\psi_a(D)=-q^{d/2}+m(q-1)$ by \eqref{eqn_EFCHHpsiDM}. We thus have $q^{d/2}-m(q-1)\le \frac{1}{2}(q-1)|T|q^{d/2-b}\cdot 2q^{b/2}$ by \eqref{eqn_Kloos}. We have $m=|T|q^{d/2-b}$ by Lemma \ref{lem_QMinus_Msize}, and the desired bound on $|T|$ then follows. This completes the proof.
\end{proof}
We remark that in the proof of Proposition \ref{prop_Ellip_Tbound}, the expression of $\psi_a(D)$ in the case $q$ even is exactly the same as  \eqref{eqn_EFCQQpsiDqodd}. This is due to the fact that $\sum_{x\in V'}\psi_{E}\left(\lambda Q'(x)\right)=-q^{d/2}$ also holds when $q$ is even. We leave the details to the interested reader. By \eqref{eqn_EFCHHpsiDM} and \eqref{eqn_EFCQQpsiDqodd}, $\cM$ is a $|T|q^{d/2-b}$-ovoid if and only if
\begin{equation}\label{eqn_QMtoQM_psiDcond}
	\sum_{t\in\mathcal{T}}K(\psi_{\F_{q^2}},t,y)
	=\begin{cases}q^b-|\mathcal{T}|,\,&\textup{if } y\in\mathcal{T},\\
		-|\mathcal{T}|,\,&\textup{if }\tr_{q^{b}/q}(y) = 0,\,y\not\in\mathcal{T}.\end{cases}
\end{equation}
The condition holds trivially when $y=0$.

\begin{remark}\label{rem_I_QMtoQM_b2}
	Take notation as above, and assume that $b=2$ and $\cM$ is an $m$-ovoid. The set of elements of relative trace $0$ to $\F_q$ is $\F_{q}\cdot\delta$, where $\delta\in\F_{q^2}^*$ with $\delta+\delta^q=0$. If $q$ is odd, then $\mathcal{T}$ is either $\F_q^*\cdot\delta$, $\square\cdot\delta$ or its complement. Since $m=\frac{|\mathcal{T}|}{q-1}q^{d/2-b}$ is an integer, we deduce that $\mathcal{T}=\F_q^*\cdot\delta$. Therefore, if $\cM$ is an $m$-ovoid of the prescribed form, then $\cM=\{\la v\ra:\,Q(v)=0,\,Q'(v)\ne 0\}$.
\end{remark}

\begin{example}\label{exa_FR_QMtoQM_smallpara}
	If $(p,b,f,|T|)$ is one of $(2,3,1,3)$, $(2,3,3,9)$, $(2,4,1,4)$, then we check with Magma that there is an element $\gamma$ of $\F_{q^{b}}$ such that $\mathcal{T}=\{\gamma^{2^i}z:\,0\le i\le bf-1,\,z\in\F_{q}^*\}$ has size $bf(q-1)$ and \eqref{eqn_QMtoQM_psiDcond} holds. In these three cases, $\cM=\{\la v\ra_{\F_q}:\,Q'(x)\in\mathcal{T}\}$ is a $bfq^{d/2-b}$-ovoid of $Q^-(d-1,q)$ if $d/b$ is even and $d/b\ge 4$. Let $\cM_1$ be as in \eqref{eqn_M1def}, i.e., $\cM_1=\{\la v\ra_{\F_q}:\,Q'(x)=0\}$. Since $\cM$ is a subset of $\cP\setminus \cM_1$, we deduce that $\cM=P\setminus\cM_1$ by comparing sizes in the first two cases. For the third case, we have $\cM=\cP\setminus\cM_2$, where
\begin{equation}\label{eqn_sec4cM2def}
\cM_2=\{\la v\ra_{\F_q}:\,\tr_{q^4/q^2}(Q'(x))=0\}.
\end{equation}
\end{example}

\subsection{The case $\cP=Q^-(d-1,q)$, $\cP'=H(d/b-1,q^b)$, $b$ even, $d/b$ odd}\label{subsec_QMHdbodd}
In this case, take $V'=\F_{q^b}^{d/b}$, $\kappa=\tr_{q^{b/2}/q}\circ H'$, where  $H'(x)=\kappa'(x,x)$ and  $\kappa'$ is as in  \eqref{eqn_ExtFieldHerForm}. The polar form $\f$ of $\kappa$ is $\f(x,y)=\tr_{q^b/q}(\kappa'(x,y))$. Moreover, we have
\[
H'(cx)=c^{1+q^{b/2}}H'(x)=c^2H'(x),\quad c\in\F_q.
\]
Let $\square$ be the set of nonzero squares of $\F_q$; in particular, $\square=\F_q^*$ if $q$ is even. Let $\mathcal{T}$ be a $\square$-invariant subset of $\F_{q^{b/2}}^*$ whose elements have relative trace $0$ to $\F_q$. Let $T$ be a set of $\square$-coset representatives $\mathcal{T}$, so that $\mathcal{T}=\square\cdot T$.  We set $\cM=\{\la v\ra_{\F_q}:\,H'(v)\in\mathcal{T}\}$, and correspondingly $D=\{v\in V:\,H'(v)\in\mathcal{T}\}$. By Lemma \ref{lem_Herm'_Na}, we deduce that $|\cM|=m(q^{d/2}+1)$ with $m=\frac{|\mathcal{T}|}{q-1}q^{(d-b)/2}$, i.e.,  $m=\epsilon|T|q^{(d-b)/2}$ with $\epsilon=\frac{1}{2}$ or $1$ according as $q$ is odd or even.

Take a nonzero vector $a$. We observe that
\[
\psi_a(x)=\psi_{\F_q}(\f(a,x))= \psi_{\F_{q^b}}(\kappa'(a,x)).
\]
By Lemma \ref{lem_psiD_Herm'}, $\psi_a(D)$ is the same as in \eqref{eqn_psiD_Herm'} with $\mathcal{T}$ as specified here. In the case $a\in D$, we obtain the following bound by \eqref{eqn_EFCHHpsiDM} and \eqref{eqn_Kloos}.
\begin{proposition}\label{prop_EllipHerm_Tbound}
	Take notation as above, and set $\epsilon=\frac{1}{2}$ or $1$ according as $q$ is odd or even. If $\cM$ is an $\epsilon|T|q^{(d-b)/2}$-ovoid of $Q^{-}(d-1,q)$, then $|T|\ge\frac{q^{b/2}}{\epsilon(q-1)(2q^{b/4}+1)}$.
\end{proposition}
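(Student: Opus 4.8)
The plan is to follow the template of the proofs of Propositions~\ref{prop_Herm_Tbound} and~\ref{prop_Ellip_Tbound}, since every analytic ingredient needed is already in place. First I would fix a vector $a\in D$; then $\la a\ra_{\F_q}\in\cM\subseteq\cP$, and I evaluate $\psi_a(D)$ in two ways. On the one hand, because $\cM$ is assumed to be an $\epsilon|T|q^{(d-b)/2}$-ovoid of $Q^{-}(d-1,q)$, the characterization \eqref{eqn_SNEFCHHpsiDM} (equivalently \eqref{eqn_EFCHHpsiDM} together with the $m$-ovoid property) gives $\psi_a(D)=-q^{d/2}+m(q-1)$ with $m=\epsilon|T|q^{(d-b)/2}$. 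On the other hand, by the discussion preceding the proposition the additive character $\psi_a$ has the form $x\mapsto\psi_{\F_{q^b}}(\kappa'(a,x))$, so Lemma~\ref{lem_psiD_Herm'} applies and yields $\psi_a(D)=(-1)^{d/b}q^{(d-b)/2}\sum_{t\in\mathcal{T}}K(\psi_{\F_{q^{b/2}}},t,H'(a))$; here the sign is $-1$ since $d/b$ is odd.

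Next I would estimate the Kloosterman sums. Every $t\in\mathcal{T}$ lies in $\F_{q^{b/2}}^*$, so the Weil bound \eqref{eqn_Kloos} applied with $\F=\F_{q^{b/2}}$ gives $|K(\psi_{\F_{q^{b/2}}},t,H'(a))|\le 2q^{b/4}$ for each such $t$. Summing over the $|\mathcal{T}|=\epsilon(q-1)|T|$ terms and multiplying by $q^{(d-b)/2}$ yields
\[
\bigl|q^{d/2}-m(q-1)\bigr|=|\psi_a(D)|\le 2\epsilon(q-1)|T|\,q^{(d-b)/2+b/4}=2\epsilon(q-1)|T|\,q^{d/2-b/4}.
\]
Substituting $m(q-1)=\epsilon(q-1)|T|\,q^{d/2-b/2}$ and dividing through by $q^{d/2-b/2}$ leaves $q^{b/2}-\epsilon(q-1)|T|\le\bigl|q^{b/2}-\epsilon(q-1)|T|\bigr|\le 2\epsilon(q-1)|T|\,q^{b/4}$, and rearranging gives $q^{b/2}\le\epsilon(q-1)|T|(2q^{b/4}+1)$, which is exactly the claimed lower bound on $|T|$.

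I do not expect any real obstacle: the closed form for $\psi_a(D)$ (Lemma~\ref{lem_psiD_Herm'}) and the Weil estimate for Kloosterman sums are both already established, so the argument is a short combination of the two. The only points that need a little care are the bookkeeping of the constant $\epsilon$ when passing between the quantities $|\mathcal{T}|$, $|T|$ and $m$ (that is, treating $q$ odd and $q$ even uniformly), and noting that the sign $(-1)^{d/b}$ in \eqref{eqn_psiD_Herm'} equals $-1$ precisely because $d/b$ is odd in this case.
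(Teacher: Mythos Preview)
Your proposal is correct and follows exactly the approach the paper intends: the paper's own proof is the single sentence ``In the case $a\in D$, we obtain the following bound by \eqref{eqn_EFCHHpsiDM} and \eqref{eqn_Kloos},'' and you have faithfully unpacked precisely those two ingredients together with Lemma~\ref{lem_psiD_Herm'}. The bookkeeping with $\epsilon$, $|\mathcal{T}|=\epsilon(q-1)|T|$, and the sign $(-1)^{d/b}=-1$ is handled correctly.
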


By \eqref{eqn_EFCHHpsiDM}, $\cM$ is an $\epsilon|T|q^{(d-b)/2}$-ovoid if and only if
\begin{equation}\label{eqn_II_psiDcond_HtoQM}
	\sum_{t\in\mathcal{T}} K(\psi_{\F_{q^{b/2}}},t,u)
	=\begin{cases}q^{b/2}-|\mathcal{T}|,\,&\textup{ if }u\in \mathcal{T},\\
		-|\mathcal{T}|,\,&\textup{ if }\tr_{q^{b/2}/q}(u)=0,\,u\not\in\mathcal{T}.\end{cases}
\end{equation}
If $u=0$, then the condition holds trivially.
\begin{example}\label{exa_FR_HtoQM_smallpara}
	In the case $(p,b,f,|T|)$ is one of $\{(2,6,1,3),\,(2,6,3,9),(2,8,1,4)\}$, we check with Magma that there is an element $\gamma$ of $\F_{q^{b/2}}$ such that $\mathcal{T}=\{\gamma^{2^i}z:\,0\le i\le bf/2-1,\,z\in\F_{q}^*\}$ has size $\frac{1}{2}bf(q-1)$ and \eqref{eqn_II_psiDcond_HtoQM} holds. In these three cases, $\cM=\{\la v\ra_{\F_q}:\,H'(x)\in\mathcal{T}\}$ is a $\frac{1}{2}bfq^{(d-b)/2}$-ovoid of $Q^-(d-1,q)$ if $d/b$ is odd. They are projectively equivalent to those in Example \ref{exa_FR_QMtoQM_smallpara}.
\end{example}

\begin{remark}\label{rem_I_HtoQM_b24}
	For later use in Section \ref{subsec_II_QM}, we consider the case where $b\le 4$ and $\cM$ is an $m$-ovoid. If $b=2$, then no element of $\F_q^*$ has relative trace $0$ to $\F_q$, so this case is impossible.  Suppose that  $b=4$. Take $\delta\in\F_{q^2}^*$ such that $\delta+\delta^q=0$. We have that $\tr_{q^2/q}(y)=0$ if and only if $y\in\F_q\cdot\delta$. Therefore, $\mathcal{T}$ is either $\F_q^*\cdot\delta$,  $\square\cdot\delta$ or its complement. Since $m=\frac{|\mathcal{T}|}{q-1}q^{(d-b)/2}$ is an integer, we deduce that $\mathcal{T}=\F_q^*\cdot\delta$ if $q$ is odd. To sum up, we must have $\mathcal{T}=\F_q^*\cdot\delta$ and correspondingly $\cM=\{\la v\ra:\,Q(v)=0,\,H'(v)\ne 0\}$ when $b=4$ for $\cM$ to be an $m$-ovoid.
\end{remark}

\subsection{The case $\cP=W(d-1,q)$, $\cP'=H(d/b-1,q^b)$, $b$ even, $d/b$ odd}\label{subsec_HtoW_I}

In this case, $\kappa=\tr_{q^b/q}\circ\lambda \kappa'$ for some $\lambda\in\F_{q^b}^*$ such that $\lambda+\lambda^{q^{b/2}}=0$. Set $V=\F_{q^b}^{d/b}$, and let $\kappa'$ be as in \eqref{eqn_ExtFieldHerForm}. As conventional, we write $H'(x)=\kappa'(x,x)$. Since $b$ is even, we have
\[
H'(cx)=c^{1+q^{b/2}}H'(x)=c^2H'(x),\quad c\in\F_q.
\]
Let $\mathcal{T}$ be a $\square$-invariant subset of $\F_{q^{b/2}}^*$, where $\square$ is the set of nonzero squares of $\F_q$. Take a set $T$ of $\square$-coset representatives of $\mathcal{T}$, so that $\mathcal{T}=\square\cdot T$. We define
$\cM=\{\la v\ra_{\F_q}:\,H'(x)\in\mathcal{T}\}$ and correspondingly $D=\{x\in V:\,H'(x)\in\mathcal{T}\}$. Then  $|\cM|=m(q^{d/2}+1)$ with $m=\epsilon |T|q^{(d-b)/2}$ by Lemma \ref{lem_Herm'_Na}, where $\epsilon=1/2$ or $1$ according as $q$ is odd or even. For a nonzero vector $a$, we observe that
\[
\psi_a(x)=\psi_{\F_q}(\kappa(a,x))=\psi_{\F_{q^b}}(\lambda \kappa'(a,x))=\psi_{\F_{q^b}}( \kappa'(\lambda a,x)).
\]
Together with Lemma \ref{lem_psiD_Herm'}, we deduce that
\[
\psi_{a}(D)=-q^{(d-b)/2}\sum_{t\in\mathcal{T}} K(\psi_{\F_{q^{b/2}}},t,H'(\lambda a)).
\]
In the case $a\in D$,  we obtain the following bound
by \eqref{eqn_EFCHHpsiDM} and \eqref{eqn_Kloos}.
\begin{proposition}\label{prop_HtoW_Tbound}
	Take notation as above, and set $\epsilon=1/2$ or $1$ according as $q$ is odd or even. If $\cM$ is an $\epsilon|T|q^{(d-b)/2}$-ovoid of $W(d-1,q)$, then $|T|\ge\frac{q^{b/2}}{\epsilon(q-1)(2q^{b/4}+1)}$.
\end{proposition}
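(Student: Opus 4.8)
The plan is to mimic the proofs of Propositions \ref{prop_Herm_Tbound}, \ref{prop_Ellip_Tbound} and \ref{prop_EllipHerm_Tbound}: combine the closed form for $\psi_a(D)$ established immediately above the statement with the Kloosterman bound \eqref{eqn_Kloos}, evaluated at a vector $a$ lying in $D$.

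First I would pick any $a\in D$ (such $a$ exists since the $m$-ovoid $\cM$ is nonempty). Because $\cM$ is an $\epsilon|T|q^{(d-b)/2}$-ovoid of $W(d-1,q)$, the characterization \eqref{eqn_SNEFCHHpsiDM} gives $\psi_a(D)=-q^{d/2}+m(q-1)$ with $m=\epsilon|T|q^{(d-b)/2}$. On the other hand, the displayed identity just before the proposition reads $\psi_a(D)=-q^{(d-b)/2}\sum_{t\in\mathcal{T}}K(\psi_{\F_{q^{b/2}}},t,H'(\lambda a))$.

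Next I would verify that the sharp Kloosterman estimate applies to each summand. Since $a\in D$, we have $H'(a)\in\mathcal{T}\subseteq\F_{q^{b/2}}^{*}$, and as $b$ is even, $H'(\lambda a)=\lambda^{1+q^{b/2}}H'(a)$ is a nonzero element of $\F_{q^{b/2}}$; moreover every $t\in\mathcal{T}$ is nonzero. Hence \eqref{eqn_Kloos} gives $|K(\psi_{\F_{q^{b/2}}},t,H'(\lambda a))|\le 2q^{b/4}$ for each $t\in\mathcal{T}$, so $|\psi_a(D)|\le 2q^{b/4}|\mathcal{T}|\,q^{(d-b)/2}$. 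Now $|\mathcal{T}|=\epsilon|T|(q-1)$ by the choice of $T$, and substituting $\psi_a(D)=-q^{d/2}+m(q-1)=-q^{(d-b)/2}\bigl(q^{b/2}-\epsilon|T|(q-1)\bigr)$ into this inequality and cancelling the common factor $q^{(d-b)/2}$ yields $q^{b/2}-\epsilon|T|(q-1)\le 2q^{b/4}\epsilon|T|(q-1)$, i.e. $q^{b/2}\le \epsilon|T|(q-1)(2q^{b/4}+1)$, which is precisely the asserted bound on $|T|$.

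I do not expect a genuine obstacle here: the argument is a routine instance of the template already used three times in this section. The only points needing a little care are confirming that $H'(\lambda a)\ne 0$ (so that \eqref{eqn_Kloos} is available in the form with no trivial Kloosterman term), and correctly tracking the constant $\epsilon$ through the identity $|\mathcal{T}|=\epsilon|T|(q-1)$ according to the parity of $q$.
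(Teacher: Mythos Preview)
Your proposal is correct and follows essentially the same approach as the paper, which merely states that the bound follows from \eqref{eqn_EFCHHpsiDM} and \eqref{eqn_Kloos} for $a\in D$, leaving the details implicit. Your write-up supplies exactly those details, including the identity $|\mathcal{T}|=\epsilon|T|(q-1)$ and the verification that the Kloosterman estimate \eqref{eqn_Kloos} applies (in fact $t\ne 0$ already suffices for \eqref{eqn_Kloos}, so checking $H'(\lambda a)\ne 0$ is harmless but not strictly needed).
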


By \eqref{eqn_EFCHHpsiDM}, $\cM$ is an $\epsilon|T|q^{(d-b)/2}$-ovoid if and only if
\begin{equation}\label{eqn_II_psiDcond_HtoW}
	\sum_{t\in\mathcal{T}} K(\psi_{\F_{q^{b/2}}},t,-\lambda^2u)
	=\begin{cases}q^{b/2}-|\mathcal{T}|,\,\textup{ if }u\in \mathcal{T}\\
		-|\mathcal{T}|,\,\textup{ if }u\in\F_{q^{b/2}}\setminus\mathcal{T}\end{cases}.
\end{equation}
If $u=0$, then the condition holds trivially.
\begin{example}\label{exa_FR_HtoW_smallpara}
	In the case $(p,b,f,|T|)$ is one of $\{(2,6,1,3),\,(2,6,3,9),(2,8,1,4)\}$, we check with Magma that there is an element $\gamma$ of $\F_{q^{b/2}}$ such that $\mathcal{T}=\{\gamma^{2^i}z:\,0\le i\le \frac{bf}{2}-1,\,z\in\F_{q}^*\}$ has size $bf(q-1)$ and \eqref{eqn_II_psiDcond_HtoW} holds. In these three cases, $\cM=\{\la v\ra_{\F_q}:\,H'(v)\in\mathcal{T}\}$ is a $bfq^{(d-b)/2}$-ovoid of $W(d-1,q)$ if $d/b$ is odd. They are projectively equivalent to the ones arising from the $m$-ovoids of $Q^-(d-1,q)$ in Example \ref{exa_FR_QMtoQM_smallpara}, cf. Remark \ref{rem_2Fact} (2).
\end{example}

\begin{remark}\label{rem_I_HtoW_b2}
	Take notation as above, and suppose that $b=2$ and $\cM$ is an $m$-ovoid. In this case, we must have $\mathcal{T}\in\{\F_q^*,\square,\F_q^*\setminus\square\}$. Since $m=\frac{|\mathcal{T}|}{q-1}q^{(d-b)/2}$ is an integer, we deduce that $\mathcal{T}=\F_q^*$, i.e., $\cM=\{\la v\ra:\,H'(v)\ne 0\}$. It is indeed a transitive $q^{(d-b)/2}$-ovoid.
\end{remark}

\begin{lemma}\label{lem_II_HtoW_qodd}
Take notation as above, and assume that $b=4$ and $q$ is odd. Then the elements of $\mathcal{T}$ can not be all squares or nonsquares of $\F_{q^2}^*$.	
\end{lemma}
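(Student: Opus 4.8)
The plan is to argue by contradiction from the character-sum criterion \eqref{eqn_II_psiDcond_HtoW}, the decisive input being a sign dictated by the defining relation $\lambda+\lambda^{q^{2}}=0$.

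First I would record the following fact about $\lambda\in\F_{q^{4}}^{*}$. From $\lambda^{q^{2}}=-\lambda$ we get $(\lambda^{2})^{q^{2}}=\lambda^{2}$, so $\lambda^{2}\in\F_{q^{2}}^{*}$, and $(\lambda^{2})^{(q^{2}-1)/2}=\lambda^{q^{2}-1}=-1$, so $\lambda^{2}$ is a nonsquare of $\F_{q^{2}}^{*}$; since $q$ is odd, $-1$ is a square of $\F_{q^{2}}$, hence $-\lambda^{2}$ is a nonsquare of $\F_{q^{2}}^{*}$. Writing $\chi$ for the quadratic character of $\F_{q^{2}}^{*}$, this reads $\chi(-\lambda^{2})=-1$. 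Now suppose, for contradiction, that $\chi$ is constant on $\mathcal{T}$, say $\chi(t)=\varepsilon\in\{1,-1\}$ for all $t\in\mathcal{T}$; note $\mathcal{T}\neq\emptyset$ since $\cM$ is an $m$-ovoid. Put $C_{\varepsilon}=\{u\in\F_{q^{2}}^{*}:\chi(u)=\varepsilon\}$, so $|C_{\varepsilon}|=\tfrac{q^{2}-1}{2}$ and $\mathcal{T}\subseteq C_{\varepsilon}$.

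Next I would sum the identity \eqref{eqn_II_psiDcond_HtoW} over $u\in C_{\varepsilon}$. On the right the $|\mathcal{T}|$ values with $u\in\mathcal{T}$ each contribute $q^{2}-|\mathcal{T}|$ and the remaining $\tfrac{q^{2}-1}{2}-|\mathcal{T}|$ values contribute $-|\mathcal{T}|$, for a total of $|\mathcal{T}|\cdot\frac{q^{2}+1}{2}$. On the left I would interchange summation, write $K(\psi_{\F_{q^{2}}},t,-\lambda^{2}u)=\sum_{x\in\F_{q^{2}}^{*}}\psi_{\F_{q^{2}}}(tx-\lambda^{2}ux^{-1})$, and evaluate $\sum_{u\in C_{\varepsilon}}\psi_{\F_{q^{2}}}(cu)=\tfrac12(-1+\varepsilon\chi(c)G)$ for $c\neq 0$, where $G=\sum_{y\in\F_{q^{2}}}\chi(y)\psi_{\F_{q^{2}}}(y)$ is the quadratic Gauss sum of $\F_{q^{2}}$, which satisfies $G^{2}=\chi(-1)q^{2}=q^{2}$ because $q$ is odd. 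Taking $c=-\lambda^{2}x^{-1}$ gives $\chi(c)=\chi(-\lambda^{2})\chi(x)=-\chi(x)$, and combining this with $\sum_{x\in\F_{q^{2}}^{*}}\chi(x)\psi_{\F_{q^{2}}}(tx)=\chi(t)G$ collapses the left-hand side to $\sum_{t\in\mathcal{T}}\tfrac12(1-\varepsilon\chi(t)q^{2})=|\mathcal{T}|\cdot\frac{1-q^{2}}{2}$, using $\chi(t)=\varepsilon$ on $\mathcal{T}$. Equating the two evaluations and dividing by $|\mathcal{T}|$ forces $q^{2}=0$, which is absurd; therefore $\chi$ is not constant on $\mathcal{T}$, as required.

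The Gauss-sum manipulations in the last paragraph are of exactly the kind already carried out in Lemma \ref{lem_psiD_Herm'} and Proposition \ref{prop_Ellip_Tbound}, so the single load-bearing observation — and the only place the hypothesis on $\lambda$ enters — is the sign $\chi(-\lambda^{2})=-1$. This is precisely what makes summing over the \emph{matching} coset $C_{\varepsilon}$, rather than an arbitrary one, produce two incompatible values; had $-\lambda^{2}$ been a square, the two sides would have agreed and no contradiction would arise.
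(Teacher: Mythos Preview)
Your proof is correct, and it is genuinely different from the paper's.

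The paper works at the level of $\F_q$: it decomposes $\mathcal{T}$ and $\F_{q^2}^*$ into $\square$-cosets (squares of $\F_q^*$), rewrites the Kloosterman sums via the trace $\tr_{q^2/q}$ and the quadratic character $\eta$ of $\F_q^*$, and after a lengthy expansion obtains the closed form
\[
4\Delta_u=-2(q-1)|T|+q\,S_u+2q^2\cdot\#\{t\in T:\,t\in\F_q^*\lambda^{2}u^{-1}\},
\]
with $S_u=\sum_{r\in R,\,t\in T}\eta(\tr(rt))\eta(\tr(\lambda^2 u r^{-1}))$. It then splits into the cases $\mathcal{T}$ is or is not $\F_q^*$-invariant, and in each case exhibits a pair $u,t\in\mathcal{T}$ whose product lies in $\F_q^*\lambda^2$, hence is a nonsquare of $\F_{q^2}^*$.

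You instead work at the level of $\F_{q^2}$: summing \eqref{eqn_II_psiDcond_HtoW} over the single quadratic coset $C_\varepsilon$ and applying the quadratic Gauss sum of $\F_{q^2}$ (with $G^2=q^2$) collapses everything to the equation $\frac{1-q^2}{2}=\frac{1+q^2}{2}$, a contradiction with no case split. This is considerably cleaner for the lemma as stated. The paper's heavier computation is not wasted, however: the same formula for $4\Delta_u$ is what proves Theorem~\ref{thm_II_HtoWcons}\,(1), the exact characterisation $\mathcal{T}=\{\lambda^2 t^{-1}:t\in\mathcal{T}\}$ in the $\F_q^*$-invariant case, which your averaging argument does not see.
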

\begin{proof}
Take a complete set of $\square$-coset representation $R$ of $\F_{q^2}^*$, and set $T=R\cap \mathcal{T}$. We have $\F_{q^2}^*=\square\cdot R$, and $\mathcal{T}=\square\cdot T$. Since $\lambda+\lambda^{q^2}=0$, we deduce that $\lambda^2$ is a nonsquare of $\F_{q^2}$, and $\delta:=\lambda^{1+q}$ is in $\F_{q^2}$ with relative trace $0$ to $\F_q$. Write $E=\F_{q^2}$, $F=\F_q$, $\tr=\tr_{q^2/q}$, and let $\eta$ be the quadratic character of $\F_{q}^*$. We set $\eta(0):=0$. We observe that $\tr(x)=0$ if and only if $x\in\F_q\cdot\delta$. Set $\Delta_u:=\sum_{t\in\mathcal{T}} K(\psi_{E},t,-\lambda^2u)$. The condition \eqref{eqn_II_psiDcond_HtoW} holds trivially when $u=0$, so assume that $u\ne 0$.  We compute that
	\begin{align*}
		\Delta_u&=\frac{1}{4}\sum_{r\in R,t\in T}\sum_{y,z\in F^*}\psi_E(y^2tz^2r-\lambda^2ur^{-1}z^{-2})=\frac{1}{4}\sum_{r\in R,t\in T}\sum_{y\in F^*}\psi_E(y^2tr)\sum_{z\in F^*}\psi_E(-\lambda^2ur^{-1}z^{-2})\\
		&=\frac{1}{4}\sum_{r\in R,t\in T}\sum_{y\in F^*}\psi_F(y^2\tr(tr))\sum_{z\in F^*}\psi_F(z^{-2}\tr(-\lambda^2ur^{-1})).
	\end{align*}
	Here we replaced the variable $yz$ by $y$ in the second equality. We have $\sum_{y\in F^*}\psi_F(y^2\tr(tr))=-1+G_F(\eta)\eta(\tr(rt))+q[[\tr(rt)=0]]$, where $G_F(\eta)$ is the Gauss sum associated to $\eta$, cf. \cite{LidlFF}. The sum $\sum_{z\in F^*}\psi_F(z^{-2}\tr(-\lambda^2ur^{-1}))$ has a similar expression. After expansion and simplification, we deduce that
	\begin{align*}
		4\Delta_{u}=-2(q-1)|T|+q\cdot S_u+2q^2\cdot\#\{t\in T:\,t\in \F_q^*\cdot\lambda^{2}u^{-1}\}
	\end{align*}
	with $S_u=\sum_{r\in R,t\in T}\eta(\tr(rt))\cdot\eta(\tr(\lambda^2ur^{-1}))$. Here, most terms vanish due to the fact that there are exactly two elements $r\in R$ such that $\tr(\lambda^2ur^{-1})=0$ (or $\tr(rt)=0$) which differ by a nonsquare of $\F_q$. For instance, $\sum_{r\in R}\eta(\tr(rt))[[\tr(\lambda^2ur^{-1})=0]]=0$ for each $t\in T$. The value of $4\Delta_u$ should be equal to $4q^2-2(q-1)|T|$ or $-2(q-1)|T|$ according as $u\in\mathcal{T}$ or not by \eqref{eqn_II_psiDcond_HtoW}.

	First suppose that $\mathcal{T}$ is not $\F_q^*$-invariant. Then there is $u\in\mathcal{T}$ such that $\delta^2u\not\in\mathcal{T}$. From the expressions of $\Delta_u$ and $\Delta_{\delta^2u}$ and the fact $S_{\delta^2u}=\eta(\delta^2)S_u=-S_u$ and $\delta^2\in\F_{q}^{*}$, we deduce that $S_u+2q\cdot\#\{t\in T:\,t\in \F_q^*\lambda^{2}u^{-1}\}=4q$, $-S_u+2q\cdot\#\{t\in T:\,t\in \F_q^*\cdot\lambda^{2}u^{-1}\}=0$. It follows that $S_u=2q$, and there is exactly one $t\in T$ such that $t\in \F_q^*\cdot\lambda^{2}u^{-1}$. It follows that $ut$ is in $\F_q^*\cdot\lambda^2$ whose elements are nonsquares of $\F_{q^2}^*$. This proves the claim in this case.
	
	Next suppose that $\mathcal{T}$ is $\F_q^*$-invariant. In this case, $S_u=0$ by the fact that  both $\square\cdot t$ and $\square\cdot\delta^2t$ are contained in $\mathcal{T}$ and $\eta(\delta^2)=-1$. The condition for $\cM$ to be an $m$-ovoid then reduces to $\#\{t\in T:\,t\in \F_q^*\lambda^{2}u^{-1}\}=2$ or $0$ according as $u$ is in $\mathcal{T}$ or not. In other words, $\lambda^{2}u^{-1}\in\mathcal{T}$ if and only if $u\in\mathcal{T}$. The product of $\lambda^{2}u^{-1}$ and $u$ is a nonsquare of $\F_{q^2}^*$. This completes the proof.
\end{proof}

\begin{thm}\label{thm_II_HtoWcons}
Take notation as above and assume that $b=4$. Set $\lambda=1$ if $q$ is even.
	\begin{enumerate}
		\item[(1)] If $q$ is odd and $\mathcal{T}$ is $\F_q^*$-invariant, then $\cM$ is a $\frac{|\mathcal{T}|}{q-1}q^{d/2-2}$-ovoid if and only if   $\mathcal{T}=\{\lambda^2t^{-1}:\,t\in\mathcal{T}\}$;
		\item[(2)] If $q$ is even, then $\cM$ is a $\frac{|\mathcal{T}|}{q-1}q^{d/2-2}$-ovoid if and only if $\mathcal{T}=\{t^{-1}:\,t\in\mathcal{T}\}$.
	\end{enumerate}
\end{thm}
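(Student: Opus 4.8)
The plan is to derive both parts directly from the Kloosterman-sum criterion \eqref{eqn_II_psiDcond_HtoW}, which already encodes the ``$m$-ovoid'' property as a statement about the value of $\Delta_u:=\sum_{t\in\mathcal{T}}K(\psi_{\F_{q^2}},t,-\lambda^2 u)$ for each $u\in\F_{q^2}^*$ (the case $u=0$ being trivial). For part (1) the value of $\Delta_u$ has essentially already been computed inside the proof of Lemma \ref{lem_II_HtoW_qodd}, so the task is only to read off its consequence in the $\F_q^*$-invariant subcase; for part (2) I would run the analogous, and cleaner, computation in characteristic $2$.

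For part (1): I would quote the formula displayed near the end of the proof of Lemma \ref{lem_II_HtoW_qodd}, which gives $S_u=0$ when $\mathcal{T}$ is $\F_q^*$-invariant and hence $\Delta_u=\tfrac{q^2}{2}N_u-|\mathcal{T}|$, where $N_u:=\#\{t\in T:\,t\in\F_q^*\cdot\lambda^2 u^{-1}\}$. Since $\mathcal{T}$ is both $\square$- and $\F_q^*$-invariant, the $\F_q^*$-coset $\F_q^*\cdot\lambda^2 u^{-1}$ either lies in $\mathcal{T}$, in which case it accounts for its two $\square$-cosets among the representatives $T$ and so $N_u=2$, or is disjoint from $\mathcal{T}$, in which case $N_u=0$; and the first alternative holds precisely when $\lambda^2 u^{-1}\in\mathcal{T}$. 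Matching $\Delta_u\in\{q^2-|\mathcal{T}|,\,-|\mathcal{T}|\}$ against the right-hand side of \eqref{eqn_II_psiDcond_HtoW} then shows that $\cM$ is the required ovoid if and only if $\lambda^2 u^{-1}\in\mathcal{T}\Leftrightarrow u\in\mathcal{T}$ for every $u\in\F_{q^2}^*$, that is, if and only if $\mathcal{T}=\{\lambda^2 t^{-1}:\,t\in\mathcal{T}\}$ (here $\lambda^2\in\F_{q^2}^*$ because $\lambda^{q^2}=-\lambda$).

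For part (2): I would set $\lambda=1$ and write $E=\F_{q^2}$, so that the Kloosterman argument $-\lambda^2 u$ in \eqref{eqn_II_psiDcond_HtoW} is simply $u$ and $\square=\F_q^*$. Writing $\mathcal{T}=\F_q^*\cdot T$, interchanging the sum over $\F_q^*$ with the $E^*$-sum in each Kloosterman sum, and using the elementary identity $\sum_{r\in\F_q^*}\psi_{\F_q}(rc)=q[[c=0]]-1$ together with the characteristic-$2$ fact $\ker\tr_{E/\F_q}=\F_q$ (since $z+z^q=0\Leftrightarrow z\in\F_q$), which turns $\{x\in E^*:\,\tr_{E/\F_q}(tx)=0\}$ into $\F_q^*\cdot t^{-1}$, a short reindexing yields $\Delta_u=q^2\,\#\{t\in T:\,t\in\F_q^*\cdot u^{-1}\}-(q-1)|T|$. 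Because $T$ is a set of $\F_q^*$-coset representatives of the $\F_q^*$-invariant set $\mathcal{T}$, this count is $[[u^{-1}\in\mathcal{T}]]$. Substituting into \eqref{eqn_II_psiDcond_HtoW} and using $|\mathcal{T}|=(q-1)|T|$, I would conclude that $\cM$ is the required ovoid if and only if $u^{-1}\in\mathcal{T}\Leftrightarrow u\in\mathcal{T}$ for all $u\in\F_{q^2}^*$, i.e.\ $\mathcal{T}=\{t^{-1}:\,t\in\mathcal{T}\}$.

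I do not anticipate a serious obstacle: both computations are of the same routine Gauss/Kloosterman-sum type already used for Lemma \ref{lem_II_HtoW_qodd}. The points that need genuine care are the index bookkeeping — $[\F_q^*:\square]$ equals $2$ for odd $q$ but $1$ for even $q$, which is exactly why the coefficient $\tfrac{q^2}{2}$ in the odd case becomes $q^2$ in the even case — and the identification of $\ker\tr_{E/\F_q}$ in characteristic $2$ as $\F_q$ itself rather than a line $\F_q\delta$ with $\delta\notin\F_q$; the latter is what makes the even case collapse to pure indicator sums, with no Gauss-sum term, and hence yields the particularly clean form of part (2).
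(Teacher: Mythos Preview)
Your proposal is correct and follows essentially the same route as the paper: part (1) is obtained by reading off the $\F_q^*$-invariant subcase from the formula $4\Delta_u=-2(q-1)|T|+q S_u+2q^2 N_u$ in the proof of Lemma \ref{lem_II_HtoW_qodd}, and part (2) is obtained by the same reindexed Kloosterman-sum computation using $\ker\tr_{\F_{q^2}/\F_q}=\F_q$ in characteristic $2$ to reach $\Delta_u=q^2\,\#\{t\in T:\,ut\in\F_q^*\}-(q-1)|T|$. The paper's proof and yours differ only in notation.
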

\begin{proof}
	(1) is established in the last paragraph of the proof of Lemma \ref{lem_II_HtoW_qodd}.
	We now prove (2). Suppose that $q$ is even. In this case, $\square=\F_q^*$. Take $T$ as a set of $\F_q^*$-coset representatives of $\mathcal{T}$ such that $\mathcal{T}=\F_q^*\cdot T$. Write $T=\{\lambda_1,\dots,\lambda_{|T|}\}$, $E=\F_{q^2}$ and $\tr=\tr_{q^2/q}$. Since $q$ is even, $\tr(y)=0$ if and only if $y\in\F_{q}$. We now calculate the left hand side of \eqref{eqn_II_psiDcond_HtoW}. It holds trivially if $u=0$, so we assume $u\ne 0$. We have
	\begin{align*}
		\sum_{t\in T}\sum_{y\in\F_{q}^{*}}K(\psi_{E},yt,u)&=\sum_{z\in E^{*}}\sum_{t\in T}\psi_{E}(uz^{-1})\sum_{y\in\F_{q}^{*}}\psi_{\F_{q}}(\tr(tz)y)\\&=\sum_{z\in E^{*}}\sum_{t\in T}\psi_{E}(uz^{-1})(q[[tz\in\F_q^*]]-1)\\&=-\sum_{t\in T}\sum_{z\in E^{*}}\psi_{E}(uz^{-1})+q\sum_{c\in \F_q^{*}}\sum_{t\in T}\psi_{\F_q}(c\tr(ut))\\&=|T|+q^2\cdot\#\{t\in T:\,ut\in\F_q^*\}-q|T|.
	\end{align*}
Therefore, \eqref{eqn_II_psiDcond_HtoW} is equivalent to the condition that $\#\{\lambda\in T:\,u\lambda\in\F_{q}^{*}\}=1$ or $0$ according as $u^{-1}$ is in $\mathcal{T}$ or not. The claim now follows.
\end{proof}

In Section \ref{sec_extField}, we will see that Lemma \ref{lem_II_HtoW_qodd} implies that there is no transitive $m$-ovoids in Theorem \ref{thm_II_HtoWcons} (1). We shall also need the following two results in Section \ref{sec_extField}. Let $\Gamma^{\#}(V',\kappa')$ be as defined in \eqref{eqn_GamV'def}.
\begin{lemma}\label{lem_HtoW_Tbound}
Take notation as above, and assume that there is a group $H\le\Gamma^{\#}(V',\kappa')$ that is transitive on $\cM$. Then $|T|$ divides $bf/2$ or $bf$ according as $q$ is even or odd. Moreover, if $q$ is odd and $|T|$ does not divide $\frac{1}{2}bf$, then $\mathcal{T}$ is $\F_q^*$-invariant.
\end{lemma}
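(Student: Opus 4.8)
The plan is to exploit the fact that a transitive group $H \le \Gamma^{\#}(V',\kappa')$ acting on $\cM$ must carry the distinguished coset $\F_q^*\cdot t_0$ (for some fixed $t_0 \in T$) around all the $\F_q^*$-cosets in $\mathcal{T}$, which will force $|T|$ to divide the order of a cyclic quotient governing how the Hermitian form transforms. Concretely, any $g \in \Gamma^{\#}(V',\kappa')$ satisfies $\kappa'(xg,yg) = \lambda(g)\kappa'(x,y)^{\sigma(g)}$ with $\lambda(g) \in \F_q^*$ and $\sigma(g) \in \Gal(\F_{q^b}/\F_p)$; hence $H'(vg) = \lambda(g) H'(v)^{\sigma(g)}$. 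Since $H$ permutes the vectors $v$ with $\la v\ra \in \cM$, i.e. those with $H'(v) \in \mathcal{T}$, the group $H$ induces via $v \mapsto H'(v)$ a permutation action on $\mathcal{T}$, compatible with the map $t \mapsto \lambda(g)\, t^{\sigma(g)}$. Because $\mathcal{T} = \square \cdot T$ with $\square$ the set of nonzero squares of $\F_q$ (all of $\F_q^*$ when $q$ is even), and the maps $t \mapsto \lambda(g)\, t^{\sigma(g)}$ preserve the partition of $\mathcal{T}$ into $\square$-cosets, we get an induced transitive action — transitivity of $H$ on $\cM$ forces transitivity on the set $T$ of $\square$-coset representatives — of the group $\{(\lambda(g),\sigma(g))\}$ on $T$.

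\textbf{The key steps, in order.} First I would make precise the homomorphism $\pi\colon H \to \F_q^* \rtimes \Gal(\F_{q^b}/\F_p)$, $g \mapsto (\lambda(g),\sigma(g))$, and observe that its image acts on $\mathcal{T}$ via $t \mapsto \lambda(g) t^{\sigma(g)}$, with the induced action on the $\square$-cosets $T$ being transitive (this is where transitivity of $H$ on $\cM$ is used: two points $\la v_1\ra,\la v_2\ra$ in distinct $\square$-cosets of $\mathcal{T}$ are swapped by some element of $H$, and the $\square$-shift is absorbed). Second, since the action on $T$ factors through $\F_q^*\backslash(\F_q^* \rtimes \Gal(\F_{q^b}/\F_p)) \cong \Gal(\F_{q^b}/\F_p)$ when $q$ is even (as $\square = \F_q^*$), and through $(\F_q^*/\square)\rtimes\Gal$ — a group of order $2\cdot bf$ — when $q$ is odd, the orbit-stabilizer theorem gives $|T| \mid bf$ ($q$ odd) or $|T| \mid bf/2$ wait — more carefully: when $q$ is even, $\Gal(\F_{q^b}/\F_p)$ has order $bf$, but the action on $T$ is by field automorphisms which must also be compatible with $\F_{q^{b/2}}$ (the ambient field of $\mathcal{T}$), cutting the effective group to order $bf/2$; when $q$ is odd the extra factor $\F_q^*/\square$ of order $2$ enters, giving order $bf$. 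So $|T|$ divides $bf/2$ or $bf$ accordingly. Third, for the ``moreover'' clause: if $q$ is odd and $|T| \nmid \tfrac{1}{2}bf$, then the transitive action on $T$ cannot come purely from the index-$bf/2$ Galois-type part; the $\F_q^*/\square$ factor must act nontrivially on $T$. I would argue that an element $g$ with $\eta(\lambda(g)) = -1$ (nonsquare multiplier) moving a $\square$-coset then shows $\mathcal{T}$ is closed under multiplication by $\F_q^*$, since $\lambda(g)\cdot\square$ together with $\square$ covers $\F_q^*$; hence $\mathcal{T} = \F_q^*\cdot\mathcal{T}$ is $\F_q^*$-invariant.

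\textbf{The main obstacle** I anticipate is the precise bookkeeping of which subgroup of $\F_q^* \rtimes \Gal(\F_{q^b}/\F_p)$ actually acts, and how its effective order interacts with the fact that $\mathcal{T} \subseteq \F_{q^{b/2}}^*$ (not all of $\F_{q^b}^*$): a field automorphism $\sigma$ of $\F_{q^b}$ need not preserve the subfield $\F_{q^{b/2}}$ unless it normalizes it, and keeping track of the action on $\F_{q^{b/2}}^*$ versus $\F_{q^b}^*$ is where an off-by-factor-of-2 error would creep in. I would handle this by restricting attention to the induced action on $H'(D) = \mathcal{T} \subseteq \F_{q^{b/2}}^*$ from the outset and noting that $\Gal(\F_{q^{b/2}}/\F_p)$ has order $\tfrac{bf}{2}$, so the relevant group has order dividing $\tfrac{bf}{2}$ (even case) or $bf$ (odd case, with the nonsquare multiplier adjoined). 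A secondary subtlety is ensuring the $\square$-coset structure is genuinely respected — this follows because $\lambda(g) \in \F_q^*$ and $t \mapsto t^{\sigma(g)}$ both send squares to squares in the relevant sense, so the partition into $\square$-cosets of $\mathcal{T}$ descends to a well-defined action on $T$, which is exactly what the orbit-counting argument needs.
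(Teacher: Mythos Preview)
Your approach is essentially the same as the paper's: pass to the induced action of $H$ on the set $X=\{\square\cdot t:\,t\in T\}$ of $\square$-cosets, observe that this action factors through a group of order dividing $bf/2$ (from $\Gal(\F_{q^{b/2}}/\F_p)$) times $|\F_q^*/\square|$, and apply orbit--stabilizer. One small point to tighten in your ``moreover'' step: it is not enough to find \emph{some} $g\in H$ with $\lambda(g)$ a nonsquare, because the action on $\mathcal{T}$ is $t\mapsto\lambda(g)\,t^{\sigma(g)}$ and the $\sigma(g)$-twist could interfere. What you need (and what the paper does) is the projection argument: if $|\tilde H|\nmid bf/2$ then the kernel of the projection $\tilde H\to C_{bf/2}$ is nontrivial, so $\tilde H$ contains the full $C_2$ factor; this yields $g\in H$ with $\lambda(g)$ a nonsquare \emph{and} $\sigma(g)$ acting trivially on $\F_{q^{b/2}}$, whence $t\in\mathcal{T}\Rightarrow\lambda(g)t\in\mathcal{T}$ directly. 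Also, your worry that a field automorphism of $\F_{q^b}$ might not preserve $\F_{q^{b/2}}$ is unfounded---every subfield of a finite field is characteristic---so the restriction map $\Gal(\F_{q^b}/\F_p)\to\Gal(\F_{q^{b/2}}/\F_p)$ is simply the canonical surjection of cyclic groups.
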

\begin{proof}
We only consider the case where $q$ is odd for clarity, and the other case is similar. Since $H$ is transitive on $\cM$, its induced action on $X:=\{\square\cdot H'(v):\,\la v\ra\in\cM\}=\{\square\cdot t: \,t\in T\}$ is transitive. Let $\tilde{H}$ be the corresponding group induced on $X$. Then it is a subgroup of $C_2\times C_{bf/2}$, where  $C_2$ is contributed by the similarities in $H$ and  $C_{bf/2}$ is induced from the Galois group of $\F_{q^b}$. Since $X$ is a single orbit of $\tilde{H}$ and $|X|=|T|$, we deduce that $|T|$ divides $bf$.

Now further suppose that $|T|$ does not divide $\frac{1}{2}bf$. Then $|\tilde{H}|$ does not divide $\frac{1}{2}bf$ either. By considering the projection to the second factor $C_{bf/2}$, we deduce that $\tilde{H}$ contains the first factor $C_2$. Therefore, there is an element $g\in H$ such that $\lambda(g)$ is a nonsquare of $\F_{q^{b/2}}^*$, cf. \eqref{eqn_Gam}. For each $t\in T$, the union of $\square\cdot t$ and $\square\cdot\lambda(g)t$ is $\F_q^*\cdot t$. We thus deduce that $\mathcal{T}$ is the union of $\F_q^*$-cosets as desired.
\end{proof}

\begin{thm}\label{thm_II_new}
Take notation as above and assume that $b=6$. If $\mathcal{T}$ is $\F_q^*$-invariant, then we have $|T|\ge q$ or $2q$ according as $q$ is even or odd.
\end{thm}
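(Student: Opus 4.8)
Here is a plan.

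The plan is to reduce the $m$-ovoid condition to a single symmetry relation for a character sum, then push that relation down to $\PG(2,q)$ using the $\F_q^*$-invariance of $\mathcal{T}$, and finally exclude a small value of $|T|$ by a rigidity argument for the resulting line arrangement. Throughout write $\F:=\F_{q^3}$, and let $k$ be the number of $\F_q^*$-cosets contained in $\mathcal{T}$, so that $|\mathcal{T}|=(q-1)k$ and $|T|=k$ or $2k$ according as $q$ is even or odd; it is then enough to prove $k\ge q$. Assume $\cM$ is an $m$-ovoid, i.e.\ \eqref{eqn_II_psiDcond_HtoW} holds. For $a\in\F$ put $\Psi(a):=\sum_{t\in\mathcal{T}}\psi_{\F}(at)$, the additive Fourier transform of $\mathbf 1_{\mathcal{T}}$; it is real-valued since $-\mathcal{T}=\mathcal{T}$. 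Set $h(u):=\sum_{t\in\mathcal{T}}K(\psi_{\F},t,-\lambda^2u)=\sum_{t\in\mathcal{T}}\sum_{x\in\F^{*}}\psi_{\F}(tx-\lambda^2ux^{-1})$. Computing the Fourier transform of $h$ over $\F$ and using $\sum_{u\in\F}\psi_{\F}(u(\xi-\lambda^2x^{-1}))=q^3[[\xi=\lambda^2x^{-1}]]$ gives $\widehat h(\xi)=q^3\Psi(\lambda^2\xi^{-1})$ for $\xi\ne0$ and $\widehat h(0)=0$, whereas the Fourier transform of the right-hand side of \eqref{eqn_II_psiDcond_HtoW}, namely $u\mapsto q^3\mathbf 1_{\mathcal{T}}(u)-|\mathcal{T}|$, equals $q^3\Psi(\xi)$ for $\xi\ne0$ and $0$ for $\xi=0$. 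Hence \eqref{eqn_II_psiDcond_HtoW} is equivalent to
\begin{equation*}
\Psi(\lambda^2a^{-1})=\Psi(a)\qquad\text{for all }a\in\F^{*}
\end{equation*}
(note $\lambda^2\in\F$, since $\lambda^{q^3}=-\lambda$).

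Next I would descend this to $\PG(2,q)=\F^{*}/\F_q^{*}$. Since $\mathcal{T}$ is $\F_q^*$-invariant, for $a\ne0$ one computes $\Psi(a)=q\,\phi(\bar a)-k$, where $\overline{\mathcal{T}}$ is the image of $\mathcal{T}$, $L_a:=\{x\in\F:\tr_{q^3/q}(ax)=0\}$ is the line of $\PG(2,q)$ polar to $\bar a$ for the trace form, and $\phi(\bar a):=|\overline{\mathcal{T}}\cap L_a|$ equals the number of lines through $\bar a$ in the $k$-line arrangement $\mathcal{L}:=\{L_t:\bar t\in\overline{\mathcal{T}}\}$ dual to $\overline{\mathcal{T}}$. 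Thus $\Psi$ descends to $\PG(2,q)$ and the displayed relation becomes: $\phi$ is invariant under $\rho\colon\bar a\mapsto\overline{\lambda^2a^{-1}}$. Now $\rho$ is the composite of the inversion $\bar a\mapsto\bar a^{-1}$ with a translation of the cyclic Singer group $\F^{*}/\F_q^{*}\cong\Z_{q^2+q+1}$, and since $q^2+q+1$ is odd, $\rho$ is an involution with a unique fixed point $\bar a_0$. Consequently every level set $\{\bar a:\phi(\bar a)=j\}$ with $j\ne\phi(\bar a_0)$ has even cardinality, and one also has the incidence identities $\sum_{\bar a}\phi(\bar a)=k(q+1)$ and $\sum_{\bar a}\phi(\bar a)(\phi(\bar a)-1)=k(k-1)$.

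Finally I would suppose $k\le q-1$ and seek a contradiction, splitting into two cases. If $\overline{\mathcal{T}}$ lies on a line $\ell$, then $\mathcal{L}$ is a pencil of $k$ lines through $P:=\ell^{\vee}$, the points lying on at least two lines of $\mathcal{L}$ form the single point $\{P\}$ (for $k\ge2$), and $\rho$-invariance forces $P=\bar a_0$ and hence that $U:=\bigcup_{L\in\mathcal{L}}L$ is $\rho$-invariant; normalising $\bar a_0$ to be the identity of the Singer group turns $\rho$ into the negation map, so $U$ would be symmetric under negation, which is impossible for a proper sub-union of a pencil because the negative of a line (a translate of the Desarguesian planar difference set) is never a line — the difference set is not reversible, as the inverse of a hyperplane of $\F$ is not an $\F_q$-subspace; the case $k=1$ is the same argument with $U$ a single line. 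If instead $\overline{\mathcal{T}}$ spans $\PG(2,q)$, then $\mathcal{L}$ consists of $k$ non-concurrent lines, $\{\phi=k\}=\varnothing$, and both $\{\phi\ge1\}=\bigcup_{L\in\mathcal{L}}L$ and its set of multiple points are $\rho$-invariant; combining the parity observation, the two incidence identities, and again the fact that $\rho$ carries no line to a line should force $k\ge q$.

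I expect the rigidity in this last step to be the main obstacle. The naive Weil estimate for the twofold Kloosterman sum $\sum_{t_1,t_2\in\mathcal{T}}K(\psi_{\F},1,\lambda^2t_1t_2)$ produced by the symmetry relation only gives $k\gg q^{1/2}$, so the entire gain up to $k\ge q$ must come from exploiting the $\F_q^*$-invariance, i.e.\ the combinatorics inside $\PG(2,q)$. I anticipate the collinear case reducing cleanly to the non-reversibility of the Singer difference set (which is provable from the fact that its additive Fourier coefficients are Gauss sums, hence not real), whereas the spanning case — in which $\rho$ is genuinely not induced by a collineation of $\PG(2,q)$ — will need the most careful argument and is where the real work lies.
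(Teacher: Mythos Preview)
Your Fourier reformulation is correct: the $m$-ovoid condition \eqref{eqn_II_psiDcond_HtoW} for $b=6$ is equivalent to $\Psi(\lambda^2 a^{-1})=\Psi(a)$ for all $a\in\F_{q^3}^*$, and after using the $\F_q^*$-invariance this descends to the symmetry $\phi\circ\rho=\phi$ on $\PG(2,q)$ exactly as you describe. The problem is everything after that. Your endgame is a genuine gap: the collinear case hinges on the claim that a sub-union of a pencil cannot be invariant under the translated negation $\rho$, but ``$-D$ is not a translate of $D$'' does not by itself exclude a \emph{union} of several lines through a point from being $\rho$-symmetric, and you give no argument for that. In the spanning case you yourself flag that the real work is undone; parity from the single fixed point of $\rho$ together with the two incidence identities $\sum\phi=k(q+1)$ and $\sum\binom{\phi}{2}=\binom{k}{2}$ simply does not pin down $k\ge q$ --- these identities do not even use $\rho$. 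So as it stands the proposal does not prove the theorem.

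The paper avoids Fourier transforms altogether and takes a much shorter, direct route. Expanding $\sum_{t\in\mathcal{T}}K(\psi_{\F_{q^3}},t,u)$ using $\mathcal{T}=\F_q^*\cdot T$ and summing the inner $\F_q^*$-variable first, one gets (for $u\in\mathcal{T}$) the exact count
\[
\#\{(z,t)\in R\times T:\ \tr(uz^{q+q^2})=0,\ \tr(tz)=0\}=q+|T|,
\]
where $R$ is a set of $\F_q^*$-coset representatives of $\F_{q^3}^*$. Now for fixed $u$ and $t$, the first equation $\tr(uz^{q+q^2})=0$ cuts out a \emph{conic} in $\PG(2,q)$ (since $z\mapsto z^qz^{q^2}$ is quadratic over $\F_q$) and the second cuts out a line, so their intersection has at most two points. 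Summing over $t\in T$ gives $q+|T|\le 2|T|$, i.e.\ $|T|\ge q$ (and the odd-$q$ case is handled analogously, giving $|T|\ge 2q$). The missing idea in your approach is precisely this conic--line intersection bound; your symmetry $\phi\circ\rho=\phi$ is an equivalent repackaging of the hypothesis, but it discards the geometric structure (the conic) that makes the bound immediate.
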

\begin{proof}
We only consider the case $q$ even for brevity, and the other case is similar. In this case, $\square=\F_q^*$. Set $E=\F_{q^3}$, and write $\tr:=\tr_{q^3/q}$. Take a complete set $R$ of $\F_q^*$-coset representatives of $\F_{q^3}^*$ such that $T=R\cap \mathcal{T}$. By the same calculation as in the proof of Theorem \ref{thm_II_HtoWcons}, the left hand side of \eqref{eqn_II_psiDcond_HtoW} equals
\begin{align*}
\sum_{t\in T}&\sum_{y\in\F_{q}^{*}}K(\psi_{E},yt,u)=-\sum_{t\in T}\sum_{z\in E^{*}}\psi_{E}(uz^{-1})+q\sum_{c\in\F_q^*}\sum_{z\in R}\sum_{t\in T}\psi_{\F_q}(c\tr(uz^{-1})) [[ \tr(tz)=0]]\\
&=|T|+q\sum_{c\in\F_q}\sum_{(z,t)\in R\times T}\psi_{\F_q}(c\tr(uz^{-1})) [[ \tr(tz)=0]]-q\sum_{(z,t)\in R\times T}[[ \tr(tz)=0]]\\
&=|T|+q^2\cdot\#\{(z,t)\in  R\times T:\, \tr(uz^{q+q^2})=0, \tr(tz)=0\}-q(q+1)|T|.
\end{align*}
In the last equality, we used the fact $\tr(uz^{q+q^2})=z^{1+q+q^2}\tr(uz^{-1})$. By \eqref{eqn_II_psiDcond_HtoW}, we deduce that $\#\{(z,t)\in  R\times T:\, \tr(uz^{q+q^2})=0, \tr(tz)=0 \}$ should be equal to $q+|T|$ or $|T|$ according as $u\in \mathcal{T}$ or not. Fix an element $u\in\mathcal{T}$. For each $t\in T$, $\#\{z\in R:\, \tr(uz^{q+q^2})=0, \tr(tz)=0\}$ is the intersection size of a conic and a line and thus is at most $2$. We deduce that  $q+|T|\le 2|T|$, i.e., $|T|\ge q$ as desired. This completes the proof.
\end{proof}

\section{Proof of the main results, I}
\label{sec:mainresults}

This and the next sections are devoted to the proof of Theorems \ref{thm_PGamUTransi}-\ref{thm_PGamSpTransi}. Let $V$ be a vector space of dimension $d$ over $\F_q$ equipped with a nondegenerate reflexive sesquilinear form or a quadratic form $\kappa$, and suppose that $d-1$ and $q$ are not both even in the case $\bO$. Let $\cP$ be the associated polar space and assume that its rank $r$ is at least $2$. Let $\theta_r$ be the ovoid number as in Table \ref{tab_valuede}. Let $e$ be the smallest integer such that $\theta_r|q^e-1$, which is listed in the last column of Table \ref{tab_valuede}. We observe that $\theta_r=p^{ef/2}+1$ in all six cases, where $q=p^f$ with $p$ prime. By the definition of $m$-ovoids, we need $r\ge 2$. We assume that
\[
e>2,\quad (p,ef)\ne(2,6),\
\]
so that $\Phi_{ef}^*(p)>1$ by Lemma \ref{lem_zsigmondy}.

\begin{remark}\label{rem_ege2Explain}
	By Table \ref{tab_valuede}, the condition $e>2$ only excludes $Q^+(3,q)$ from consideration. It is a grid, which is lack of geometric significance. The condition $(p,ef)\ne(2,6)$ further excludes $H(3,4)$, $W(5,2)$, $Q^-(5,2)$, $Q^+(7,2)$ from consideration, cf. Remark \ref{rem_end_sec4} (1).
\end{remark}

Let $\cM$ be a proper $m$-ovoid of $\cP$ with an automorphism group $H_0\le\textup{P}\Gamma(V,\kappa)$ such that $H_0$ is transitive on $\cM$ and its full preimage $H$ in $\Gamma(V,\kappa)$ is irreducible on $V$. Set
\[
G=H\cap \GL(V),\quad\overline{G}:=G/G\cap Z(\GL(V)).
\]
Since $H_0$ is transitive on $\cM$ and $\cM$ has size $m\theta_r=m(p^{ef/2}+1)$, we deduce that $\theta_r$ divides $|H_0|$. In particular, $\Phi_{ef}^*(p)$ divides $|H|$. Since $\gcd(ef,\Phi_{ef}^*(p))=1$ by Section \ref{sec_primdiv} and $[G:\,H]$ divides $ef$, we deduce that $\Phi_{ef}^*(p)$ divides $|G|$. From Table \ref{tab_valuede}, we check that $df/2<ef\le df$ holds in each case. Therefore, Theorem 3.1 of \cite{bamberg2008overgroups} is applicable to the group $G$, which classifies the subgroups of $\textup{GL}_d(q)$ whose orders are divisible by $\Phi_{e'f}^*(p)>1$ for some integer $e'$ such that $d/2<e'\le d$.\medskip

In the sequel, we follow the line of \cite[Theorem 3.1]{bamberg2008overgroups} to prove Theorems \ref{thm_PGamUTransi}-\ref{thm_PGamSpTransi} together. The extension field case is dealt with separately in Section \ref{sec_extField}, and the remaining cases are handled in this section.

\subsection{The classical examples}
\label{subsec_CL}
There are two cases as follows:
\begin{enumerate}
	\item The group $G$ satisfies that $G^{(\infty)}=\Omega(V,\kappa)$. Since $e>2$, we have $d\ge 2,3,4$ in the cases $\bS$, $\bU$, $\bO$ respectively, and so $\Omega(V,\kappa)$ is transitive on the nonzero singular vectors by \cite[Lemma 2.10.5]{kleidman1990subgroup}. In this case, there is no proper transitive $m$-ovoid.
	\item $\cP=W(d-1,q)$, and $G^{(\infty)}=H^{(\infty)}=\Omega^{\pm}_{d}(q)$, where $d$ and $q$ are even. In this case, $\Gamma \textup{O}^{+}_{d}(q)$ is not divisible by $\Phi_{ef}^*(p)$ upon direct check, so we must have $H^{(\infty)}=\Omega^{-}_{d}(q)$. Let $Q$ be a nondegenerate $H^{(\infty)}$-invariant quadratic form which is unique up to a scalar. Since $q$ is even, we have $\cP\setminus Q^{-}(d-1,q)=\{\la v\ra:\,Q(v)=1\}$, where $Q$ is the corresponding quadratic form of $Q^{-}(d-1,q)$. Again by \cite[Lemma 2.10.5]{kleidman1990subgroup}, $\Omega^{-}_{d}(q)$ is transitive on both $Q^{-}(d-1,q)$ and its complement, and they form  $\frac{q^{d/2-1}-1}{q-1}$- and $q^{d/2-1}$-ovoids of $W(d-1,q)$ respectively. This is Example \ref{exa_Classical}.
\end{enumerate}

\subsection{The imprimitive examples}
In this case, we must have $q=p$, $\Phi_{e}^{*}(p)=e+1$, and $G=H$ stabilizes a direct sum decomposition $ \mathcal{D}:\,V=U_{1}\oplus\ldots\oplus U_{d}$, where each $U_{i}$ has dimension $1$ and is nondegenerate. Moreover, $H$ is a subgroup of $\GL_{1}(q)\wr S_{d}$ in its product action and induces a primitive group on the factors $\{U_{1},\ldots,U_{d}\}$. The possible values of $q$, $e$ and $d$ are listed in Table \ref{tab_imp_qde}.
\begin{table}[!h]
	\begin{center}
		\caption{The feasible $(q,e,d)$'s in the imprimitive case}
		\label{tab_imp_qde}
		\begin{tabular}{ccc|ccc|ccc}
			\toprule
			$q$ & $e$ & $d$ & $q$ & $e$ & $d$ & $q$ & $e$ & $d$ \\
			\hline
			2 & 4 & 5,6,7 &2 & 18 & 19,\ldots,35 &3 &6 & 7,\ldots,11  \\
			2 & 10 & 11,\ldots,19 &  3 &4 & 5,6,7 & 5 &6 & 7,\ldots,11\\
			2 & 12 & 13,\ldots,23 &  &  & & &&\\
			\bottomrule
		\end{tabular}
	\end{center}
\end{table}	
Since $q$ is prime, $d>2$ and $\dim(U_i)=1$, we must be in the case $\bO$. Moreover, since each $U_i$ is nondegenerate, $q$ must be odd.  The group $H$ consists of similarities and it permutes the $U_i$'s transitively, so the $U_i$'s are isometric. We choose a proper vector $v_i$ from each $U_i$ such that $Q(v_i)=\lambda$ for some constant $\lambda\in\F_q^*$. Then $Q(x)=\lambda\sum_{i=1}^dx_i^2$, where $x=\sum_{i=1}^dx_iv_i$.
For each possible pair $(q,d)$, we determine the type of $Q$ and the value of $e$ by Table \ref{tab_valuede}. If $(q,d,e)$ is not in Table \ref{tab_imp_qde}, then the corresponding case can not occur.  The remaining possible cases are $Q(4,3)$, $Q^{+}(5,3)$, $Q(6,3)$, $Q(6,5)$, $Q^{+}(7,3)$ and $Q^{+}(7,5)$. In each case, we realize $\textup{PGO}^\epsilon_d(q)$ as the stabilizer of the corresponding quadric in $\PGL_{d}(q)$, and compute the stabilizer $N$ of $\{\la v_{i}\ra:\,1\leq i\leq6\}$ in $\textup{PGO}^\epsilon_d(q)$ by Magma \cite{Magma}.  In each case, we enumerate all the irreducible subgroups of $N$ whose order is divisible by $\theta_r$ and check whether any of their orbits form an $m$-ovoid. This leads to Example \ref{exa_Imprimitive}.\medskip

\subsection{The symplectic type examples}
In this case, $q=p$, $H=G$ normalizes an extraspecial $2$-group, and we have one of the following cases:
\begin{itemize}
	\item[(a)] $p=3$, $e=d=4$, and $G\leq(2_{-}^{1+4}. \textup{O}_{4}^{-}(2))\circ 2$;
	\item[(b)] $p=3$, $d=8$, $e=6$ and $G\leq(2_{+}^{1+6}.\textup{O}_{6}^{+}(2))\circ 2$;
	\item[(c)] $p=5$, $d=8$, $e=6$ and either $H\leq((4\circ2^{1+6}). \textup{Sp}_{6}(2))\circ 4$ or $G\leq(2_{+}^{1+6}.\textup{O}_{6}^{+}(2))\circ 4$.
\end{itemize}
In cases (a) and (b), the term $\circ 2$ can be omitted since it refers to the group of scalar multiples by $\F_3^*$ and is already contained in the center of the extraspecial $2$-groups. We assume that $q=p=3$. Take $D_{8}=\langle a,b:\,a^4=b^2=1,\,b^{-1}ab=a^{-1}\rangle$, $Q_{8}=\langle x,y:\,x^2=y^2,\,x^4=1,\,x^{-1}yx=y^{-1}\rangle$. Let $\rho_D$ and $\rho_Q$ be their representations as follows:
\begin{align*}
	\rho_D:&\,D_8\rightarrow \GL_2(3),
	\,a\mapsto \left( \begin{array}{cc} 0 & 1 \\-1 & 0 \\\end{array}\right),
	\, b\mapsto \left(\begin{array}{cc}                0 & 1 \\1 & 0 \\\end{array}
	\right),\\
	\rho_Q:&\,Q_8\rightarrow \GL_2(3),\,
	x\mapsto \left( \begin{array}{cc}0 & 1 \\ -1 & 0 \\\end{array}
	\right),\,y\mapsto \left(\begin{array}{cc}         1 & 1 \\1 & -1 \\
	\end{array}\right).
\end{align*}
The representation $\rho_D$ preserves the symmetric bilinear form $f_D(u,v)=u_1v_1+u_2v_2$, and $\rho_Q$ preserves the alternating bilinear form $f_Q(u,v)=u_1v_2-u_2v_1$.

In the case (a), the extraspecial 2-group $E=2_{-}^{1+4}=D_{8}\circ Q_{8}$, $V=\F_{3}^{2}\otimes\F_{3}^{2}$, and $E$ acts on $V$ via the representation $\rho_1:=\rho_D\otimes\rho_Q$. The group $H$ lies in the normalizer $N$ of $\rho_1(D)$ in $\GL_4(3)$ and preserves the alternating form $f_1:=f_D\otimes f_Q$ on $V$. Set $e_{0}=(1,0)$, $e_{1}=(0,1)$, and $e_{ij}=e_{i}\otimes e_{j}$ for $i,j\in\{0,\,1\}$. With respect to the basis $e_{00}$, $e_{01}$, $e_{10}$, $e_{11}$, we can compute the matrix form of the representation $\rho_1$.  By Magma \cite{Magma}, we compute the normalizer $N$ of $\rho_1(E)$ in $\GL_{4}(3)$ and see that there are $10$ subgroups of $N$ with order divisible by $3^2+1$. Six of them are transitive on points of $\cP$. Among the remaining $4$ subgroups of $N$, exactly one of order $120$ (i.e., $2.\Omega_4^-(2)$) gives rise to transitive $m$-ovoids. This group has two orbits on points of $W(3,3)$ and each is a $2$-ovoid of $W(3,3)$. This is Example \ref{exa_Symplectic}.

In the case (b), the extraspecial 2-group $E=2_{+}^{1+6}=D_{8}\circ D_{8}\circ D_{8}$, $V=\F_3^2\otimes\F_3^2\otimes\F_3^2$, $E$ acts on $V$ via the representation $\rho_2:=\rho_D\otimes\rho_D\otimes\rho_D$ and preserves the symmetric bilinear form $f_2=f_D\otimes f_D\otimes f_D$. The group $H$ lies in $N=N_{\GL_8(3)}(\rho_2(E))=2_{+}^{1+6}\cdot \textup{O}_{6}^{+}(2)$, and the polar space is $\cP=Q^+(7,3)$. Set $e_{0}=(1,0)$, $e_{1}=(0,1)$, and $e_{ijk}=e_{i}\otimes e_{j}\otimes e_k$ for $i,j,k\in\{0,\,1\}$. We can similarly work out the matrix form of $\rho_2$ with respect to this basis, and compute $N$ as the normaliser of $\rho_2(E)$ in $\GL_8(3)$ in Magma. In total, there are $65$ subgroups of $N$ having orders divisible by $3^3+1$. Among these groups, there are $5$ transitive subgroups; none of the remaining $60$ subgroups gives rise to transitive $m$-ovoids of $Q^{+}(7,3)$.

In the case (c), we are in case $\bL$ if $G\leq((4\circ2^{1+6}). \textup{Sp}_{6}(2))\circ 4$, and in case $\bO^+$ if $G\leq (2_{+}^{1+6}.\textup{O}_{6}^{+}(2))\circ 4$ by \cite[Table 4.6.B]{kleidman1990subgroup}. We do not consider the case $\bL$, so we only study the latter case. By Magma \cite{Magma}, there are $11$ subgroups of $N_{GL_{8}(5)}(\rho_2(E))$ whose order is divisible by $5^3+1$, where $\rho_2$ is defined similarly as in case (b). None of them gives rise to transitive $m$-ovoids of $Q^{+}(7,5)$.

\subsection{The nearly simple cases}\label{sec_nearlysimple}

In this case, $S\leq\overline{G}\leq\textup{Aut}(S)$, where $S$ is a finite non-abelian simple group and a covering group $\tilde{S}$ of $S$ acts on $V$ absolutely irreducibly, cf. Definition 2.1.3 and Theorem 2.2.19 of \cite{bray2013maximal}. Let $\rho$ be the representation of $\tilde{S}$ afforded by $V$.  The relevant information on the Frobenius-Schur indicator of $\rho$ is available from  \cite{Hissrepreofquasi2001} and \cite{LubeckSmalldegree}. In the cases where the Frobenius-Schur indicator is $\circ$, we use Lemma \ref{lem_FScircDet} to determine whether it is of case $\bL$ or $\bU$, and we do not consider the cases that are of type $\bL$. In the cases where the Frobenius-Schur indicator is $+$, we do not need to determine the actual type of the quadratic space. The expected type $\bO^\epsilon$ can be read off from the relation between $d$ and $e$ in Table \ref{tab_valuede}, and in most cases it works to exclude the existence of transitive $m$-ovoids by showing that the ovoid number does not divide $|\Aut(S)|$. For the cases that survive the divisibility test, the explicit matrix form of $\rho$ is  available  either in Magma \cite{Magma} or in Atlas \cite{Atlas}, and the Magma command \textit{ClassicalForms} returns the associated classical form $\kappa$. At this point, we do an exhaustive search and are able to determine all the transitive $m$-ovoids in most cases.

By \cite[Theorem 3.1]{bamberg2008overgroups}, there are four families to consider, which we handle separately below. There are cases where $S$ is transitive on $\cP$, cf. \cite{giudici2020subgroups}, and we exclude them from consideration.

(I) \textit{Alternating group cases}.  We first consider the fully deleted module examples. By Theorem 3.1 of \cite{bamberg2008overgroups}, we have $q=p$, and $p^e=2^4,\,2^{10},\,2^{12},\,2^{18},\,3^{4},\,3^{6}$ or $5^6$,
$A_n\le G\le S_n\times Z$. Here, $Z$ is the center of $\GL(V)$, and $e$ is as defined in Table \ref{tab_valuede}. By the same table, we see that $d$ should be one of $e,e+1,e+2$. The type of the associated polar spaces are available in Section \ref{subsec_FDM}. We check with Magma that the cases with proper transitive $m$-ovoids are exactly those listed in Table \ref{tab_fully}. We then consider the other absolutely irreducible representations of $A_n$ or its covering groups. We check with Magma that the cases with proper transitive $m$-ovoids are exactly those listed in Table \ref{tab_otherAlt}.  The transitive $m$-ovoids in the Alternating group case are summarized in Example \ref{exa_Alternating}.

(II) \textit{Sporadic simple group cases}. After the same analysis as above and computer search by Magma, it turns out that the only case with transitive $m$-ovoids is the one in Example \ref{exa_thmsporadic}.

(III) \textit{Cross-characteristic cases}. In this case, $S$ is simple group of Lie type whose defining characteristic is distinct from $p$.  The cases with transitive $m$-ovoids are exactly those listed in Table \ref{tab_thmcross}.

(IV) \textit{Natural-characteristic cases}. In this case, $S$ is a simple group of Lie type whose defining characteristic is $p$. The Frobenius-Schur indicators are available from Propositions 5.7.1-5.7.4 and Table 5.6 of \cite{bray2013maximal}, and they uniquely determine the type of $\cP$. We exclude the cases  $(S,\cP)=(\PSL_3(q^2),H(8,q^2))$ with $q\not\equiv 2\pmod 3$ and $(S,\cP)=(\SL_{3}(q^2),H(8,q))$ with $q\equiv 2(\textup{mod }3)$ by the fact that the ovoid number of $\cP$ does not divide $\Aut(S)$, see also  \cite[Theorem 4.1]{bamberg2008overgroups}. There are five cases where $S$ is transitive on $\cP$, cf. \cite{giudici2020subgroups}. For the remaining five cases,  there are transitive $m$-ovoids in each case. Please refer to Example \ref{exa_thmNatural} for details.

\begin{remark} Here are comments on the relevant results in \cite{bamberg2008overgroups}.
\begin{enumerate}
\item[(1)] In the Alternating group case, the two cases $(G^{(\infty)},\cP)=(2^{\cdot}A_7,H(3,3^2))$ and $(2^{\cdot}A_7,H(3,5^2))$ are missing from \cite[Theorem 4.2]{bamberg2008overgroups}.
\item[(2)] In the Sporadic group case, the case where $G'=2^\cdot M_{22}$, $d=e=10$, $q=2$ does not occur but is erroneously listed in \cite[Theorem 3.1]{bamberg2008overgroups}, cf. \cite[Table 4.4]{bray2013maximal}. Also, the cases $(S,\cP)=(J_3,\,H(8,4))$, $(M_{22},\,H(5,4))$ are missing from Theorems 4.1 and 4.2 of \cite{bamberg2008overgroups} respectively.
\item[(3)] In the Cross-characteristic case, the following two lines in \cite[Theorem 3.1]{bamberg2008overgroups} should be removed due to the fact that the unique $d$-dimensional absolutely irreducible representation of $S$ over $\F_4$ can not be realized over $\F_2$: (1) $(q,d,e)=(2,18,18)$ and $S=\PSL(2,37)$; (2) $(q,d,e)=(2,12,12)$ and $S=\PSp(4,5)$. Also, the following cases are missing from \cite[Theorem 4.1]{bamberg2008overgroups}:
	\begin{align*}
		(S,\cP)=&(\PSL_{2}(7), H(2,9)), \,(\PSL_{2}(7), H(2,25)),\, (\PSL_{2}(11), H(4,4)), \,(\PSL_{2}(19), H(8,4)).
	\end{align*}
\item[(4)] In the Natural-characteristic case, the following two cases in \cite[Theorem 3.1]{bamberg2008overgroups} should be removed due to the fact that $\Phi^*_{ef}(p)$ does not divide $|\Aut(S)|$ in either case: (a) $(q,d,e)=(3^f,7,6)$ with $f$ even and $S={}^{2}G_{2}(q^{1/2})$, (b) $(q,d,e)=(2^f,4,4)$ with $f$ even and $S=\textup{Sz}(q^{1/2})$.
\end{enumerate}
\end{remark}

\section{Proof of the main results, II}\label{sec_extField}
This section is devoted to the proof of the main results in the extension field case. We keep the same notation as introduced in the beginning of Section \ref{sec:mainresults}. In particular, $V$ is a $d$-dimensional vector space over $\F_q$ equipped with a nondegenerate form $\kappa$, and $\cP$ is the polar space of rank $r\ge 2$ associated with $(V,\kappa)$. Let $\cM$ be a proper $m$-ovoid of $\cP$ with an automorphism group $H_0\le\textup{P}\Gamma(V,\kappa)$ such that $H_0$ is transitive on $\cM$ and its full preimage $H$ in $\Gamma(V,\kappa)$ is irreducible on $V$, and $G=H\cap \GL(V)$.\medskip

In this section, we suppose that there is a divisor $b>1$ of $d$ such that we can regard $V$ as a vector space $V'$ of dimension $d/b$ over $\F_{q^b}$ and $H$ preserves a nondegenerate reflexive sesquilinear form or quadratic form $\kappa'$ on $V'$. \textbf{We take $b$ maximal with respect to this property}. Let $\cP'$ be the polar space associated  to $(V',\kappa')$. We refer to \cite[Table 4.3.A]{kleidman1990subgroup} for the connection between $\kappa$ and $\kappa'$, see also Table \ref{tab_extfieldP'gt0}. We have $H\le \Gamma^{\#}(V',\kappa')$, where $\Gamma^{\#}(V',\kappa')$ is as defined in \eqref{eqn_GamV'def}.

By \cite[Theorem 3.1]{bamberg2008overgroups}, there are two subcases according as whether $\Phi_{ef}^*(p)$ is relatively prime to $b$ or not. If not, then $q=p$, $\Phi_{e}^*(p)=b=d=e+1$, $H=G\leq \textup{GL}_{1}(q^d)\cdot d$, and $p^e=2^4,2^{10},2^{12},2^{18},3^4,3^6,5^6$. In all these cases, we have $d-e=1$ and $q=p$, so by the relation between $d$ and $e$ in Table \ref{tab_valuede} the possible cases are all in case $\bO$ with $p^e=3^4,3^6$ or $5^6$. Since $\theta_{r}=p^{e/2}+1$ divides $|\textup{GL}_{1}(q^d)\cdot d|=2(p-1)d$, we exclude the case $p^e=5^6$. In the remaining two cases, we check that there is no transitive $m$-ovoid of $Q(d-1,p)$ by Magma \cite{Magma}.

\textit{From now on, we consider the case where $\Phi_{ef}^*(p)$ is relatively prime to $b$.} The possible pairs of $(\cP,\cP')$ are listed in Table \ref{tab_extfieldP'gt0}, which are obtained by iteratively applying field reductions in \cite[Table 4.3A]{kleidman1990subgroup}. There are some cases that can be excluded by divisibility conditions.
\begin{lemma}\label{lem_extfld2468}
	There is no transitive $m$-ovoid in the cases 2,4,6,8,12 of Table \ref{tab_extfieldP'gt0} with a transitive automorphism group contained in $\textup{P}\Gamma^\#(V',\kappa')$.
\end{lemma}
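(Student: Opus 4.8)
The plan is to derive a contradiction from a primitive-prime-divisor count carried out over the extension field $\F_{q^b}$. Suppose $\cM$ is a proper transitive $m$-ovoid in one of the cases $2,4,6,8,12$ of Table~\ref{tab_extfieldP'gt0}, with $H_0\le\textup{P}\Gamma^\#(V',\kappa')$. Recall from the beginning of Section~\ref{sec:mainresults} that $\theta_r=p^{ef/2}+1$, and that the standing hypotheses $e>2$ and $(p,ef)\ne(2,6)$ guarantee, via Lemma~\ref{lem_zsigmondy}, that $p^{ef}-1$ has a primitive prime divisor $t$; note $t\equiv1\pmod{ef}$, so $t\ge5$. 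A primitive prime divisor of $p^{ef}-1$ cannot divide $p^{ef/2}-1$, hence divides $p^{ef/2}+1=\theta_r$; since $H_0$ is transitive on $\cM$ with $|\cM|=m\theta_r$, we get $\theta_r\mid|H_0|$ and so $t\mid|H|$. As the $\F_q$-scalars multiply $\kappa'$ by a square of $\F_q^*$, they lie in $\Gamma^\#(V',\kappa')$, whence $H\le\Gamma^\#(V',\kappa')$ and therefore $t\mid|\Gamma^\#(V',\kappa')|$. The whole proof then consists in showing that this cannot happen.

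First I would peel off the ``small'' prime divisors of $|\Gamma^\#(V',\kappa')|$. Since $\Gamma^\#(V',\kappa')$ normalises $\Omega(V',\kappa')$ with $|\Gamma^\#(V',\kappa'):\Omega(V',\kappa')|$ dividing $bf\,(q^b-1)$ times a small constant (the Galois, diagonal and graph contributions, cf.\ \cite{kleidman1990subgroup}), every prime divisor of $|\Gamma^\#(V',\kappa')|$ is $p$, divides $bf(q^b-1)$, divides $|\Omega(V',\kappa')|$, or lies in $\{2,3\}$. Now $t\ne p$ and $t\notin\{2,3\}$ because $t\ge5$, and one checks $e>b$ in every one of the five cases: here $e\in\{d-2,d-1,d\}$, while non-triviality of $\cP'$ forces $b\le d/2$ in cases $2,6,8$ and $b\le d/3$ in cases $4,12$, and moreover $d\ge6$ in cases $2,6,12$. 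Hence $ef>bf$, so $t>ef>bf$ gives $t\nmid bf$, and $t\mid q^b-1=p^{bf}-1$ would force $ef\mid bf$, i.e.\ $e\mid b$, a contradiction. So the whole matter reduces to proving $t\nmid|\Omega(V',\kappa')|$.

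This last step is the heart of the argument, and would be handled by writing down the standard order formula of $\Omega(V',\kappa')$: it is $\Omega^+_{d/b}(q^b)$ in case $2$, $\Omega_{d/b}(q^b)$ with $d/b$ odd in cases $4,12$, and $\textup{SU}_{d/b}(q^{b/2})$ with $d/b$ even in cases $6,8$. In each case every cyclotomic factor of $|\Omega(V',\kappa')|$ divides $q^j-1=p^{fj}-1$ for an exponent $j$ that is either $d/2$ (the ``top'' factor $q^{d/2}-1$, present in cases $2,6,8$) or of the form $2bi$ with $i\le d/(2b)-1$ (case $2$), $2bi$ with $i\le(d/b-1)/2$ (cases $4,12$), or $bi$ with $i<d/b$ (the factors $(q^{b/2})^i-(-1)^i$ in cases $6,8$, each of which divides $q^{bi}-1$). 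Since $t$ is a primitive prime divisor of $p^{ef}-1$, we have $t\mid p^{fj}-1$ if and only if $e\mid j$; comparing $e\in\{d-2,d-1,d\}$ with the exponents above and using $b\ge2$, $d\ge6$, one finds $0<j<e$ for every non-top exponent (so $e\nmid j$) while $e\nmid d/2$ holds directly, whence $t\nmid|\Omega(V',\kappa')|$ — the desired contradiction. I expect the only genuine care to be needed for the top unitary factor in case $8$: one must use that $(q^{b/2})^{d/b}-(-1)^{d/b}$ equals $q^{d/2}-1$ (as $d/b$ is even), rather than the crude bound that it divides $q^{d}-1$, since the latter would spuriously permit $e=d\mid d$; beyond that, the final paragraph is a routine check run over the five rows of Table~\ref{tab_extfieldP'gt0}.
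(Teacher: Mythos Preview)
Your proposal is correct and follows essentially the same primitive-prime-divisor strategy as the paper: both arguments show that a primitive prime divisor $t$ of $p^{ef}-1$ (which necessarily divides $\theta_r$ and hence $|H|$) cannot divide the order of $\Gamma^\#(V',\kappa')$, by inspecting the cyclotomic factors in the relevant classical group order. The paper's version is more compressed: it disposes of case~12 in one line by invoking the constraint $e\le d-2$ from \cite[Theorem~3.1]{bamberg2008overgroups} (whereas you handle case~12 uniformly with the others via the order formula for $\Omega_{d/b}(q^b)$), and it writes out only case~4 in full, asserting the remaining cases are similar. Your uniform treatment across all five cases, together with the explicit flag on the top unitary factor in case~8, is a legitimate and slightly more self-contained variant of the same argument.
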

\begin{proof}
	The case 12 is not possible, since we need $e\le d-2$ by \cite[Theorem 3.1]{bamberg2008overgroups} but we have $e=d$ by Table \ref{tab_valuede}.  The proofs for the other four cases are similar, so we only prove case 4. In this case, $e=d-1$ by Table \ref{tab_valuede}, $H\leq\GamO_{d/b}(q^b)$, and the order of $\GamO_{d/b}(q^b)$ is $N:=q^{b(d/b-1)^2/4}(q^b-1)\Pi_{i=1}^{(d/b-1)/2}(q^{2bi}-1)bf$. The ovoid number of $\cP=Q(d-1,q)$ is $\theta_{(d-1)/2}=q^{(d-1)/2}+1$. We claim that $\theta_{(d-1)/2}$ does not divide $N$, and the result then follows. Recall that we assume $\gcd(b,\Phi_{ef}^*(p))=1$.
	Let $r_{0}$ be a prime divisor of $\Phi_{ef}^*(p)$. Then $r_{0}$ divides $q^{(d-1)/2}+1$ and  is relatively prime to $q^{k_{0}}-1$ if $1\leq k_{0}\leq d-1$. In addition, $r_{0}$ is relatively prime to $f$ by Section \ref{sec_primdiv}, and $\gcd(r_0,b)=1$. We deduce that $r_{0}$ does not divide $|\GamO_{d/b}(q^b)|$ if $b<d$. In the case $d=b$, $r_0$ can not divide $q^d-1$, since otherwise it would divide $\gcd(q^d-1,q^{e}-1)=q-1$: a contradiction to the fact $r_0$ is a primitive prime divisor. Hence $r_{0}$ does not divide $|\GamO_{d/b}(q^b)|$. This completes the proof.
\end{proof}

Throughout this section, we set
\begin{equation}\label{eqn_Kdef}
	K:=H\cap \Delta(V',\kappa').
\end{equation}
\begin{lemma}\label{lem_K_primdiv}
	Let $e$ be determined by $d$ as in Table \ref{tab_valuede}. Then $\Phi_{ef}^*(p)$ divides $|K|$.
\end{lemma}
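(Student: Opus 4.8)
The plan is to deduce the claim from three facts that are essentially already in place: (i) $\Phi_{ef}^*(p)$ divides $|H|$, as recorded at the beginning of Section \ref{sec:mainresults}; (ii) the index $[H:K]$ divides $bf$; and (iii) $\Phi_{ef}^*(p)$ is coprime to $bf$. Granting (i)--(iii), since $|H|=|K|\cdot[H:K]$ and $\gcd(\Phi_{ef}^*(p),[H:K])=1$ by (ii) and (iii), it follows immediately that $\Phi_{ef}^*(p)$ divides $|K|$.

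For (ii): by definition $\Delta(V',\kappa')$ is the kernel of the homomorphism $\sigma\colon\Gamma(V',\kappa')\to\Gal(\F_{q^b}/\F_p)$ appearing in \eqref{eqn_Gam}, so $\Gamma(V',\kappa')/\Delta(V',\kappa')$ embeds in $\Gal(\F_{q^b}/\F_p)$, which is cyclic of order $bf$ since $q^b=p^{bf}$. As $H\le\Gamma^{\#}(V',\kappa')\le\Gamma(V',\kappa')$ and $K=H\cap\Delta(V',\kappa')$ by \eqref{eqn_Kdef}, the second isomorphism theorem gives $H/K\cong H\Delta(V',\kappa')/\Delta(V',\kappa')$, a subgroup of a cyclic group of order $bf$; hence $[H:K]$ divides $bf$.

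For (iii): our running hypotheses $e>2$ and $(p,ef)\neq(2,6)$ force $ef\ge 3$, so by Lemma \ref{lem_zsigmondy} the number $p^{ef}-1$ has a primitive prime divisor; consequently $\Phi_{ef}^*(p)>1$ and, by the discussion in Section \ref{sec_primdiv}, $\Phi_{ef}^*(p)$ is coprime to $ef$, and in particular to $f$. Combining this with the standing assumption of this section, namely that $\Phi_{ef}^*(p)$ is coprime to $b$, yields $\gcd(\Phi_{ef}^*(p),bf)=1$, which is (iii). There is no serious obstacle here; the only point needing a line of care is the coprimality of $\Phi_{ef}^*(p)$ and $f$, which is just the elementary fact that a primitive prime divisor $t$ of $p^{ef}-1$ satisfies $t\equiv 1\pmod{ef}$ and so cannot divide $f$.
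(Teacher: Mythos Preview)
Your proof is correct and follows essentially the same approach as the paper's own argument: both show that $[H:K]$ divides $bf$, that $\Phi_{ef}^*(p)$ divides $|H|$, and that $\Phi_{ef}^*(p)$ is coprime to $bf$ (using the standing assumption of coprimality to $b$ and the primitive prime divisor fact for coprimality to $f$). You simply supply a bit more detail on the second isomorphism theorem step and on why $\Phi_{ef}^*(p)$ is coprime to $f$.
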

\begin{proof}
	It is clear that $[H:\,K]$ divides $[\Gamma(V',\kappa'):\,\Delta(V',\kappa')]=bf$.  Since the ovoid number $\theta_r$ divides $|H|$, we have that $\Phi_{ef}^*(p)$ divides $|H|$. Recall that we assumed that $\Phi_{ef}^*(p)$ is relatively prime to $b$, and it is relatively prime to $f$ by Section \ref{sec_primdiv}. We thus deduce that $\Phi_{ef}^*(p)$ divides $|K|$.
\end{proof}

\begin{lemma}\label{lem_Kirr}
	Let $e$ be determined by $d$ as in Table \ref{tab_valuede}, and assume that $d/b\ge 3$. Then $K$ acts on $V'$ irreducibly if either of the following holds: (i) $e=d$, (ii) $b=2$, $d/2$ is odd and $e=d-2$.
\end{lemma}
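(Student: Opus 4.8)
The plan is to argue by contradiction: suppose $W'$ is a proper nonzero $K$-invariant $\F_{q^b}$-subspace of $V'$. The first step is to pass to the $\F_q$-structure and observe that $\Phi_{ef}^*(p)$ divides $|K|$ by Lemma~\ref{lem_K_primdiv}, so $K$ contains an element $g$ of order divisible by some primitive prime divisor $r_0$ of $p^{ef}-1$. Recall from Section~\ref{sec_primdiv} that $r_0\equiv 1\pmod{ef}$ and $\gcd(r_0,ef)=1$; in particular $r_0$ is coprime to $f$, to $b$ (by our standing assumption in this case), and to $d$. The element $g$ acts on $V=V'$ over $\F_q$ as a $\Phi_{ef}^*(p)$-element, and I would use the usual Zsygmondy-style argument: a cyclic group of order $r_0$ acting faithfully and $\F_q$-linearly on a space on which $p$ has multiplicative order $ef$ modulo $r_0$ forces any irreducible $\F_q[g]$-constituent with nontrivial $g$-action to have dimension a multiple of $ef$. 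Hence $W'$, viewed over $\F_q$, has dimension divisible by $ef$ (once we check $g$ acts nontrivially on it, which follows because $g$ acts nontrivially on $V$ and is forced to act nontrivially on every $\F_{q^b}$-subspace by irreducibility of $H$ — or more directly, its fixed space is a proper $K$-invariant subspace and must be zero by the same dimension count applied to the complement).

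Now the arithmetic closes the two cases. In case (i), $e=d$, so $\dim_{\F_q}W'$ is a positive multiple of $df$ and hence equals $df=\dim_{\F_q}V$, i.e.\ $W'=V'$, contradicting properness. In case (ii), $b=2$ and $e=d-2$ with $d/2$ odd, so $\dim_{\F_q}W'$ is a positive multiple of $(d-2)f$; since $\dim_{\F_q}W'\le df$ and $W'$ is a genuine $\F_{q^2}$-subspace its $\F_q$-dimension is even and divisible by... here I would pin down that $\dim_{\F_{q^2}}W'\in\{1,\dots,d/2-1\}$, so $\dim_{\F_q}W'\in\{2,4,\dots,d-2\}$, and the only multiple of $(d-2)f$ in that range — using $d/b=d/2\ge 3$ so $d\ge 6$ and $(d-2)f\ge 4$ — is $d-2$ itself (taking $f=1$; for $f\ge 2$ there is no multiple at all strictly below $df$ except when it would exceed the range). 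This forces $\dim_{\F_{q^2}}W'=(d-2)/2=d/2-1$, i.e.\ $W'$ is a hyperplane of $V'$. To eliminate the hyperplane possibility I would invoke that $K\le\Delta(V',\kappa')$ preserves the nondegenerate form $\kappa'$: if $K$ stabilizes a hyperplane $W'$ it also stabilizes the nondegenerate or degenerate radical structure, and in the Hermitian case $W'^{\perp}$ is a $K$-invariant $1$-space, whence $K$ (and then $H$) fixes a $1$-dimensional $\F_{q^2}$-subspace, contradicting either the irreducibility of $H$ over $\F_q$ or the maximality of $b$ (a fixed $1$-space over $\F_{q^2}$ would let us replace $\F_{q^2}$ by a larger field, or simply contradicts $H$ irreducible since that $1$-space is $2$-dimensional over $\F_q$ and $d>2$).

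I expect the main obstacle to be the bookkeeping in case (ii): making the dimension count genuinely exclude every intermediate $\F_{q^2}$-dimension and then ruling out the surviving hyperplane case cleanly. The hyperplane case is the delicate one because a primitive-prime-divisor element can act irreducibly on a hyperplane of the right dimension, so the contradiction must come from the form $\kappa'$, not from $g$ alone; I would lean on $K\le\Delta(V',\kappa')$ and the fact that $\kappa'$ is $K$-invariant and nondegenerate to produce a $K$-invariant complement or perp and then contradict the irreducibility of $H$ on $V$ over $\F_q$. The case-(i) half is routine Zsygmondy. A secondary point to be careful about is confirming $g$ acts nontrivially (equivalently, has trivial fixed space) on $V'$, which I would get from $H$ being irreducible over $\F_q$ together with $r_0\nmid q^i-1$ for $i<ef$.
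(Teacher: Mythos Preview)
Your overall strategy matches the paper's: use that $\Phi_{ef}^*(p)$ divides $|K|$ to constrain the possible dimensions of a proper $K$-invariant $\F_{q^b}$-subspace, and in case (ii) pass from a hyperplane to a $1$-space via the perp with respect to $\kappa'$. The paper phrases the dimension constraint as an order bound on the parabolic $q^{bk(d/b-k)}\cdot(\GL_k(q^b)\times\GL_{d/b-k}(q^b))$ rather than via the module structure of a single element $g$, but these are equivalent.

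There are two genuine problems in your write-up. First, the dimension bookkeeping is off by a factor of $f$ throughout: an irreducible $\F_q[g]$-constituent on which $g$ (of prime order $r_0$ dividing $\Phi_{ef}^*(p)$) acts nontrivially has $\F_q$-dimension equal to the multiplicative order of $q=p^f$ modulo $r_0$, which is $e$, not $ef$; and $\dim_{\F_q}V=d$, not $df$. With this correction case (i) goes through. In case (ii) you do land on $\dim_{\F_{q^2}}W'\in\{1,d/2-1\}$, but your claim that $g$ acts nontrivially on every nonzero $K$-invariant subspace is false: the fixed space of $g$ on $V'$ is exactly $1$-dimensional over $\F_{q^2}$ (since $d/2=(d-2)/2+1$), and the dimension-$1$ possibility arises precisely from $W'\subseteq\mathrm{Fix}(g)$.

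Second, and this is the real gap, you assert that a $K$-invariant $1$-space $U_1$ is ``(and then $H$)'' invariant and then contradict irreducibility of $H$. That passage from $K$ to $H$ needs an argument; $K\ne H$ in general, and neither the form $\kappa'$ nor the maximality of $b$ supplies it. The paper uses $K\unlhd H$: for $h\in H$ the image $U_1^h$ is again a $K$-invariant $1$-space, so $U_1+U_1^h$ is $K$-invariant of dimension at most $2$; since the only allowed dimensions are $1$ and $d/2-1$ and $d/2\ge 3$, one gets $U_1^h=U_1$ (for $d/2=3$, use instead that $U_1=\mathrm{Fix}(g)$ is the unique $g$-invariant $1$-space). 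This yields $H$-invariance of $U_1$, contradicting irreducibility of $H$ on $V$ over $\F_q$. Your sketch correctly uses $\kappa'$ to drop from the hyperplane to a $1$-space, but the normality step is what you are missing.
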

\begin{proof}
	First consider the case $e=d$. It follows from Lemma \ref{lem_K_primdiv} that $\Phi_{df}^{*}(p)$ divides $|K|$. If $K$ stabilizes a subspace $U'$ of dimension $k$ with $1<k<d/b$, then
	\[
	K\le q^{k(d/b-k)}\cdot(\GL_{k}(q^b)\times\GL_{d/b-k}(q^b)).
	\]
	The latter group has an order not divisible by $\Phi_{df}^{*}(p)$, which is a contradiction. We conclude that $K$ is irreducible on $V'$ in this case.
	
	We next consider the case where $b=2$, $d/2$ is odd and $e=d-2$. In this case, $d\ge 3b=6$, and $\Phi_{ef}^{*}(p)$ divides $|K|$ by Lemma \ref{lem_K_primdiv}. Suppose that $K$ is reducible on $V'$. By the same argument as in the previous case, we deduce that if a proper $K$-invariant subspace of $V'$ has dimension $k$ then $\max\{2k,d-2k\}\ge d-2$. That is, a proper $K$-invariant subspace of $V'$ must have dimension $1$ or $d/2-1$. If $U'$ is a $K$-invariant subspace of dimension $d/2-1$, then $U'^\perp$ is a $K$-invariant subspace of dimension $1$. Therefore, there is always a $K$-invariant subspace $U_1=\F_{q^2}\cdot u$ of dimension $1$ over $\F_{q^2}$. Fix $g\in H$. Since $K\unlhd H$, $U_1^g$ is also $K$-invariant. It follows that $U_1+U_1^g$ is $K$-invariant, and thus has dimension $1$ or $d/2-1$. Since $d/2\ge 3$, we see that it has dimension $1$, i.e., $U_1=U_1^g$. We thus have shown that $U_1$ is $H$-invariant, contradicting  the hypothesis that $H$ is irreducible on $V$. This completes the proof.
\end{proof}

By Table \ref{tab_extfieldP'gt0}, if $\cP=H(d-1,q)$ with $d$ even or $\cP=Q(d-1,q)$, then $\cP'$ must be of the same type and this has been excluded by Lemma \ref{lem_extfld2468}. We consider the remaining cases one by one in the sequel. Recall that $m$-ovoids exist only in finite classical polar spaces of rank at least $2$.\\

\subsection{The case $\cP=H(d-1,q)$, $d\ge 5$ odd}\label{subsec_II_Hdood}

In this case, we must have $\cP'=H(d/b-1,q^b)$ by Table \ref{tab_extfieldP'gt0}. We first consider the case where $b=d$. In this case, $\F_{q^{d/2}}\cap\F_q=\F_{q^{1/2}}$, and  we write $\Gamma^\#=\Gamma^\#(V',\kappa')$ for short. Suppose that $\cM=\la x\ra^H$, and $|\cM|=m(q^{d/2}+1)$. Since $|\cM|$ divides $|\textup{P}\Gamma^\#|=\frac{df(q^{d/2}+1)}{q^{1/2}+1}$, we deduce that $m(q^{1/2}+1)$ divides $df$. The element $g:\,u\mapsto x^{1-q^{d/2}}u^{q^{d/2}}$ has order $2$ and lies in $\Gamma^\#(V',H')$. We can verify that $g(x)=x$. Therefore, $|\cM|\le |\la v\ra^{\Gamma^\#}|\le\frac{df(q^{d/2}+1)}{2(q^{1/2}+1)}$.
Together with the bound on $m$ in Theorem \ref{thm_Bboundm}, we have
\[
(q^{d/2}+1)\cdot\frac{-3+\sqrt{9+4q^{d/2}}}{2(q-1)}\le \frac{df(q^{d/2}+1)(q^{1/2}-1)}{2(q-1)},
\]
which yields $-3+2p^{df/4}\le df(q^{1/2}-1)\le df(p^{df/6}-1)$. Set $y:=df/2$ which is an integer such that $y\ge 3$. We deduce that
\[
2p^{y/2}\le 2y-3+2p^{y/2}\le 2yp^{y/3},\quad\textup{i.e., } p^{y/6}\le y.
\]
Together with the condition that  $p^{f/2}+1$ divides $df$,  we check with computer that there is no feasible $(p,b,f)$ tuple. This concludes the discussion for the case $d=b$.\\

For the rest of this subsection we assume that $b<d$. By Table \ref{tab_valuede}, we have $e=d$. By Lemma \ref{lem_K_primdiv} and Lemma \ref{lem_Kirr},  $\Phi_{ef}^*(p)$ divides $|K|$ and $K$ is irreducible on $V'$. We observe that $q^b=p^{bf}$ with $f$ even and $bf>2$, and $K$ is not an extension field subgroup of $\textup{GL}(V')$ by the choice of $b$. We apply \cite[Theorem 4.1]{bamberg2008overgroups} to $K\le\textup{GL}(V')$ and see that $K^{(\infty)}=\textup{SU}_{d/b}(q^{b/2})$. In this case, $H^{(\infty)}=\textup{SU}_{d/b}(q^{b/2})$. We take the same notation as in Section \ref{subsec_Hdodd}. By \cite[Lemma 2.10.5]{kleidman1990subgroup}, $\textup{SU}_{d/b}(q^{b/2})$ acts transitively on the nonzero vectors of $V'$ that has the same $H'$-values. In particular, it acts transitively on $\cM_1:=\{\la v\ra_{\F_q}:\,H'(v)=0\}$, and $\cM_1$ is a  $\frac{q^{(d-b)/2}-1}{q-1}$-ovoid of $H(d-1,q)$ by \cite{KellyCons}.
We now consider the case where $\cM$ is a subset of $\cP\setminus\cM_1$. Since $\cM$ is the union of $\SU_{d/b}(q^{b/2})$-orbits, it is of the form as in Section \ref{subsec_Hdodd}, i.e., $\cM=\{\la v\ra_{\F_q}:\,H'(x)\in \mathcal{T}\}$ with $\mathcal{T}=\F_{q^{1/2}}^*\cdot T$, where $\tr_{q^{b/2}/q^{1/2}}(t)=0$ for each $t\in\mathcal{T}$. Since $H_0$ is transitive on $\cM$, its induced action on the set $\{\F_{q^{1/2}}^*\cdot H'(v):\,\la v\ra\in \cM\}=\{\F_{q^{1/2}}^*\cdot t:\,t\in T\}$ is transitive. We deduce that $|T|$ divides $bf/2$. By Lemma \ref{lem_Herm_Msize}, $\cM$ is an $m$-ovoid with  $m=\frac{|T|}{q^{1/2}+1}q^{(d-b)/2}$. It follows that $q^{1/2}+1$ divides $|T|$. By Proposition \ref{prop_Herm_Tbound}, we have $|T|\ge\frac{q^{b/2}}{(q^{1/2}-1)(2q^{b/4}+1)}$.
Together with the facts that $q^{1/2}+1$ divides $|T|$ and $|T|$ divides $bf/2$, we deduce that there are only $3$ feasible $(p,b,f,|T|)$ tuples.  We check the necessary and sufficient condition \eqref{eqn_HtoH_psiDcond} by Magma \cite{Magma} for each possible subset $T$. There are exactly two feasible tuples and the corresponding transitive $m$-ovoids are those in Example \ref{exa_FR_HtoH_smallpara}.

\subsection{The case $\cP=Q^-(d-1,q)$, $d\ge 6$}\label{subsec_II_QM}

In this case, we have $e=d$ by Table \ref{tab_valuede}, and $\Phi_{df}^*(p)$ divides $|K|$ by Lemma \ref{lem_K_primdiv}. As in the case 12 of Lemma \ref{lem_extfld2468}, we can show that $H$ does not preserve an $\F_{q^{b'}}$-linear quadratic form of type $\circ$ with $b'>1$ by divisibility considerations. By Table \ref{tab_extfieldP'gt0}, we see that there are two possibilities:
\begin{enumerate}
	\item[(a)] $d/b$ even,  and $\cP'=Q^-(d/b-1,q^b)$, $\kappa'=Q'$, $\kappa=Q=\tr\circ Q'$,
	\item[(b)] $b$ even, $d/b$ odd, $\cP'=H(d/b-1,q^{b})$, $H'(x)=\kappa'(x,x)$, $\kappa=Q=\tr_{q^{d/2}/q}\circ H'$.
\end{enumerate}

Suppose that we are in the case (a). If $d/b=2$, then $H$ preserves a $1$-dimensional Hermitian form over $\F_{q^d}$ by \cite[p.120]{kleidman1990subgroup}, contradicting the maximality of $b$. Hence we have $d/b\ge 4$. By Lemma \ref{lem_Kirr}, $K$ is irreducible on $V'$, where $K$ is defined in \eqref{eqn_Kdef}. We observe that $q^b$ is a square, and $K$ is not an extension field subgroup of $\textup{GL}(V')$ by the choice of $b$.  We apply \cite[Theorem 3.1]{bamberg2008overgroups} to $K\le\textup{GL}(V')$ and see that $K^{(\infty)}=\Omega_{d/b}^-(q^b)$. It follows that $H^{(\infty)}=\Omega_{d/b}^-(q^b)$.
The group $\Omega_{d/b}^-(q^b)$ is transitive on the nonzero vectors of the same $Q'$-value by \cite[Lemma 2.10.5]{kleidman1990subgroup}; in particular, it is transitive on
$\cM_1:=\{\left\langle \lambda v\right\rangle_{\F_q}:\,Q'(v)=0\}$,
and $\cM_1$ is a $\frac{q^{d/2-b}-1}{q-1}$-ovoid of $\cP$ by \cite{KellyCons}.
We next consider the case $\cM\subseteq\cP\setminus\cM_1$, which must be of the form in Section \ref{subsec_QMdbeven}. If $b=2$, then $\cM=\cP\setminus\cM_1$ by Remark \ref{rem_I_QMtoQM_b2} and it is indeed a transitive $q^{d/2-2}$-ovoid. We assume that $b>2$ in the sequel, and take the same notation as in Section \ref{subsec_QMdbeven}. In particular, $\cM=\{\la v\ra_{\F_q}:\,Q'(x)\in \mathcal{T}\}$ with $\mathcal{T}=\square\cdot T$, where the elements of $\mathcal{T}$  have relative trace $0$. Set $\epsilon=1/2$ or $1$ according as $q$ is odd or even. Since $H$ is transitive on $\cM$, its induced action on $\{\square\cdot Q'(v):\,\la v\ra\in\cM\}=\{\square\cdot t:\,t\in T\}$ is also transitive and so $|T|$ divides $|\F_q^*/\square|\cdot bf$. The factor $|\F_q^*/\square|=\epsilon^{-1}$ comes from the similarity group.  By Lemma \ref{lem_QMinus_Msize}, $\cM$ is an $m$-ovoid with  $m=\epsilon|T|q^{d/2-b}$. By Proposition \ref{prop_Ellip_Tbound}, we have  $|T|\epsilon\ge\frac{q^{b}}{(q-1)(2q^{b/2}+1)}$. Together with the fact that $|T|\epsilon$ is a divisor of $bf$, we get a short list of feasible $(p,b,f,|T|)$ tuples. We check the necessary and sufficient condition \eqref{eqn_QMtoQM_psiDcond} with Magma for each possible subset $T$. The resulting transitive $m$-ovoids are exactly the ones in Example \ref{exa_FR_QMtoQM_smallpara}.\\

Suppose that we are in the case (b). In this case, $b$ is even and $d/b$ is odd. First consider the case $b=d$. Suppose that $\cM=\la v\ra^H$ for some $\la v\ra\in\cP$ and $|\cM|=m(q^{d/2}+1)$. Write $\Gamma^\#:=\Gamma\textup{U}^\#_1(q^d)$, let $\Delta^\#$ be its subgroup of similarities, and use $\textup{P}\Gamma^\#$ and $\textup{P}\Delta^\#$ for their projective analogues. The group $\textup{P}\Delta^\#$ is cyclic of order $q^{d/2}+1$ and acts semiregularly on $\cP$, and $\textup{P}\Gamma^\#$ has order $df(q^{d/2}+1)$. We deduce that $m$ divides $df$ by the fact $|\cM|$ divides $|\textup{P}\Gamma^\#|$. The group $\textup{P}\Gamma^\#$ has an involution that stabilizes $\cM$ as in the case $d=b$ of Section \ref{subsec_II_Hdood}, so $|\la v\ra^{\Gamma^\#}|\le \frac{df}{2}(q^{d/2}+1)$. We deduce that $m\le df/2$ by the fact that $|\la v\ra^H|\le |\la v\ra^{\Gamma^\#}|$. Together with the bound on $m$ in Theorem \ref{thm_Bboundm}, we deduce that there are $21$ feasible $(p,b,f)$ tuples and correspondingly $35$ feasible $(p,b,f,m)$ tuples. After applying Theorem \ref{thm_modulareqn}, we have seven remaining tuples: $(2,6,1,3)$, $(2,6,3,6)$, $(2,6,3,9)$, $(2,8,1,4)$, $(3,6,1,2)$, $(3,8,1,4)$, $(5,6,1,3)$.  The tuples $(2,6,1,3)$ and $(2,6,3,9)$ lead to $\cM=\cP$ which is not proper by considering sizes. Let $K$ be the subgroup of $H$ consisting of similarities, cf. \eqref{eqn_Kdef}, and set  $u:=[H:\,K]$ which is a divisor of $df$. We have $|H|=\frac{u}{r}(q^{d/2}+1)$, where $r=:[\textup{P}\Delta^\#:\,K]$. Since $|\cM|$ divides $|H|$, we deduce that $mr$ divides $u$.  For the remaining tuples, we check with Magma that there is no feasible group $H$ that leads to transitive $m$-ovoids. \medskip

We assume that $d/b\ge 3$ in the sequel. By Lemma \ref{lem_Kirr} and Lemma \ref{lem_K_primdiv}, $K$ is irreducible on $V'$ and has order divisible by $\Phi_{df}^*(p)$. By the choice of $b$, $K$ is not an extension field subgroup of $\textup{GL}(V')$. We apply \cite[Theorems 4.1]{bamberg2008overgroups} to $K\le\textup{GL}_{d/b}(q^b)$ and see that either $K^{(\infty)}=\textup{SU}_{d/b}(q^{b/2})$ or it is one of the  cases in Table \ref{tab_Thm4.1Spor}. The group $3^\cdot A_7$ has two orbits on $Q^-(5,5)$, which are both $3$-ovoids. We check with Magma that there are no transitive $m$-ovoids in other cases in the table. This leads to case (c1) of Theorem \ref{thm_PGamOMinusTransi}.

\begin{table}[!h]	
	\begin{center}
		\caption{The sporadic cases in \cite[Theorems 4.1]{bamberg2008overgroups}}
		\label{tab_Thm4.1Spor}
		\begin{tabular}{ccc|ccc}
         \toprule
			$K^{(\infty)}$& $d/b$ & $q^b$ & $K^{(\infty)}$ & $d/b$ & $q^b$  \\ \hline
			$3^\cdot A_7$ & $3$ & $5^2$ &	$3^\cdot J_3$ & $9$ & $2^2$	\\
			$\PSL_2(7)$   & $3$ & $3^2$ &   $\PSL_2(11)$ & $5$ & $2^2$ \\
			$\PSL_2(7)$ & $3$ & $5^2$   &   $\PSL_2(19)$ & $9$ & $2^2$ \\ \bottomrule
		\end{tabular}
	\end{center}
\end{table}

We next consider the case  $K^{(\infty)}=\textup{SU}_{d/b}(q^{b/2})$. In this case, $H^{(\infty)}=\textup{SU}_{d/b}(q^{b/2})$, and it is transitive on the nonzero vectors that have the same $H'$-value. In particular, $\cM_1:=\{\la v\ra_{\F_q}:\,H'(v)=0\}$ is an $H^{(\infty)}$-orbit, and it is a transitive $\frac{q^{(d-b)/2}-1}{q-1}$ ovoid of $\cP=\cQ^-(d-1,q)$ by \cite{KellyCons}. We next consider the case $\cM\subseteq\cP\setminus\cM_1$. In this case, $\cM$ must be of the form in Section \ref{subsec_QMHdbodd}. We take the same notation as therein, and set $\epsilon=1/2$ or $1$ according as $q$ is odd or even. Since $H$ is transitive on $\cM$, its induced action on $\{\square\cdot H'(v):\,\la v\ra\in\cM\}=\{\square\cdot t: \,t\in T\}$ is transitive. We deduce that $|T|$ divides $|\F_q^*/\square|\cdot bf/2$, where $|\F_q^*/\square|=\epsilon^{-1}$ and is contributed by a similarity in the case $q$ is odd. By the bound in Proposition \ref{prop_EllipHerm_Tbound}, if $\cM$ is an $m$-ovoid with $m=\epsilon|T|q^{(d-b)/2}$,  then $|T|\ge\frac{q^{b/2}}{\epsilon(q-1)(2q^{b/4}+1)}$. It follows that
\begin{equation}\label{eqn_II_HtoQM_ine}
	q^{b/2}\le\frac{1}{2}bf(q-1)(2q^{b/4}+1).
\end{equation}
By Remark \ref{rem_I_HtoQM_b24}, we must have $b>2$ and $\cM=\cP\setminus\cM_1$ if $b=4$. It remains to consider the case $b\ge 6$. Write $y:=\frac{1}{2}bf$, which is an integer such that $y\ge 3$. We have $q=p^f\le p^{y/3}$. From \eqref{eqn_II_HtoQM_ine} we deduce that $p^{y/6}<2y$. In particular, $2^{y/6}<2y$ from which we deduce that $y=\frac{1}{2}bf\le 37$. Then we deduce from $p^{y/6}<2y$ that $p\le 31$. There are $47$ feasible $(p,b,f)$ tuples and correspondingly $70$ feasible $(p,b,f,|T|)$ tuples satisfy that $|T|\epsilon$ as a divisor of $\frac{1}{2}bf$ and $|T|\epsilon\ge\frac{q^{b/2}}{(q-1)(2q^{b/4}+1)}$.
Since $d/b-1$ is even, we have $q^{\frac{b}{2}(\frac{d}{b}-1)} \equiv 1 \pmod{q+1}$, i.e.,  $m=\epsilon|T|q^{(d-b)/2} \equiv \epsilon|T| \pmod{q+1}$. By applying Theorem \ref{thm_modulareqn}, we obtain $11$ remaining feasible tuples: \begin{align*}
	&(2, 6, 1, 3),\,(2, 6, 3, 3 ),\,(2, 6, 3, 9), \,(2, 8, 1, 4), \,(2, 10, 2, 10 ),( 2, 10, 3, 15),\,\\ &( 2, 12, 1, 6 ),
	\, ( 3, 8, 1, 8 ),
	( 5, 6, 1, 6 ), \,	( 5, 8, 1, 8 ),\,
	( 17, 6, 1, 6).
\end{align*}
We then use Magma to search for the feasible subset $T$'s that satisfy the condition \eqref{eqn_II_psiDcond_HtoQM} in each case. The only transitive $m$-ovoids of the prescribed form are those listed in  Example \ref{exa_FR_HtoQM_smallpara}.

\subsection{The case $\cP=Q^+(d-1,q)$, $d\ge 6$}\label{subsec_II_QP}

In this case, we have $e=d-2$ by Table \ref{tab_valuede}. Since we assume that $e>2$, we have that $d\ge 6$. As in the case 2 of Lemma \ref{lem_extfld2468}, we can show that $H$ does not preserve an $\F_{q^{b'}}$-quadratic form of type $+$ with $b'>1$ by divisibility considerations. By Table \ref{tab_extfieldP'gt0}, we see that there are two possibilities:
\begin{enumerate}
	\item[(a)] $b$ even, $qb/2$ odd, $\cP'=Q(d/b-1,q^b)$, $\kappa'=Q'$, $\kappa=Q=\tr_{q^b/q}\circ \lambda Q'$ for some $\lambda\in\F_{q^2}$;
	\item[(b)] $b\equiv 2\pmod{4}$, $d/2$ even, $H'(x)=\kappa'(x,x)$, $\kappa=Q=\tr_{q^b/q}\circ H'$, and $\cP'=H(d/b-1,q^b)$.
\end{enumerate}
In both cases, the ovoid number $\theta_{d/2}=q^{d/2-1}+1$ of $Q^{+}(d-1,q)$ divides the order of $\Gamma(V',\kappa')$ only if $b=2$ upon direct check. We thus assume that $b=2$ in the sequel.\\

Suppose that we are in the case (a) with $b=2$. By Lemma \ref{lem_K_primdiv} and \ref{lem_Kirr}, $K$ has order divisible by $\Phi_{ef}^*(p)$ and $K$ is irreducible on $V'$. By the choice of $b$, $K$ is not a subgroup of $\textup{GL}(V')$ of the extension field type. Also, $V'$ is a vector space over $\F_{q^2}$, and $d/b\ge 3$ by the fact $d\ge 6$. By \cite[Theorem 3.1]{bamberg2008overgroups}, there are three possible cases:
\begin{enumerate}
	\item[(a1)] $qd/2$ odd, $K^{(\infty)}=\Omega_{d/2}(q^2)$, and $\cP'=Q(d/2-1,q)$;
	\item[(a2)] $d=14$, $\cP'=Q(6,q^2)$, $K^{(\infty)}=\PSU_{3}(q)$ with $q=3^f$;
	\item[(a3)] $d=14$, $\cP'=Q(6,q^2)$, $K^{(\infty)}=G_{2}(q^2)$ with $q$ odd.
\end{enumerate}
In the first case (a1),  $\Omega_{d/2}(q^2)$ is transitive on $\cM_1=\{\la x\ra_{\F_q}:\,Q'(x)=0\}$. Take $\delta\in\F_{q^2}^*$ such that $\delta+\delta^q=0$. Then $Q(v)=0$ if and only if $\lambda Q'(v)\in\F_q\cdot\delta$. The group $\Omega_{d/2}(q^2)$ has two orbits on $\cP\setminus\cM_1$, one of which is $\{\la v\ra_{\F_q}\in\cP:\,\lambda Q'(v)\in\square\cdot\delta\}$. The map $g:\,v\mapsto \mu v$ with $\mu\in\F_{q^2}^*$ of order $2(q-1)$ lies in $\Gamma^\#(V',Q')$, and it swaps the two orbits.
The two orbits are thus of the same length, and none of them can be $m$-ovoids since otherwise $m=\frac{1}{2}q^{d/2-1}$ would not be an integer. On the other hand, $\GamO_{d/2}^\#(q^2)$ is transitive on both $\cM_1$ and $\cP\setminus \cM_1$, so they are $\frac{q^{d/2-1}-1}{q-1}$- and $q^{d/2-1}$-ovoids respectively.  \medskip

In the cases (a2) and (a3), $K^{(\infty)}=H^{(\infty)}$. Case (a2) leads to Example \ref{exa_FR_PSU3_QPlus7}, so we consider case (a3). By \cite[Theorem 7.1]{giudici2020subgroups}, we deduce that $G_2(q^2)$ is transitive on the vectors of $V$ that have the same nonzero $Q'$-value. It is transitive on $\cP'=\cQ(6,q^2)$ and has a maximal torus $T$ which acts transitively on the set $\{\lambda v:\,\lambda\in\F_{q^2}^*\}$ for some $\la v\ra_{\F_{q^2}}\in\cP'$ by \cite[p.123]{wilson2009finite}. We deduce that $G_2(q^2)$ is transitive on the nonzero vectors of $V$ that have $Q'$-value $0$. As in (a1), $\cM_1$ and $\cP\setminus \cM_1$ are the only proper $G_2(q^2)$-invariant transitive $m$-ovoids.\\

Suppose that we are in the case (b) with $b=2$. Since $d\geq6$ and $d/2$ is even, we have $d/2\ge 4$. As in the case (a), $K$ has order divisible by $\Phi_{(d/2-1)\cdot 2f}^*(p)$ and $K$ is irreducible on $V'$. By the choice of $b$, $K$ is not a subgroup of $\textup{GL}(V')$ of the extension field type. Also, $V'$ is a vector space over $\F_{q^2}$. By \cite[Theorem 4.2]{bamberg2008overgroups}, either $(p,(d/2-1)f)=(2,3)$,   or  it is one of the cases in Table \ref{tab_Thm42Spor}. We check by Magma \cite{Magma} that there is no transitive $m$-ovoid that is $K^{(\infty)}$-invariant in all cases in the table. The case $(p,(d/2-1)f)=(2,3)$, i.e., $(q,d)=(2,8)$, is handled in Example \ref{exa_Q+72}.
\begin{table}[!h]
	\begin{center}
		\caption{The sporadic irreducible cases in \cite[Theorem 4.2]{bamberg2008overgroups}}\label{tab_Thm42Spor}
		\begin{tabular}{ccc|ccc}\toprule
			$K^{(\infty)}$& $d/2$ & $q^2$ & $K^{(\infty)}$ & $d/2$ & $q^2$  \\ \hline
			$2^{\cdot}A_7$ & $4$ & $3^2$ & $2^\cdot\PSL_{2}(7)$ & $4$ & $3^2$\\
			$2^{\cdot}A_7$ & $4$ & $5^2$ & $2^\cdot\PSL_{2}(7)$ & $4$ & $5^2$\\
			$4^{\cdot}\PSL_{3}(4)$ & $4$ & $3^2$ & $3^\cdot M_{22}$ & $6$ & $2^2$ \\ \bottomrule
		\end{tabular}
	\end{center}
\end{table}


\subsection{The case $\cP=W(d-1,q)$}\label{subsec_II_W}

In this case, we have $e=d$ by Table \ref{tab_valuede}.  As in the case 8 of Lemma \ref{lem_extfld2468}, we can show that $K$ does not fix an $\F_{q^{b'}}$-Hermitian form, where $b'$ is an even divisor of $d$ such that $\frac{d}{b'}$ is even. By Table \ref{tab_extfieldP'gt0}, we see that there are three possibilities for $\cP'$:
\begin{enumerate}
	\item[(a)] $d/b$ even,  $\cP'=W(d/b-1,q^b)$,  $\kappa=\tr_{q^b/q}\circ \kappa'$;
	\item[(b)] $b$ even, $d/b$ odd, $\cP'=H(d/b-1,q^b)$, $\kappa=\tr_{q^d/q}\circ \lambda\kappa'$ with $\lambda+\lambda^{q^{b/2}}=0$.\\
\end{enumerate}

Suppose that we are in the case (a). By Lemma \ref{lem_K_primdiv}, $\Phi_{df}^*(p)$ divides $|K|$. If $d/b=2$, then $K\le\PSp_{2}(q^{d/2})=\PSL_2(q^{d/2})$. We examine the list of maximal subgroups of $\PSL_{2}(q^{d/2})$ in \cite{Kingsubgroup2005} and deduce that $\Phi_{df}^*(p)$ divides $|K|$ only if $K\leq \GamU_{1}(q^{d/2})$. This contradicts the choice of $b$, so we must have $d/b\ge 4$. By Lemma \ref{lem_Kirr}, $K$ is irreducible on $V'$. It is not a extension field subgroup by the choice of $b$. We apply \cite[Theorem 3.1]{bamberg2008overgroups} to $K\le\textup{GL}(V')$, and see that there are four cases:
\begin{enumerate}
	\item[(a1)] $q$ even, $K^{(\infty)}=\Omega^{\pm}_{d/b}(q^b)$.
	\item[(a2)] $K^{(\infty)}=\textup{Sp}_{d/b}(q^b)$. In this case, $\Sp_{d/b}(q^b)$ is transitive on the nonzero vectors of $V'$ by \cite[Proposition 3.2]{Grove2002class}, and we have no proper transitive $m$-ovoid.
	\item[(a3)] $d/b=4$, $q$ even, $K^{(\infty)}=\textup{Sz}(q^b)$. This leads to Example \ref{exa_FR_W_Sz}.
	\item[(a4)] $d/b=6$, $q$ even, $K^{(\infty)}=G_2(q^b)$. In this case, the group $G_2(q^b)$ is transitive on the points of $W(5,q^b)$ by \cite{cooperstein1981maximal}. By examining its maximal torus, we deduce that $G_2(q^b)$ is transitive on the nonzero vectors of $V'$ and thus we have no proper transitive $m$-ovoid.
\end{enumerate}
It remains to consider (a1). The ovoid number $q^{d/2}+1$ of $\cP$ does not divide $|\textup{P}\GamO^{+}_{d/b}(q^b)|$, so we must have  $K^{(\infty)}=H^{(\infty)}=\Omega^{-}_{d/b}(q^b)$.
By \cite[Lemma 2.10.5]{kleidman1990subgroup}, $\Omega^{-}_{d/b}(q^b)$ is transitive on the nonzero vectors of the same $Q'$-value.  The set $\cM_1=\{\la v\ra_{\F_q}:\,Q'(v)=0\}$ is a transitive $\frac{q^{d/2-b}-1}{q-1}$-ovoid of $W(d-1,q)$ by Examples \ref{exa_Classical} and \cite{KellyCons}. We consider the case where $\cM\subseteq\cP\setminus\cM_1$. We take the same model of $Q'$ as in Section \ref{subsec_QMdbeven},  and assume that $q$ is even. The polar form $\kappa'$ of $Q'$ is alternating, and $\kappa=\tr_{q^b/q}\circ \kappa'$. Since $\cM$ is the union of $\Omega^{-}_{d/b}(q^b)$-orbits, it must be of the form
\begin{equation}\label{eqn_II_Mdef}
	\cM=\{\la v\ra_{\F_q}:\,Q'(v)\in\mathcal{T}\}
\end{equation}
for an $\F_q^*$-invariant subset $\mathcal{T}$ of $\F_{q^b}^*$.  Let $T$ be a set of $\F_q^*$-coset representatives of $\mathcal{T}$, so that $\mathcal{T}=\F_q^*\cdot T$. We have $|\cM|=m(q^{d/2}+1)$ with $m=|T|q^{d/2-b}$ by Lemma \ref{lem_QMinus_Msize}. The induced action of $H$ on $\{\F_q^*\cdot t:\,t\in T\}$ is transitive, so $|T|$ divides $bf$. We set $D=\{x\in V:\,Q'(x)\in\mathcal{T}\}$ correspondingly. As in the proof of Proposition \ref{prop_Ellip_Tbound}, we deduce that
\[
\psi_a(D)=-{q^{d/2-b}}\sum_{t\in\mathcal{T}}K(\psi_{\F_{q^b}},t,Q'(a))
\]
for a nonzero vector $a$. By \eqref{eqn_SNEFCHHpsiDM}, $\cM$ is a $|T|q^{d/2-b}$-ovoid if and only if
\begin{equation}\label{eqn_SNEFCQWKloos}
	\sum_{t\in\mathcal{T}}K(\psi_{\F_{q^b}},t,u)
	=\left\{\begin{array}{ll}q^{b}-|\mathcal{T}|, & \text{if }u\in\mathcal{T},\\
		-|\mathcal{T}|, &  \text{if }u\in\F_{q^b}\setminus \mathcal{T}.\end{array}\right.
\end{equation}
It holds trivially when $u=0$. In the case $b=2$, we have the following theorem by exactly the same argument as in the proof of Theorem \ref{thm_II_HtoWcons}.
\begin{thm}\label{thm_II_Wqevenb2cons}
	Take notation as above, and suppose that $p=2$ and $b=2$. Let $\cM$ be as in \eqref{eqn_II_Mdef} for an $\F_q^*$-invariant subset $\mathcal{T}$ of $\F_{q^2}^*$. Then $\cM$ is a $\frac{|\mathcal{T}|}{q-1}q^{d/2-2}$-ovoid of $W(d-1,q)$ if and only if $\mathcal{T}=\{x^{-1}:\,x\in\mathcal{T}\}$.
\end{thm}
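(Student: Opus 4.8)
The plan is to translate the defining property of $\cM$ into the Kloosterman-sum condition \eqref{eqn_SNEFCQWKloos}, which the surrounding text has already shown is the correct necessary and sufficient condition for $\cM$ to be a $\frac{|\mathcal{T}|}{q-1}q^{d/2-2}$-ovoid here (the computation $\sum_{x\in V'}\psi_{\F_{q^b}}(\lambda Q'(x))=-q^{d/2}$ from Proposition \ref{prop_Ellip_Tbound} being valid for $q$ even, as remarked after that proof), and then to evaluate the left-hand side of \eqref{eqn_SNEFCQWKloos} explicitly, exactly as in the proof of Theorem \ref{thm_II_HtoWcons}(2). Write $E:=\F_{q^2}$ and $\tr:=\tr_{q^2/q}$; since $q$ is even, $\tr(w)=0$ if and only if $w\in\F_q$. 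Fix a set $T$ of $\F_q^*$-coset representatives with $\mathcal{T}=\F_q^*\cdot T$, so that $|\mathcal{T}|=(q-1)|T|$.

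First I would compute, for a fixed $u\in E^*$,
\[
\sum_{t\in\mathcal{T}}K(\psi_E,t,u)=\sum_{t\in T}\sum_{y\in\F_q^*}\sum_{z\in E^*}\psi_E(uz^{-1})\,\psi_{\F_q}\!\big(y\,\tr(tz)\big),
\]
using $\psi_E(ytz)=\psi_{\F_q}(y\,\tr(tz))$ because $y\in\F_q$. The sum over $y\in\F_q^*$ equals $q\,[[tz\in\F_q]]-1$; the contribution of the $-1$ term is $-\sum_{t\in T}\sum_{z\in E^*}\psi_E(uz^{-1})=|T|$ since $u\neq 0$, and in the contribution of the $[[tz\in\F_q]]$ term one substitutes $z=t^{-1}c$ with $c\in\F_q^*$ and collapses the sum over $c$ to obtain $q^2\,\#\{t\in T:\,ut\in\F_q^*\}-q|T|$. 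Hence
\[
\sum_{t\in\mathcal{T}}K(\psi_E,t,u)=q^2\,\#\{t\in T:\,ut\in\F_q^*\}-q|T|+|T|.
\]

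Finally I would match this against the right-hand side of \eqref{eqn_SNEFCQWKloos}. Since $|\mathcal{T}|=(q-1)|T|=q|T|-|T|$, the terms $-q|T|+|T|$ cancel the $-|\mathcal{T}|$ occurring on both branches, so the condition reduces to $\#\{t\in T:\,ut\in\F_q^*\}=1$ when $u\in\mathcal{T}$ and $=0$ when $u\in E^*\setminus\mathcal{T}$ (the case $u=0$ being trivial). Because $T$ meets each $\F_q^*$-coset of $E^*$ exactly once, the integer $\#\{t\in T:\,ut\in\F_q^*\}$ equals $1$ precisely when the coset $\F_q^*u^{-1}$ lies in $\mathcal{T}$, i.e.\ $u^{-1}\in\mathcal{T}$, and equals $0$ otherwise. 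Therefore \eqref{eqn_SNEFCQWKloos} holds for every $u$ if and only if $u\in\mathcal{T}\Leftrightarrow u^{-1}\in\mathcal{T}$ for all $u\in E^*$, i.e.\ $\mathcal{T}=\{x^{-1}:x\in\mathcal{T}\}$. The whole argument is routine character-sum bookkeeping identical in spirit to the proof of Theorem \ref{thm_II_HtoWcons}(2); the only point needing care — already settled in the text — is that \eqref{eqn_SNEFCQWKloos} really is the right criterion in even characteristic, so there is no genuine obstacle.
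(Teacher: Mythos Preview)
Your proposal is correct and follows exactly the same route as the paper, which simply defers to the computation in the proof of Theorem~\ref{thm_II_HtoWcons}(2): expand the Kloosterman sums over $\mathcal{T}=\F_q^*\cdot T$, use that $\tr_{q^2/q}(w)=0\Leftrightarrow w\in\F_q$ in characteristic $2$, and reduce \eqref{eqn_SNEFCQWKloos} to the condition $\#\{t\in T:\,ut\in\F_q^*\}=[[u\in\mathcal{T}]]$, i.e.\ $\mathcal{T}=\mathcal{T}^{-1}$. One tiny wording slip: $T$ does not meet \emph{every} $\F_q^*$-coset of $E^*$ exactly once, only those contained in $\mathcal{T}$, but your conclusion from that sentence is the correct one regardless.
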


\begin{remark}\label{rem_II_HtoW1}
	There are indeed transitive $m$-ovoids arising from Theorem \ref{thm_II_Wqevenb2cons}. For instance, if we set $\mathcal{T}=\{\lambda \gamma^{2^i}:\,\,0\le i\le 2f-1,\lambda\in\F_q^*\}$ with any specific $\gamma\in\F_{q^2}^*$,  then the condition is satisfied and we obtain a transitive $\frac{|\mathcal{T}|}{q-1}q^{d/2-2}$-ovoid of $W(d-1,q)$. In particular, if $\gamma$ is primitive, then $|\mathcal{T}|=2f(q-1)$ and we obtain a $2fq^{d/2-2}$-ovoid.
\end{remark}

\begin{example}\label{exa_FR_QMtoW_smallpara}
Take notation as above, and suppose $p=2$. In the case $(b,f,|T|)$ is one of $\{(3,1,3),\,(3,3,9),(4,1,4)\}$, there is an element $\gamma$ of $\F_{q^{b}}$ such that $\mathcal{T}=\{\gamma^{2^i}z:\,0\le i\le bf-1,z\in\F_q^*\}$ has size $bf(q-1)$ and \eqref{eqn_SNEFCQWKloos} holds. In these three cases, $\cM=\{\la v\ra_{\F_q}:\,Q'(x)\in\mathcal{T}\}$ is a $bfq^{d/2-b}$-ovoid of $W(d-1,q)$ if $d/b$ is even and $d/b\ge 4$. They are projectively equivalent to the $m$-ovoids arising from those of $Q^-(d-1,q)$ in Example \ref{exa_FR_QMtoQM_smallpara}, cf. (2) of Remark \ref{rem_2Fact}.
\end{example}
In the case $b\ge 3$, \eqref{eqn_SNEFCQWKloos} gives the same bound on $|T|$ as in Proposition \ref{prop_Ellip_Tbound}, i.e., $|T|\ge\frac{ q^b}{(q-1)(2q^{b/2}+1)}$. Set $y:=bf$. Together with the fact that $|T|$ divides $bf$, we deduce that
\[
2^y=q^b\le bf(2p^{bf/2}+1)(q-1)\le y(2p^{y/2}+1)(p^{y/3}-1)<2y\cdot 2^{5y/6}.
\]
It follows that $2^{y/6}<2y$, and so $y\le 37$. There are $24$ feasible $(b,f)$ tuples and correspondingly $39$ feasible $(b,f,|T|)$ tuples with $b\ge 3$. Among them there are $15$ tuples with $b>3$, and we check with Magma that there are transitive $m$-ovoids only in the case $(p,b,f,|T|)=(2,4,1,4)$. For the $24$ remaining tuples with $b=3$, the only ones that satisfy the bound on $|T|$ in Theorem \ref{thm_II_new} are  $(p,b,f,|T|)=(2,3,1,3)$, $(2,3,3,9)$.  The resulting transitive $m$-ovoids are exactly those in Example \ref{exa_FR_QMtoW_smallpara} by computer search. \medskip

From now on, suppose that we are in the case (b). In this case, $d/b$ is odd. First consider the case $d=b$. The case $d=4$ leads to Theorem \ref{thm_PGamSpTransi} (b), cf. Remark \ref{rem_end_sec4} (2). Suppose that $d\ge 6$ in the sequel.  By the same argument as in the case $d=b$ in the case $(b)$ of Section \ref{subsec_II_QM}, we obtain the same list of $35$ feasible tuples. By the same search strategy as therein, we use Magma to exclude all the cases except for the tuples $(2,6,1,3),\, (2,6,3,9)$ and $(2,6,8,24),(2,6,9,27)$. For the first two tuples, the only transitive $m$-ovoids are $Q^-(d-1,q)$ which are covered by Example \ref{exa_Classical}. The remaining two tuples are beyond our search capacity.

We next consider the case where $d/b\ge 3$. By Lemma \ref{lem_Kirr} and Lemma \ref{lem_K_primdiv}, $K$ is irreducible on $V'$ and has order divisible by $\Phi_{df}^*(p)$, and it is not a extension field subgroup by the choice of $b$.  We apply \cite[Theorem 4.1]{bamberg2008overgroups} to $K\le\textup{GL}(V')$ and see that either $K^{(\infty)}=\textup{SU}_{d/b}(q^{b/2})$ or it is one of the cases in Table \ref{tab_Thm4.1Spor}.
We check with Magma that there are only two examples for the cases in Table \ref{tab_Thm4.1Spor}:
\begin{enumerate}
	\item[(1)] a transitive $6$-ovoid of $W(5,5)$ stabilized by $3^\cdot A_7$,
	\item[(2)] a transitive $4$-ovoid of $W(5,3)$ stabilized by $2^{\cdot}\PSL_2(7)$.
\end{enumerate}
They are cases (c1) and (c2) of Theorem \ref{thm_PGamSpTransi}. We then consider the case $K^{(\infty)}=\textup{SU}_{d/b}(q^{b/2})$. In this case, $H^{(\infty)}=\textup{SU}_{d/b}(q^{b/2})$.  The group $\textup{SU}_{d/b}(q^{b/2})$ is transitive on nonzero singular vectors of $V'$ that has the same $H'$-value. In particular, $\cM_1=\{\la v\ra_{\F_q}:\,H'(v)=0\}$ forms an orbit, and it is a $\frac{q^{d/2-b}-1}{q-1}$-ovoid of $W(d-1,q)$ by Example \ref{exa_Extension} and \cite{KellyCons}. We now consider the case $\cM\subseteq\cP\setminus\cM_1$. Then $\cM$ takes the same form as in Section \ref{subsec_HtoW_I}. Take the same notation therein, and set $\epsilon=1/2$ or $1$ according as $q$ is odd or even. By Lemma \ref{lem_HtoW_Tbound}, $|T|$ divides $bf/2$ or $bf$ according as $q$ is even or odd, and $\mathcal{T}$ is $\F_q^*$-invariant if $q$ is odd and $|T|$ does not divide $\frac{1}{2}bf$. If $\cM$ is an $m$-ovoid with $m=\epsilon|T|q^{(d-b)/2}$,  then $|T|\ge\frac{q^{b/2}}{\epsilon(q-1)(2q^{b/4}+1)}$ by Proposition \ref{subsec_HtoW_I}. This gives the same inequality as in \eqref{eqn_II_HtoQM_ine}. The bound is trivial for $b=2,4$. We split into three cases.
\begin{enumerate}
	\item[(i)] In the case $b=2$, we must have $\cM=\cP\setminus\cM_1$ by the discussion in Section \ref{subsec_HtoW_I}, and it is indeed a transitive $q^{(d-b)/2}$-ovoid. In the case $b=4$ and $q$ is even, we are in the situation of Theorem \ref{thm_II_HtoWcons} and there are transitive examples as in Remark \ref{rem_II_HtoW1}.
	\item[(ii)] Consider the case where $b=4$ and $q$ is odd. Since $H$ is a subgroup of $\Gamma^\#(V',\kappa')$, cf. \eqref{eqn_GamV'def}, we have $\lambda(g)\in\F_q^*$ for each $g\in H$. The induced action of $H$ on $\{\square\cdot t:\,t\in T\}$ is transitive and elements of $\F_q^*$ are all squares of $\F_{q^2}^*$, so the elements of $\mathcal{T}=\square\cdot T$ are all squares or nonsquares of $\F_{q^2}^*$. This case is excluded by Lemma \ref{lem_II_HtoW_qodd}.
	\item[(iii)]In the case $b\ge 6$, we obtain the same list of feasible $(p,b,f, |T|)$ tuples as in the last paragraph of Section \ref{subsec_II_Hdood}. When $p$ is odd, there are $6$ out of $31$ tuples such that $|T|$ divides $\frac{bf}{2}$:
$	( 3, 6, 3, 9 )$, $( 3, 6, 2, 6 )$, $( 3, 6, 4, 12 )$, $( 5, 6, 1, 3 )$,
$( 3, 6, 1, 3 )$, $( 5, 6, 2, 6 )$. We check with Magma that there are no transitive $m$-ovoids in such cases. For the remaining $25$ tuples with $q$ odd, $|T|$ does not divide $\frac{bf}{2}$ and so is  $\F_q^*$-invariant by Lemma \ref{lem_HtoW_Tbound}. After applying the bound on $|T|$ in Theorem \ref{thm_II_new}, there are $5$ tuples that remain: $(3,6,1,6), (3,8,1,8), (5,8,1,8), (3,8,2,16), (3,10,1,10)$. Similarly, there are $18$ out of $39$ tuples that satisfy the bound on $|T|$ in Theorem \ref{thm_II_new} when $q$ is even. We use Magma to search for possible $T$'s that satisfy the necessary and sufficient condition \ref{eqn_II_psiDcond_HtoW} for each surviving tuple. The resulting transitive $m$-ovoids are those in Example \ref{exa_FR_HtoW_smallpara}.
\end{enumerate}

This concludes our discussion in the extension field case. The main results in Theorems \ref{thm_PGamUTransi}-\ref{thm_PGamSpTransi} now follow by summarizing all the cases in Section \ref{sec:mainresults} and Section  \ref{sec_extField}.


\section{Concluding Remarks}\label{sec:Conclud}
In this paper, we classify the $m$-ovoids that admit a transitive automorphism group that acts irreducibly on the underlying vector space. We follow the line of \cite[Theorem 3.1]{bamberg2008overgroups} which classifies the subgroups of finite classical groups whose orders are divisible by certain primitive divisors based on \cite{Guralnick1999Linear}. The main results are presented in Theorems \ref{thm_PGamUTransi}-\ref{thm_PGamSpTransi}, and the transitive $m$-ovoids are summarized in the Tables \ref{tab_sumHerm}-\ref{tab_sumW} and Remark \ref{rem_end_sec4}. We obtain many interesting sporadic examples and unexpected new infinite families with large stabilizers. In particular, we develop a theory that utilizes exponential sum calculations to study $m$-ovoids of a specific form in Section \ref{sec:boundsection}, which generalizes the field reduction approach \cite{KellyCons}.

We conclude this paper with some open problems:
\begin{enumerate}
\item[(1)] It will be of particular interest to further study the $m$-ovoids in Section \ref{sec:boundsection}. We are only concerned with the transitive ones in this paper, but it is very likely that there will be more examples if we drop the transitivity assumption. Also, it is desirable to develop a similar theory for $i$-tight sets and obtain new infinite families.
\item[(2)] As stated in Remark \ref{rem_end_sec4} (2), we did not attempt to classify the transitive $m$-ovoids of $W(3,q)$ whose stabilizers lie in $\GamU^\#_1(q^2)$. There should be more such $m$-ovoids, as is indicated by the examples in \cite{cossidente2008m}. We leave this as an open problem, which we expect to be complicated and difficult. We did not consider the transitive $m$-ovoids of $Q^+(3,q)$ either for lack of geometric significance. By (3) of the same remark, there are two parameter sets in $W(5,q)$ that are left undecided.
\item[(3)] It is a natural next step to classify the transitive $m$-ovoids whose automorphism group is reducible on the ambient vector space. However, it will be considerably more complicated than the irreducible case, as is indicated by the classification of transitive ovoids in the Hermitian case \cite{FengLi}.
\end{enumerate}


\section*{Acknowledgements}{This work was supported by National Natural Science Foundation of China (Grant No. 12171428), the Sino-German Mobility Programme M-0157 and Shandong Provincial Natural Science Foundation (Grant No. ZR2022QA069). The authors are grateful to the reviewers for their detailed comments and suggestions that helped to improve the presentation of the paper greatly.}





\begin{thebibliography}{10}
	
	\bibitem{AschbacherMaximal}
	Aschbacher~M.
	\newblock On the maximal subgroups of the finite classical groups.
	\newblock {Invent Math}, 1984, 76(3):469--514
	
	\bibitem{BaarnhielmRecognising}
	B\"{a}\"{a}rnhielm~H.
	\newblock Recognising the small {R}ee groups in their natural representations.
	\newblock {J Algebra}, 2014, 416:139--166
	
	\bibitem{BambergTightsets2007}
	Bamberg~J, Kelly~S, Law~M, et~al.
	\newblock Tight sets and {$m$}-ovoids of finite polar spaces.
	\newblock {J Combin Theory Ser A}, 2007, 114(7):1293--1314
	
	\bibitem{BambergTightquadrangles}
	Bamberg~J, Law~M, Penttila~T.
	\newblock Tight sets and {$m$}-ovoids of generalised quadrangles.
	\newblock {Combinatorica}, 2009, 29(1):1--17
	
	\bibitem{Bamberg2009Classificationovoid}
	Bamberg~J, Penttila~T.
	\newblock Transitive eggs.
	\newblock {Innov Incidence Geom}, 2006, 4:1--12
	
	\bibitem{bamberg2008overgroups}
	Bamberg~J, Penttila~T.
	\newblock Overgroups of cyclic sylow subgroups of linear groups.
	\newblock {Communications in Algebra}, 2008, 36(7):2503--2543
	
	\bibitem{BambergclassificationTransi}
	Bamberg~J, Penttila~T.
	\newblock A classification of transitive ovoids, spreads, and {$m$}-systems of
	polar spaces.
	\newblock {Forum Math.}, 2009, 21(2):181--216
	
	\bibitem{Magma}
	Bosma~W, Cannon~J, Fieker~C, et~al.
	\newblock Handbook of magma functions.
	\newblock {Sydney}, 2013
	
	\bibitem{bray2013maximal}
	Bray~J~N, Holt~D~F, Roney-Dougal~C~M.
	\newblock {The maximal subgroups of the low-dimensional finite classical
		groups}.
	\newblock Cambridge:~Cambridge university press, 2013

    \bibitem{Brouwer1990}
	Brouwer~A~E, Wilbrink~H~A.
	\newblock Ovoids and fans in the generalized quadrangle {$Q(4,2)$}.
	\newblock {Geom. Dedicata}, 1990, 36(1):121--124
	
    \bibitem{Ceria2022}
	Ceria~M, Pavese~F.
	\newblock The $m$-ovoids of $W(5,2)$ and their generalizations.
    \newblock {Finite Fields Appl.}, 2022, 84: 102098
	
	\bibitem{Atlas}
	Conway~J, Curtis~R., Norton~S, Parker~R, and Wilson~R.
	\newblock {ATLAS of Finite Groups}.
	\newblock Eynsham:~Oxford University Press, 1985
	
	\bibitem{cooperstein1981maximal}
	Cooperstein~B~N.
	\newblock Maximal subgroups of $G_{2}(2^n)$.
	\newblock {Journal of Algebra}, 1981, 70(1):23--36
	
	\bibitem{cossidente2008m}
	Cossidente~A, Culbert~C, Ebert~G~L, et al.
	\newblock On {$m$}-ovoids of {$W_3(q)$}.
	\newblock {Finite Fields Appl}, 2008, 14(1):76--84
	
	
	\bibitem{Cossidente2018intri}
	Cossidente~A, Pavese~F.
	\newblock On intriguing sets of finite symplectic spaces.
	\newblock {Des Codes Cryptogr}, 2018, 86(5):1161--1174
	
	\bibitem{weilCoulter}
	Coulter~R~S.
	\newblock Further evaluations of {W}eil sums.
	\newblock {Acta Arith}, 1998, 86(3):217--226
	
	\bibitem{curtis1987methods}
	Curtis~C~W, Reiner~I.
	\newblock {Methods of Representation Theory with applications to finite
		groups and orders, Vol II}.
	\newblock New York: John Wiley \& Sons, Inc, 1987
	
   \bibitem{DeBruyn2020}
	De~Bruyn~B, Gao~M.
	\newblock {The homogeneous pseudo-embeddings and hyperovals of the
		generalized quadrangle {$H(3,4)$}}.
	\newblock {Linear Algebra Appl}, 2020, 594: 90--115.

	\bibitem{Dye1977partitions}
	Dye~R~H.
	\newblock {Partitions and their stabilizers for line complexes and quadrics}.
	\newblock {Ann Mat Pura Appl (4)}, 1977, 114: 173--194
	
	
	\bibitem{FengLi}
	Feng~T, Li~W~C.
	\newblock On transitive ovoids of finite hermitian polar spaces.
	\newblock {Combinatorica}, 2021, 41:645--667


    \bibitem{FengLT}
	Feng~T, Li~W~C, Tao~R.
	\newblock{$\PSU_3(q)$-invariant intriguing sets of orthogonal polar space $Q^+(7, q)$ (in Chinese).}
	\newblock{Sci Sin Math}, 2022, doi: 10.1360/SSM-2022-0071
	

	\bibitem{FengOnm2020}
	Feng~T, Wang~Y, Xiang~Q.
	\newblock On {$m$}-ovoids of symplectic polar spaces.
	\newblock {J Combin Theory Ser A}, 2020, 175:105279, 14

      \bibitem{Gavrilyuk2021}
    Gavrilyuk~A~L, Metsch~K, Pavese~F.
    \newblock A modular equality for $m$-ovoids of elliptic equadrics
    \newblock {ArXiv:2111.07350} \url{https://arxiv.org/abs/2111.07350}, 2021

	
	\bibitem{gill2006polar}
	Gill~N.
	\newblock Polar spaces and embeddings of classical groups.
	\newblock {New Zealand J Math}, 2007, 36:175--184
	
	
	\bibitem{giudici2020subgroups}
	Giudici~M, Glasby~S~P, Praeger~C~E.
	\newblock Subgroups of classical groups that are transitive on subspaces.
	\newblock {ArXiv:2012.07213},  \url{https://arxiv.org/abs/2012.07213}, 2020
	
	
	\bibitem{Grove2002class}
	Grove~L~C.
	\newblock {Classical groups and geometric algebra}.
	\newblock Amer Math Soc, 2002
	
	\bibitem{Guralnick1999Linear}
	Guralnick~R, Penttila~T, Praeger~C~E, et al.
	\newblock Linear groups with orders having certain large prime divisors.
	\newblock {Proc London Math Soc (3)}, 1999, 78(1):167--214
	
	\bibitem{Hissrepreofquasi2001}
	Hiss~G, Malle~G.
	\newblock Low-dimensional representations of quasi-simple groups.
	\newblock {LMS J Comput Math}, 2001, 4:22--63
	
	\bibitem{KantorOvoids}
	Kantor~W~M.
	\newblock Ovoids and translation planes.
	\newblock {Canadian J Math}, 1982, 34(5):1195--1207
	
	\bibitem{KellyCons}
	Kelly~S.
	\newblock Constructions of intriguing sets of polar spaces from field reduction
	and derivation.
	\newblock {Des Codes Cryptogr}, 2007, 43(1):1--8.
	
	\bibitem{KemperMatrixgenerators}
	Kemper~G, L\"{u}beck~F, Magaard~K.
	\newblock Matrix generators for the {R}ee groups {${}^2G_2(q)$}.
	\newblock {Comm Algebra}, 2001, 29(1):407--413
	
	\bibitem{Kingsubgroup2005}
	King~O~H.
	\newblock The subgroup structure of finite classical groups in terms of
	geometric configurations.
	\newblock {London Math Soc Lecture Note Ser}, 2005, 327:29--56
	
	\bibitem{kleidman1990subgroup}
	Kleidman~P~B, Liebeck~M.
	\newblock {The subgroup structure of the finite classical groups}.
	\newblock Cambridge:~Cambridge University Press, 1990
	
	\bibitem{KleidmanThemaximal8}
	Kleidman~P~B.
	\newblock The maximal subgroups of the finite {$8$}-dimensional orthogonal
	groups {$P\Omega^+_8(q)$} and of their automorphism groups.
	\newblock {J Algebra}, 1987, 110(1):173--242
	

	\bibitem{LidlFF}
	Lidl~R, Niederreiter~H.
	\newblock {Finite fields}.
	\newblock Cambridge:~Cambridge University Press, second edition, 1997

    \bibitem{LiebeckRank3}
	Liebeck~M~W.
	\newblock The affine permutation groups of rank three.
	\newblock {Proc London Math Soc (3)}, 1987, 54(3):477--516
	
	\bibitem{Lindsey1970Onsix}
	Lindsey~J~H~II.
	\newblock On a six dimensional projective representation of the {H}all-{J}anko
	oroup.
	\newblock {Pacific J Math}, 1970, 35:175--186
	
	\bibitem{LubeckSmalldegree}
	L\"{u}beck~F.
	\newblock Small degree representations of finite {C}hevalley groups in defining
	characteristic.
	\newblock {LMS J Comput Math}, 2001, 4:135--169
	
	\bibitem{LuneburgTranslation}
	L\"{u}neburg~H.
	\newblock {Translation planes}.
	\newblock Berlin-New York:~Springer-Verlag, 1980
	
	\bibitem{OstromProjective1959}
	Ostrom~T~G, Wagner~A.
	\newblock On projective and affine planes with transitive collineation groups.
	\newblock {Math Z}, 1959, 71:186--199
	
	\bibitem{SegreCaps}
	Segre~B.
	\newblock On complete caps and ovaloids in three-dimensional {G}alois spaces of
	characteristic two.
	\newblock {Acta Arith}, 1959, 5:315--332
	
	\bibitem{Shultmsystem}
	Shult~E~E, Thas~J~A.
	\newblock {$m$}-systems of polar spaces.
	\newblock {J Combin Theory Ser A}, 1994, 68(1):184--204


    \bibitem{SuzukiPNAS}
	Suzuki~M.
	\newblock A new type of simple groups of finite order
	\newblock {Proc Nat Acad Sci USA}, 1960, 46:868--870

	
	\bibitem{wilson2009finite}
	Wilson~R.
	\newblock {The finite simple groups}.
	\newblock Springer Science, 2009
	
	\bibitem{zsigmondy1892theorie}
	Zsigmondy~K.
	\newblock Zur theorie der potenzreste.
	\newblock {Monatshefte f{\"u}r Mathematik und Physik}, 1892, 3(1):265--284
	
\end{thebibliography}
\end{document}